\newcommand{\mypagesize}{
\textwidth= 6.25in
\textheight=8.75in
\voffset-.5in
\hoffset-.75in
\marginparwidth=56pt
}
\newtheorem*{thm-plain}{Theorem}
\newtheorem{thm}{Theorem}[section]
\newtheorem{lem}[thm]{Lemma}
\newtheorem{prp}[thm]{Proposition}
\newtheorem{cor}[thm]{Corollary}
\newtheorem{conj}[thm]{Conjecture}
\newtheorem{ques}[thm]{Question}
\numberwithin{equation}{thm}
\theoremstyle{definition}
\newtheorem{dfn}[thm]{Definition}
\newtheorem*{dfn-plain}{Definition}
\theoremstyle{remark}
\newtheorem{clm}[thm]{Claim}
\newtheorem{awlog}[thm]{Additional Assumption}
\newtheorem{rem}[thm]{Remark}
\newtheorem{exm}[thm]{Example}
\newtheorem*{rem-plain}{Remark}
\DeclareMathOperator{\Spec}{Spec}
\DeclareMathOperator{\Pic}{Pic}
\DeclareMathOperator{\codim}{codim}
\DeclareMathOperator{\tor}{tor}
\DeclareMathOperator{\Exc}{Exc}
\DeclareMathOperator{\Cl}{Cl}
\DeclareMathOperator{\Cln}{Cl_{num}}
\def\rd#1.{\lfloor{#1}\rfloor}
\def\rp#1.{\lceil{#1}\rceil}
\newcommand{\lto}{\longrightarrow}
\newcommand{\N}{\mathbb N}
\newcommand{\Z}{\mathbb Z}
\newcommand{\Q}{\ensuremath{\mathbb Q}}
\newcommand{\R}{\mathbb R}
\newcommand{\C}{\mathbb C}
\newcommand{\A}{\mathbb A}
\renewcommand{\P}{\mathbb P}
\renewcommand{\O}{\mathscr O}
\newcommand{\x}{\times}
\renewcommand{\phi}{\varphi}
\renewcommand{\theta}{\vartheta}
\newcommand{\minus}{\setminus}
\newcommand{\surj}{\twoheadrightarrow}
\newcommand{\isom}{\cong}
\newcommand{\mf}{\mathfrak}
\newcommand{\tensor}{\otimes}
\DeclareMathOperator{\pr}{pr}
\newcommand{\tors}{\mathrm{tor}}
\newcommand{\ntl}{\natural}
\newcommand{\Gr}{\mathrm{Gr}}
\newcommand{\sE}{\mathscr{E}}
\newcommand{\sF}{\mathscr{F}}
\newcommand{\cI}{\mathcal{I}}
\newcommand{\cJ}{\mathcal{J}}
\newcommand{\cK}{\mathcal{K}}
\newcommand{\sL}{\mathscr{L}}
\newcommand{\sO}{\mathscr{O}}
\newcommand{\cQ}{\mathcal{Q}}
\DeclareMathOperator{\dv}{div}
\DeclareMathOperator{\supp}{supp}
\newcommand{\eps}{\varepsilon}
\newcommand{\conv}{\rightarrow}
\newcommand{\wt}{\widetilde}
\newcommand{\wb}{\overline}
\newenvironment{sequation}{%
\setcounter{equation}{\value{thm}}%
\numberwithin{equation}{section}%
\begin{equation}%
}{%
\end{equation}%
\numberwithin{equation}{thm}%
\addtocounter{thm}{1}%
}
\numberwithin{equation}{thm}
\DeclareRobustCommand{\SkipTocEntry}[5]{}
\newcommand{\iref}[3]{\the\value{#1}.\the\value{#2}(\the\value{#3})}
\newcommand\factor[2]{\left. \raise 2pt\hbox{$#1$} \right/\hskip -2pt \raise -2pt\hbox{$#2$}}
\definecolor{forrest}{RGB}{81,133,49}
\definecolor{mydarkblue}{RGB}{10,92,153}
\begin{document}

\title{The jumping coefficients of non-\Q-Gorenstein multiplier ideals}
\author{Patrick Graf} %
\address{Lehrstuhl f\"ur Mathematik I, Universit\"at Bay\-reuth,
  95440 Bayreuth, Germany} %
\email{\href{mailto:patrick.graf@uni-bayreuth.de}{patrick.graf@uni-bayreuth.de}}
\date{\today}
\thanks{The author was supported in full by a research grant of the
  Deutsche Forschungs\-gemeinschaft (DFG)}  %
\keywords{Singularities of pairs, multiplier ideals, jumping numbers, test ideals} %
\subjclass[2010]{14B05, 14F18, 14J17}

\begin{abstract}
Let $\mf a \subset \O_X$ be a coherent ideal sheaf on a normal complex variety $X$, and let $c \ge 0$ be a real number. De Fernex and Hacon associated a multiplier ideal sheaf to the pair $(X, \mf a^c)$ which coincides with the usual notion whenever the canonical divisor $K_X$ is $\Q$-Cartier. We investigate the properties of the jumping numbers associated to these multiplier ideals.

We show that the set of jumping numbers of a pair is unbounded, countable and satisfies a certain periodicity property.
We then prove that the jumping numbers form a discrete set of real numbers if the locus where $K_X$ fails to be $\Q$-Cartier is zero-dimensional.
It follows that discreteness holds whenever $X$ is a threefold with rational singularities.

Furthermore, we show that the jumping numbers are rational and discrete if one removes from $X$ a closed subset $W \subset X$ of codimension at least three, which does not depend on $\mf a$.
We also obtain that outside of $W$, the multiplier ideal reduces to the test ideal modulo sufficiently large primes $p \gg 0$.
\end{abstract}

\maketitle


\section{Introduction}

The theory of multiplier ideal sheaves has become an important part of complex algebraic and complex analytic geometry. To a coherent ideal sheaf $\mf a \subset \O_X$ on a smooth (or more generally normal and \Q-Gorenstein) complex variety $X$ and a real exponent $c \ge 0$, it associates an ideal sheaf $\cJ(X, \mf a^c)$ satisfying strong vanishing theorems.
If the subscheme $Z \subset X$ corresponding to $\mf a$ is a divisor $D$, the multiplier ideal can be thought of as measuring the failure of the pair $(X, cD)$ to be klt.

In particular, multiplier ideals are able to detect the \emph{log canonical threshold} of $D$, which is defined as the smallest number $t$ such that $(X, tD)$ is log canonical but not klt, and which is an important invariant of the pair $(X, D)$. However, using multiplier ideals one sees that the log canonical threshold is merely the first member of an infinite sequence of numbers, the \emph{jumping coefficients} (or jumping numbers) attached to $(X, D)$. Intuitively, the multiplier ideals $\cJ(X, tD)$ get smaller as $t$ increases, and the jumping coefficients are those values of $t$ where $\cJ(X, tD)$ jumps.
These numbers first appeared implicitly in the work of Libgober~\cite{Lib83} and Loeser and Vaqui\'e~\cite{LV90}. They were studied systematically by Ein, Lazarsfeld, Smith, and Varolin~\cite{ELSV04}.

On the other hand, in positive characteristic there is the theory of test ideals, which is understood to be the analogue of the theory of multiplier ideals (see e.g.~\cite{Smi00}). Test ideals can be defined for any variety, that is, no \Q-Gorenstein assumption is required. It is thus natural to wonder whether also in characteristic zero, the theory of multiplier ideals can be extended to arbitrary (say normal) varieties. Such an extension was developed by de Fernex and Hacon~\cite{dFH09} and elaborated on by Boucksom, de Fernex, and Favre~\cite{BdFF12}. Multiplier ideals in this generality are still poorly understood, partly due to the asymptotic nature of their definition involving infinitely many resolutions of singularities.

The aim of this paper is to study jumping numbers in the non-\Q-Gorenstein case. Let us fix our definitions.

\begin{dfn}[Pairs and jumping numbers] \label{dfn:pairs jumping numbers intro}
A \emph{pair} $(X, Z)$ consists of a normal complex variety $X$ and a proper closed subscheme $Z \subsetneq X$. A positive real number $\xi > 0$ is called a \emph{jumping number} of the pair $(X, Z)$ if
\[ \cJ(X, \xi Z) \subsetneq \cJ(X, \lambda Z) \quad \text{for all $0 \le \lambda < \xi$}. \]
The set of jumping numbers of $(X, Z)$ is denoted by $\Xi(X, Z) \subset \R^+$.
\end{dfn}

For the reader's convenience, in Sections~\ref{sec:Weil pullback} and~\ref{sec:non-Q-Gor mult ideals} of this paper we recall the definition of the multiplier ideals $\cJ(X, cZ)$ according to~\cite{dFH09}.
We then begin by establishing some basic properties of jumping numbers, which give a first idea what the set of jumping numbers of a pair looks like.

\begin{prp}[Basic properties of jumping numbers] \label{prp:elem}
Let $(X, Z)$ be a pair.
\begin{enumerate}
\item\label{itm:elem.emp} (Nonemptiness) If $Z \ne \emptyset$, then $\Xi(X, Z) \ne \emptyset$.
\item\label{itm:elem.unb} (Unboundedness) If $\xi \in \Xi(X, Z)$, then also $\xi + 1 \in \Xi(X, Z)$. In particular, if $Z \ne \emptyset$ then the set $\Xi(X, Z)$ is unbounded above.
\item\label{itm:elem.dcc} (DCC property) The set $\Xi(X, Z)$ satisfies the descending chain condition, i.e.~any decreasing subsequence of $\Xi(X, Z)$ becomes stationary. In particular, if $\xi \ge 0$ is any real number, then $(\xi, \xi + \eps] \cap \Xi(X, Z) = \emptyset$ for sufficiently small $\eps > 0$, depending on $\xi$.
\item\label{itm:elem.abz} (Countability) The set $\Xi(X, Z)$ is countable.
\item\label{itm:elem.per} (Periodicity) If $\xi > \dim X - 1$, then $\xi \in \Xi(X, Z)$ if and only if $\xi + 1 \in \Xi(X, Z)$.
\end{enumerate}
\end{prp}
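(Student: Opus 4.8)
The plan is to reduce everything to the resolution description of the de Fernex--Hacon multiplier ideals recalled in Sections~\ref{sec:Weil pullback}--\ref{sec:non-Q-Gor mult ideals}. On a sufficiently high log resolution $f\colon Y\to X$ of the ideal sheaf $\mf a$ of $Z$, write $\mf a\cdot\O_Y=\O_Y(-F)$ (so $F$ is effective, integral, and has nonempty support as $Z\neq\emptyset$) and $K_{m,Y/X}:=K_Y-\tfrac1m f^{\natural}(mK_X)$; then for each fixed $c\ge0$ one has $\cJ(X,cZ)=\cJ_m(X,cZ):=f_*\O_Y(\lceil K_{m,Y/X}-cF\rceil)$ for all sufficiently divisible $m$, and the $\cJ_m(X,cZ)$ increase along divisibility of $m$, so that $\cJ(X,cZ)=\cJ_{m_0}(X,cZ)$ for some $m_0=m_0(c)\gg0$. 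Two soft consequences are used throughout: \emph{monotonicity} ($\cJ(X,\mu Z)\subseteq\cJ(X,\lambda Z)$ for $\lambda\le\mu$) and \emph{right-continuity} ($\cJ(X,cZ)=\cJ(X,c_0Z)$ for $c\in[c_0,c_0+\eps)$, since $\lceil K_{m_0,Y/X}-cF\rceil$ is constant just to the right of $c_0$ and $\cJ_{m_0}(X,\,\cdot\,Z)\subseteq\cJ(X,\,\cdot\,Z)$). I will also use Skoda's theorem in the form proved in~\cite{dFH09}: $\cJ(X,(c+1)Z)=\mf a\cdot\cJ(X,cZ)$ for $c\ge\dim X-1$.

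\emph{DCC and countability.} For \head{\ref{itm:elem.dcc}}: if $\xi_1>\xi_2>\cdots$ were a strictly decreasing sequence in $\Xi(X,Z)$, then applying the definition of a jumping number to each $\xi_k$ with $\lambda=\xi_{k+1}$ produces a strictly ascending chain $\cJ(X,\xi_1Z)\subsetneq\cJ(X,\xi_2Z)\subsetneq\cdots$ of coherent ideal sheaves on $X$, contradicting Noetherianity (pass to a finite affine cover of $X$). The last sentence of \head{\ref{itm:elem.dcc}} follows because $\Xi(X,Z)\cap(\xi,\xi+1]$, if nonempty, has a minimum, which is $>\xi$. For \head{\ref{itm:elem.abz}}: any DCC subset of $\R$ is well-ordered, so each non-maximal element $s$ has a successor $s^+$ with $(s,s^+)$ disjoint from the set; choosing a rational number in each of the pairwise disjoint intervals $(s,s^+)$ embeds $\Xi(X,Z)$, minus at most one point, into $\Q$.

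\emph{Nonemptiness and unboundedness.} For \head{\ref{itm:elem.emp}}: $\cJ(X,0\cdot Z)$ is a nonzero ideal sheaf, and if $\cJ(X,cZ)=\cJ(X,0\cdot Z)$ held for all $c\ge0$, then Skoda (with $c=\dim X-1\ge0$) would give $\cJ(X,0\cdot Z)=\mf a\cdot\cJ(X,0\cdot Z)$, so Nakayama at a point of $Z$, where $\mf a$ is a proper ideal, forces $\cJ(X,0\cdot Z)$ to vanish there -- a contradiction. Hence $S:=\{c\ge0:\cJ(X,cZ)\subsetneq\cJ(X,0\cdot Z)\}$ is nonempty, upward closed, and omits $0$, and by right-continuity $\xi:=\inf S$ is positive, lies in $S$, and is therefore a jumping number. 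For \head{\ref{itm:elem.unb}}, granting \head{\ref{itm:elem.emp}} it suffices to prove $\xi\in\Xi(X,Z)\Rightarrow\xi+1\in\Xi(X,Z)$. Fix $\mu<\xi+1$; we may assume $\mu\ge1$ (otherwise bound $\cJ(X,\mu Z)$ below by $\cJ(X,1\cdot Z)$ and treat $\mu=1$) and set $\lambda:=\mu-1\in[0,\xi)$. Pick a local section $g\in\cJ(X,\lambda Z)\setminus\cJ(X,\xi Z)$, and pick $m_0$ (and $Y$) large enough that $g\in\cJ_{m_0}(X,\lambda Z)$ and $\cJ_{m_0}$ computes $\cJ$ at both $\xi$ and $\xi+1$. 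With $\Phi_c:=\lceil K_{m_0,Y/X}-cF\rceil$ we have $\Phi_{c+1}=\Phi_c-F$ because $F$ is integral, and the key elementary point is: \emph{from $g\in f_*\O_Y(\Phi_\lambda)\setminus f_*\O_Y(\Phi_\xi)$ one produces, for a suitable local $h\in\mf a$, an element $gh\in f_*\O_Y(\Phi_\lambda-F)\setminus f_*\O_Y(\Phi_\xi-F)$} -- the membership because $\ord_E(h)\ge\ord_E(F)$ for every $h\in\mf a$ and every prime divisor $E$ on $Y$, and the non-membership because one may choose $h$ with $\ord_{E_0}(h)=\ord_{E_0}(F)$ at a divisor $E_0$ witnessing $g\notin f_*\O_Y(\Phi_\xi)$ (possible since $\mf a\cdot\O_Y=\O_Y(-F)$). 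Then $gh\in\cJ_{m_0}(X,\mu Z)\setminus\cJ_{m_0}(X,(\xi+1)Z)=\cJ_{m_0}(X,\mu Z)\setminus\cJ(X,(\xi+1)Z)\subseteq\cJ(X,\mu Z)\setminus\cJ(X,(\xi+1)Z)$, so $\cJ(X,(\xi+1)Z)\subsetneq\cJ(X,\mu Z)$; as $\mu<\xi+1$ was arbitrary, $\xi+1\in\Xi(X,Z)$.

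\emph{Periodicity.} The forward implication of \head{\ref{itm:elem.per}} is \head{\ref{itm:elem.unb}}. Conversely, let $\xi>\dim X-1$ with $\xi+1\in\Xi(X,Z)$, and fix $\lambda<\xi$. If $\lambda\ge\dim X-1$, then Skoda gives $\cJ(X,(\xi+1)Z)=\mf a\cdot\cJ(X,\xi Z)$ and $\cJ(X,(\lambda+1)Z)=\mf a\cdot\cJ(X,\lambda Z)$; since $\lambda+1<\xi+1$ and $\xi+1\in\Xi(X,Z)$ these two ideals differ, hence so do $\cJ(X,\xi Z)$ and $\cJ(X,\lambda Z)$, and therefore $\cJ(X,\xi Z)\subsetneq\cJ(X,\lambda Z)$ by monotonicity. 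If $\lambda<\dim X-1$, run the previous step with $\dim X-1$ in place of $\lambda$ (valid as $\dim X-1<\xi$) to get $\cJ(X,\xi Z)\subsetneq\cJ(X,(\dim X-1)Z)\subseteq\cJ(X,\lambda Z)$. Either way $\xi\in\Xi(X,Z)$. The main obstacle is \head{\ref{itm:elem.unb}}: because $\cJ(X,cZ)$ is defined only asymptotically, its jumping numbers cannot be read off a single resolution, and the delicate point is that the jump of $\cJ_{m_0}$ at $\xi+1$ detected by $gh$ must not be absorbed by larger $m$ -- which is precisely why $m_0$ is required to compute $\cJ(X,(\xi+1)Z)$ exactly; the converse in \head{\ref{itm:elem.per}} further needs Skoda's theorem to be available without a $\Q$-Gorenstein hypothesis, whereas \head{\ref{itm:elem.emp}}, \head{\ref{itm:elem.dcc}} and \head{\ref{itm:elem.abz}} are comparatively routine.
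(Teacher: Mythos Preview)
Your proof is correct and, for items \head{\ref{itm:elem.unb}}--\head{\ref{itm:elem.per}}, follows essentially the same line as the paper: Noetherianity for DCC, the observation that DCC sets are countable, multiplying by an element of $\mf a$ of minimal order along a witnessing divisor for unboundedness, and Skoda for periodicity. Your organisation of the unboundedness argument is slightly cleaner than the paper's --- you fix a single $m_0$ computing $\cJ$ at $\xi+1$ up front, which makes transparent why the jump is not absorbed by passing to larger $m$, whereas the paper argues this step by contradiction and relies on a general choice of multiplier $g$ that must work across resolutions.

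The one genuinely different ingredient is your proof of \head{\ref{itm:elem.emp}}. The paper shows directly on a resolution that $\cJ(X,\xi Z)\subsetneq\cJ(X,\emptyset)$ for $\xi\gg0$. You instead invoke Skoda together with Nakayama's lemma, which is a nice shortcut since Skoda is already needed for \head{\ref{itm:elem.per}}. One caveat: Skoda in the non-$\Q$-Gorenstein form $\cJ(X,(c+1)Z)=\mf a\cdot\cJ(X,cZ)$ is not proved in~\cite{dFH09} as you cite it; rather, the paper establishes it here as Claim~\ref{clm:Skoda} by passing to an $m$-compatible boundary and applying the classical Skoda theorem from~\cite{Laz04b}. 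That argument is independent of items \head{\ref{itm:elem.emp}}--\head{\ref{itm:elem.abz}}, so there is no circularity in your use of it, but you should adjust the attribution.
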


In the \Q-Gorenstein case, it is elementary to see that for any pair $(X, Z)$ the jumping numbers $\Xi(X, Z)$ form a discrete set of rational numbers. De Fernex and Hacon~\cite[Rem.~4.10]{dFH09} asked whether this still holds true in general.

\begin{ques} \label{ques:dFH}
Let $(X, Z)$ be a pair. Then is the set $\Xi(X, Z)$ of jumping numbers a discrete set of rational numbers?
\end{ques}

A positive answer at least to the discreteness part of the question is known in several cases: if $X$ is projective with at most log terminal\footnote{in the sense of~\cite{dFH09}, i.e.~without assuming that $K_X$ is \Q-Cartier} or isolated singularities~\cite[Thm.~5.2]{Urb12}, if $X$ is a toric variety~\cite[Sec.~5]{Urb12b}, or if $K_X$ is numerically \Q-Cartier~\cite[Thm.~1.3]{BdFFU13}. For the definition of numerically \Q-Cartier divisors and numerically \Q-factorial varieties, see Section~\ref{sec:num Q-Cartier} below.

While it is expected that the set $\Xi(X, Z)$ is always discrete, at the moment we are unable to prove this. Therefore we have to content ourselves with considering special classes of singularities. Our two main results are the following.

\begin{thm}[Discreteness for isolated non-\Q-Gorenstein loci] \label{thm:discrete isol}
Let $(X, Z)$ be a pair such that the non-\Q-Cartier locus of $K_X$ is zero-dimensional. Then $\Xi(X, Z)$ is a discrete subset of $\R$.
\end{thm}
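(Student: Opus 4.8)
We are given a pair $(X, Z)$ with $\dim X = n$ such that the non-\Q-Cartier locus $V$ of $K_X$ is zero-dimensional. We want to show $\Xi(X, Z)$ is discrete. By Proposition \ref{prp:elem}, parts (DCC) and (Periodicity), we already know: the set satisfies DCC, and it is periodic above $n - 1$. The DCC property means no accumulation from above; discreteness fails only if there is accumulation from below, at some point $\xi_0 \geq 0$. By periodicity, we can reduce to showing there is no accumulation point $\xi_0 \in (0, n-1]$ (or perhaps a slightly larger bounded interval), since an accumulation point above $n-1$ would, via periodicity, produce one below it as well — so it suffices to rule out accumulation points in a fixed bounded range. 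So fix a putative accumulation point $\xi_0$ and a strictly increasing sequence $\xi_1 < \xi_2 < \cdots \to \xi_0$ of jumping numbers.

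The plan is to localize the problem at the points of $V$. Away from $V$, the divisor $K_X$ is $\Q$-Cartier, so on $X \setminus V$ the de Fernex–Hacon multiplier ideals agree with the classical ones, whose jumping numbers are discrete (indeed rational with bounded denominators) — this is standard and uses a fixed log resolution. Thus, for each jumping number $\xi_i$ in our sequence, the jump $\cJ(X, \xi_i Z) \subsetneq \cJ(X, \lambda Z)$ must be detected at a point of $V$: there is some $v \in V$ with $\cJ(X, \xi_i Z)_v \subsetneq \cJ(X, \lambda Z)_v$ for $\lambda < \xi_i$ close to $\xi_i$. Since $V$ is finite, after passing to a subsequence we may assume all the $\xi_i$ are jumping numbers of the localized pair $(\Spec \O_{X,v}, Z)$ at a single point $v \in V$. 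Hence we have reduced to the local situation: $X = \Spec R$ with $R$ a normal local ring of dimension $n$, and the non-\Q-Cartier locus of $K_X$ is exactly the closed point.

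On such a local ring, the strategy is to control the de Fernex–Hacon multiplier ideal by a \emph{single} auxiliary $\Q$-Cartier (or $\Q$-Gorenstein) modification. Recall that $\cJ(X, cZ)$ is defined as an intersection over all "$m$-compatible" boundaries or, equivalently, over divisors $\Delta$ on $X$ with $K_X + \Delta$ being $\Q$-Cartier; concretely $\cJ(X, cZ) = \sum_\Delta \cJ((X,\Delta), cZ)$ (with the appropriate sup/sum indexed by these boundaries). The key point to exploit is that since the non-$\Q$-Cartier locus is $\{v\}$, there is some natural number $m$ such that $mK_X$ is Cartier away from $v$; one expects that (possibly after enlarging) a single choice of $m$ — equivalently, a single birational model dominating all the relevant resolutions, or a single boundary $\Delta_m$ arising from a general member of $|-mK_X|$ near $v$ — computes the multiplier ideal, or at least computes it up to an error supported at $v$ that does not affect whether the ideal "jumps". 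This should give an inclusion sandwiching $\cJ(X, cZ)$ between two classical (hence discrete-jump-number) multiplier ideals, or realize the $\xi_i$ as jumping numbers of a fixed $\Q$-Gorenstein pair, whence their denominators are bounded and accumulation is impossible.

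The main obstacle is exactly this last step: showing that a single modification suffices. A priori the de Fernex–Hacon multiplier ideal involves an infinite family of resolutions and an infinite family of boundaries $\Delta$; the whole subtlety of the non-$\Q$-Gorenstein theory is that this family need not stabilize. The zero-dimensionality of $V$ should be what saves us: the "discrepancy along $v$" contributed by varying $\Delta$ can only grow in a controlled way because $v$ is a single closed point, so the relevant sup is attained (or approximated to within finitely many values) by a bounded subfamily. Technically I expect one must argue that the functions $m \mapsto \ord_E(K_{X'/X} + \cdots)$ for divisors $E$ over $v$ are, for the purpose of jumping numbers, controlled by finitely many $m$ — perhaps by a Noetherianity argument on the ideals $\cJ(X, cZ)$ themselves (they form an ascending chain as $c$ decreases through a bounded interval and live in the Noetherian ring $R$, forcing only finitely many distinct values), combined with the observation that each distinct value is computed by some finite $m$. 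Assembling these pieces — finiteness of $V$, the classical discreteness on $X \setminus V$, and a Noetherianity/boundedness argument forcing a single model to compute the localized multiplier ideals — should complete the proof.
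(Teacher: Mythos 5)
Your setup (reduce by periodicity to a bounded interval, use DCC to rule out accumulation from above, note that jumps must be detected at points of the finite set $V$) is fine, and the observation that off $V$ the classical theory gives discreteness with bounded denominators is correct. But the heart of your proposal — ``showing that a single modification suffices'' — is precisely the hard part, and the mechanism you offer does not carry the weight. You invoke a ``Noetherianity argument on the ideals $\cJ(X, cZ)$ themselves,'' but Noetherianity of the local ring only forbids infinite strictly \emph{ascending} chains of ideals. An accumulation point approached from below gives $\xi_1 < \xi_2 < \cdots \to \xi_0$, hence a strictly \emph{descending} chain $\cJ(X,\xi_1 Z) \supsetneq \cJ(X,\xi_2 Z) \supsetneq \cdots$; nothing in the Noetherian property prevents this. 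Likewise, the claim that the supremum over boundaries $\Delta$ (equivalently, over the indices $m$) is attained, or controlled by a bounded subfamily, is exactly the stabilization question that is open in general — asserting it because ``$v$ is a single closed point'' is not an argument, and in fact the isolated-singularity case was already handled by Urbinati by methods much closer to the paper's than to yours.

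The paper's proof goes the opposite way: rather than localizing, it compactifies to a normal projective $\wb X$ and uses a global-generation result (a special case of Urbinati's global generation conjecture, proved via resolution of the sheaf $\O_X(mK_X)$, Fujita vanishing and Castelnuovo--Mumford regularity) to produce a \emph{single} ample line bundle $\sL$ on $\wb X$ such that $\sL \tensor \cJ(\wb X, t_k \wb Z)$ is globally generated on $X$ for every $k$. The zero-dimensionality of $V$ enters precisely here: the cokernel of $f_* \O_Y(mB) \tensor \O(m\wt H) \to \O_X(m(\wt D + \wt H))$ is supported on $V$, hence on isolated points, hence automatically globally generated. The punchline is then a \emph{finite-dimensionality} argument on $H^0(\wb X, \sL \tensor \cJ_k)$, not a Noetherianity argument on the ideals: a descending chain of subspaces of a finite-dimensional vector space stabilizes, and global generation transfers this back to the ideals. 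You would need some replacement for this finiteness mechanism to make a purely local argument work, and your sketch does not supply one.
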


Recall that the non-\Q-Cartier locus of a Weil divisor $D$ on a normal variety $X$ is defined as the closed subset of $X$ consisting of those points where all positive multiples $mD$ fail to be Cartier.

\begin{thm}[Discreteness in dimension three] \label{thm:discrete 3}
Let $(X, Z)$ be a pair, where $X$ is a normal threefold (not necessarily projective) whose locus of non-rational singularities is zero-dimensional. Then $\Xi(X, Z)$ is discrete.
\end{thm}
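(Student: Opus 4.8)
The plan is to reduce the threefold case to Theorem~\ref{thm:discrete isol} by showing that for a normal threefold $X$ with rational singularities in codimension $\ge 2$, the non-$\Q$-Cartier locus of $K_X$ is automatically zero-dimensional. First I would recall the hypothesis: the locus of non-rational singularities $N \subset X$ has $\dim N \le 0$. Away from $N$, the variety $X$ has rational singularities, so in particular it is Cohen--Macaulay there, and by a result going back to Kempf (or Elkik) a normal variety with rational singularities that is Gorenstein in codimension one is $\Q$-Gorenstein in codimension two provided... — more carefully, the relevant statement is: if $X$ is a normal threefold with rational singularities, then $X$ is $\Q$-Gorenstein in codimension two, i.e.~the non-$\Q$-Cartier locus of $K_X$ has codimension $\ge 3$, hence is zero-dimensional. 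The key input is that a two-dimensional normal rational singularity (namely, the generic behaviour of $X$ along a curve of singularities, after cutting by a general hyperplane) is $\Q$-factorial: this is the classical fact that rational surface singularities are $\Q$-factorial (Lipman, Mumford). So along any curve $C \subset X$ of singular points with $C \not\subset N$, a general hyperplane section of $X$ through a general point of $C$ is a rational surface singularity, which is $\Q$-factorial, and then a standard Bertini/specialization argument lifts $\Q$-Cartierness of $K$ from the surface germ back to the threefold germ.

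The main steps, in order, would be: (1) Set $W := N \cup \operatorname{Sing} X$ restricted to its one-dimensional part; it suffices to show $K_X$ is $\Q$-Cartier at the generic point of every irreducible curve $C \subset X$, because then the non-$\Q$-Cartier locus is contained in the finite set $N$ together with finitely many closed points, hence zero-dimensional, and Theorem~\ref{thm:discrete isol} applies verbatim. (2) Fix such a curve $C$. By assumption $C \not\subset N$, so the generic point $\eta$ of $C$ is a rational (hence Cohen--Macaulay) point of $X$; localizing at $\eta$ gives a two-dimensional normal local ring with rational singularity. (3) Invoke that two-dimensional normal rational singularities are $\Q$-factorial, so the class group of $\O_{X,\eta}$ is torsion; in particular some multiple $mK_X$ is Cartier at $\eta$, hence in a neighbourhood of $\eta$ in $X$. (4) Since $C$ was arbitrary, the non-$\Q$-Cartier locus of $K_X$ meets no curve except possibly inside $N$, so it is zero-dimensional. (5) Conclude by Theorem~\ref{thm:discrete isol} that $\Xi(X, Z)$ is discrete.

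I expect the main obstacle to be step (3) in the precise generality needed: the classical $\Q$-factoriality statements for rational surface singularities are usually phrased for complex surface germs or excellent two-dimensional local rings, and here I need it for the local ring $\O_{X,\eta}$ where $\eta$ is a (non-closed) generic point of a curve on a threefold. One must be careful that ``rational singularity'' is inherited by this localization and that the torsion-of-class-group conclusion is available in this setting; this is true but requires citing the right form of the result (e.g.~via the fact that such a local ring has a finite class group because its completion or its henselization does, together with faithfully flat descent of principality). An alternative, perhaps cleaner, route avoiding localization at $\eta$ altogether is to argue by general hyperplane section: a general hyperplane $H$ through a general point $x \in C$ gives a normal surface $X \cap H$ with a rational singularity at $x$ (rationality being preserved under general hyperplane sections by a theorem of Elkik), this surface germ is $\Q$-factorial in the classical sense, $K_{X \cap H} = (K_X + H)|_{X \cap H}$ is $\Q$-Cartier at $x$ by adjunction, and since $H$ is Cartier this forces $K_X$ to be $\Q$-Cartier at $x$ — and a general such $x$ sweeps out a dense open of $C$, so $K_X$ is $\Q$-Cartier at the generic point of $C$. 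Getting the adjunction/specialization bookkeeping exactly right is the only delicate point; everything else is a direct appeal to Theorem~\ref{thm:discrete isol}.
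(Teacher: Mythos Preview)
Your proposal is correct and reaches the same endpoint as the paper (namely, reduce to Theorem~\ref{thm:discrete isol} by showing that the non-\Q-Cartier locus of $K_X$ is zero-dimensional), but the route you take to that endpoint is genuinely different.

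The paper does not invoke Lipman's theorem on two-dimensional rational singularities at all. Instead, it first proves a general structural result, Theorem~\ref{thm:gen num Q-fact}, asserting that \emph{any} normal variety is numerically \Q-factorial outside a closed subset of codimension $\ge 3$; the proof is a relative Picard-number count on a log resolution. It then quotes the result of~\cite{BdFFU13} (Theorem~\ref{thm:ratl} here) that on a variety with rational singularities, numerically \Q-Cartier divisors are already \Q-Cartier. Combining these on the threefold $X$ yields that $X \minus W$ is honestly \Q-factorial for some finite set $W$, whence the non-\Q-Cartier locus of $K_X$ is finite, and Theorem~\ref{thm:discrete isol} applies.

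Your argument via localization at the generic point $\eta$ of a curve $C$ is more elementary and bypasses the numerical-\Q-Cartier machinery entirely: $\O_{X,\eta}$ is an excellent two-dimensional normal local ring, rationality localizes (flat base change for $R^i f_* \O$), and then Lipman's finiteness of the divisor class group for two-dimensional rational singularities gives \Q-factoriality at $\eta$. This is perfectly valid in the generality you need. The trade-off is that the paper's detour through Theorem~\ref{thm:gen num Q-fact} is not wasted effort: that theorem is of independent interest and is also the engine behind Corollaries~\ref{cor:gen discrete} and~\ref{cor:gen test}, which your more direct argument does not yield.

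One caution on your alternative hyperplane-section route: the step ``$K_{X\cap H}$ is \Q-Cartier at $x$, and since $H$ is Cartier this forces $K_X$ to be \Q-Cartier at $x$'' is not a formal consequence of adjunction --- you are trying to lift \Q-Cartierness from a divisor to the ambient space, and adjunction as written runs the other way. Your instinct that this is the delicate point is correct; the localization argument is the cleaner of the two and should be the one you pursue.
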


\begin{rem}
By Proposition~\ref{prp:elem}.\ref{itm:elem.dcc}, the conclusion about discreteness can be rephrased as saying that $\Xi(X, Z)$ satisfies ACC (the ascending chain condition) for bounded subsequences.
\end{rem}

Theorem~\ref{thm:discrete 3} follows from Theorem~\ref{thm:discrete isol} combined with the following result also proved in this article.

\begin{thm}[Generic numerical \Q-factoriality] \label{thm:gen num Q-fact}
Let $X$ be a normal complex variety. Then there is a closed subset $W \subset X$ of codimension at least three such that $X \minus W$ is numerically \Q-factorial.
\end{thm}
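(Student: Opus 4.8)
The plan is to reduce the assertion, via the generic points of the codimension-two components of the singular locus of $X$, to the classical fact that two-dimensional normal local rings are numerically $\Q$-factorial. First I would recall from Section~\ref{sec:num Q-Cartier} (following~\cite{BdFF12}) that, after fixing a log resolution $\pi\colon Y\to X$, a Weil divisor $D$ on an open set $U\subseteq X$ is numerically $\Q$-Cartier if and only if its numerical pullback $\pi^\ntl D$ exists over $\pi^{-1}(U)$ --- that is, there is a $\Q$-divisor $D_Y$ with $\pi_* D_Y=D$ which is numerically $\pi$-trivial --- and that this does not depend on the chosen resolution. The key preliminary observation is that $\pi^\ntl D$ is \emph{unique} whenever it exists: if $B$ is $\pi$-exceptional and both $B$ and $-B$ are $\pi$-nef, then the negativity lemma, applied to the proper birational morphism $\pi^{-1}(U)\to U$, gives $B\ge 0$ and $B\le 0$, hence $B=0$. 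It follows that ``numerically $\Q$-Cartier'' is a Zariski-local property: numerical pullbacks computed over the members of an open cover of $X$ agree on overlaps, by uniqueness, and therefore glue. In particular $X$ is numerically $\Q$-factorial if and only if every point of $X$ has an open neighbourhood over which every Weil divisor is numerically $\Q$-Cartier.

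Since $X$ is normal, the singular locus $\Sigma:=X_{\sing}$ has codimension at least two, and over $X_{\reg}$ every Weil divisor is Cartier. Let $C_1,\dots,C_r$ be the irreducible components of $\Sigma$ of codimension exactly two in $X$, and let $\Sigma'$ be the union of the components of $\Sigma$ of codimension at least three. I claim that for each $j$ there are a dense open subset $C_j^\circ\subseteq C_j$ and an open neighbourhood $U_j\supseteq C_j^\circ$ in $X$, depending only on $X$, such that every Weil divisor on $U_j$ is numerically $\Q$-Cartier. Granting this, set $W:=\Sigma'\cup\bigcup_{j=1}^r\bigl(C_j\minus C_j^\circ\bigr)$. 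This is a closed subset, and each of its finitely many pieces has codimension at least three in $X$ --- a proper closed subset of the codimension-two variety $C_j$ has dimension at most $\dim X-3$ --- so $\codim_X W\ge 3$. Moreover $X\minus W$ is covered by the open sets $X_{\reg}$ and $U_1\minus W,\dots,U_r\minus W$, over each of which every Weil divisor is numerically $\Q$-Cartier; applying the previous paragraph to the normal variety $X\minus W$, we conclude that $X\minus W$ is numerically $\Q$-factorial.

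It remains to prove the claim, which is a local computation at the generic point $\eta$ of $C_j$. The local ring $A:=\O_{X,\eta}$ is a two-dimensional excellent normal local domain, and $Y_\eta:=Y\times_X\Spec A$ is a regular two-dimensional scheme, so $\pi_\eta\colon Y_\eta\to\Spec A$ is a resolution of singularities; its exceptional divisors are exactly the $(E_i)_\eta:=E_i\times_X\Spec A$, one for each exceptional divisor $E_i$ of $\pi$ with $\pi(E_i)=C_j$, and each $(E_i)_\eta$ is irreducible, being the generic fibre of $E_i\to C_j$. By Mumford's theorem --- which holds over an excellent two-dimensional normal local ring, e.g.\ by passing to the completion --- the intersection matrix $\bigl((E_i)_\eta\cdot(E_j)_\eta\bigr)$, computed on the regular surface $Y_\eta$, is negative definite, in particular invertible. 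Consequently, for \emph{every} Weil divisor $D$ on $X$, the linear system determining the coefficients $a_i$ in the putative numerical pullback $\widetilde{D}|_{Y_\eta}+\sum_i a_i\,(E_i)_\eta$ of $D$ over $\Spec A$ has a unique rational solution, so $D$ is numerically $\Q$-Cartier over $\Spec A$. To conclude one spreads out: the $\Q$-divisor $\widetilde{D}+\sum_i a_i E_i$ on $Y$ restricts over $\Spec A$ to this numerical pullback and pushes forward to $D$, and because its intersection numbers with the algebraically moving family of $\pi$-exceptional curves over $C_j$ are locally constant and vanish over $\eta$, it is numerically $\pi$-trivial over a neighbourhood $U_j$ of a dense open subset $C_j^\circ\subseteq C_j$. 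Since this vanishing over $\eta$ propagates along the family regardless of $D$, the sets $C_j^\circ$ and $U_j$ can be chosen uniformly in $D$.

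The main obstacle is this last, spreading-out step. One must exhibit, independently of $D$, a dense open $C_j^\circ\subseteq C_j$ over which the $\pi$-exceptional curves form an equisingular family, so that the vanishing of the relevant intersection numbers at the generic point $\eta$ genuinely propagates to a Zariski neighbourhood of $C_j^\circ$ in $X$, rather than only to formal or analytic germs along $C_j$. Making this precise rests on generic flatness together with a careful analysis of the exceptional fibres of $\pi$ over $C_j$. The remaining ingredients --- the negativity lemma, the negative-definiteness of the intersection form of a resolution of a two-dimensional normal local ring, and the Zariski-local character of numerical $\Q$-Cartierness --- are standard.
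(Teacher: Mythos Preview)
Your approach is correct in outline and closely related to the paper's, but the paper executes the key step differently and thereby sidesteps exactly the spreading-out obstacle you flag at the end. Like you, the paper fixes a log resolution $f\colon \wt X\to X$, discards the images of exceptional divisors mapping to codimension $\ge 3$, and for each remaining $E_i$ (with $f(E_i)$ of codimension two) uses generic smoothness to shrink $f(E_i)$ so that $f_i\colon E_i\to f(E_i)$ is smooth. At this point, however, instead of localizing at the generic point of $f(E_i)$, invoking Mumford's negative-definite intersection form, and then spreading the resulting numerical pullback back out, the paper argues globally by a dimension count: Stein factorization of each $f_i$ shows that every irreducible component of every fibre of $f_i$ represents the \emph{same} class $\gamma_i\in N_1(\wt X/X)_\Q$, so $N_1(\wt X/X)_\Q$ is spanned by $\gamma_1,\dots,\gamma_\ell$ and hence $\rho(\wt X/X)\le\ell$. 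Feeding this into the short exact sequence
\[
0 \lto \bigoplus_{i=1}^\ell \Q\cdot[E_i] \lto N^1(\wt X/X)_\Q \lto \Cln(X)_\Q \lto 0
\]
of Proposition~\ref{prp:ses} forces $\Cln(X)_\Q=0$ directly.

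What this buys is precisely the uniformity-in-$D$ that your last paragraph worries about: the equisingularity of the exceptional fibres (via generic smoothness plus Stein factorization) is established once and for all, and the conclusion for every Weil divisor $D$ then falls out of the linear-algebra of the exact sequence, with no need to construct the numerical pullback of $D$ explicitly or to argue that its triviality at $\eta$ propagates. Your argument can certainly be completed---the ``equisingular family'' you ask for is exactly the smooth family produced by generic smoothness, and the locally constant intersection numbers follow---but the paper's packaging via $N_1$, $N^1$, and Proposition~\ref{prp:ses} is cleaner and avoids having to make the propagation precise.
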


Concerning rationality of jumping numbers, Urbinati~\cite[Thm.~3.6]{Urb12} gave an example of a normal projective threefold $X$ with a single isolated singularity $x \in X$ such that for the reduced subscheme $Z = \{ x \}$, the set $\Xi(X, Z)$ consists only of irrational numbers. From Theorem~\ref{thm:gen num Q-fact} and~\cite[Thm.~1.3]{BdFFU13}, we see that quite generally this phenomenon can only happen in codimension at least three.

\begin{cor}[Generic discreteness and rationality] \label{cor:gen discrete}
Let $(X, Z)$ be a pair. Then there is a dense open subset $U \subset X$, not depending on $Z$, whose complement has codimension at least three and such that $\Xi(U, Z|_U)$ is a discrete set of rational numbers.
\end{cor}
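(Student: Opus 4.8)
The plan is to combine Theorem~\ref{thm:gen num Q-fact} with the numerically \Q-Cartier case of the discreteness-and-rationality question, namely~\cite[Thm.~1.3]{BdFFU13}; the substantive work lies entirely in the former. First I would apply Theorem~\ref{thm:gen num Q-fact} to produce a closed subset $W \subset X$ of codimension at least three such that $U := X \minus W$ is numerically \Q-factorial. Since $W$ is determined by the geometry of $X$ alone, it is independent of the subscheme $Z$, so $U$ will be the sought-after open set. Moreover $W$, being closed of positive codimension, is nowhere dense, so $U$ is dense, and its complement $W$ has codimension at least three. It thus remains to show that $\Xi(U, Z|_U)$ is a discrete set of rational numbers.

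For this, I would use that $U$ is numerically \Q-factorial, which by the definition recalled in Section~\ref{sec:num Q-Cartier} means that every Weil divisor on $U$ is numerically \Q-Cartier; applying this to the canonical divisor, $K_U = K_X|_U$ is numerically \Q-Cartier. If $Z|_U = \emptyset$ (which happens precisely when $Z$ is set-theoretically contained in $W$), then the ideal sheaf defining $Z|_U$ is the unit ideal, so $\cJ(U, \xi\, Z|_U)$ does not depend on $\xi$; hence $\Xi(U, Z|_U) = \emptyset$ and there is nothing to prove. Otherwise $(U, Z|_U)$ is a pair in the sense of Definition~\ref{dfn:pairs jumping numbers intro} whose underlying variety has numerically \Q-Cartier canonical divisor, and~\cite[Thm.~1.3]{BdFFU13} applies directly to give that $\Xi(U, Z|_U)$ is discrete and consists of rational numbers. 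This is exactly the assertion of the corollary, with $U$ the open set just constructed.

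I do not expect any genuine obstacle in this argument. Two points deserve a word of care. First, the set of jumping numbers of $(U, Z|_U)$ is an invariant intrinsic to the variety $U$, so the reduction from $X$ to $U$ together with the restriction of $Z$ is harmless; in particular one does not need to compare $\cJ(X, \xi Z)$ with $\cJ(U, \xi\, Z|_U)$. Second, should~\cite[Thm.~1.3]{BdFFU13} be formulated only for projective varieties, one should observe that discreteness and rationality of the jumping numbers may be tested locally on $U$, after which one covers $U$ by affine open subsets, chooses proper (e.g.\ projective) compactifications of these, and invokes the cited theorem there. Both points are routine, and the entire weight of the corollary rests on Theorem~\ref{thm:gen num Q-fact}.
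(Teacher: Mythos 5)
Your proposal is correct and follows essentially the same route as the paper: both invoke Theorem~\ref{thm:gen num Q-fact} to produce the open set $U$ with numerically \Q-Cartier canonical divisor, and both then appeal to~\cite[Thm.~1.3]{BdFFU13}. The only cosmetic difference is that the paper unwinds the multiplier-ideal formula of Theorem~\ref{thm:mult ideals num Q-Gor} over a fixed log resolution of $(U, Z|_U)$ to exhibit the jumping numbers explicitly as a discrete set of rationals, whereas you quote the discreteness-and-rationality conclusion of~\cite[Thm.~1.3]{BdFFU13} directly; your extra remarks on the $Z|_U=\emptyset$ case and on reducing to the projective setting are harmless precautions.
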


\begin{rem}
In the example of Urbinati mentioned above, all the jumping numbers are algebraic. What is more, all of them are contained in $\Q(\sqrt{17})$. In view of this, it is natural to ask whether jumping numbers can also be transcendental. It certainly seems reasonable to expect this. However, jumping numbers are hard to compute in concrete examples, especially in the non-\Q-Gorenstein case.
\end{rem}

Under a \Q-Gorenstein assumption, multiplier ideals are known to reduce to the corresponding test ideals in sufficiently large characteristic~\cite{Smi00, Har01, HY03, Tak04}. It has been asked whether this is still true in general (see e.g.~\cite[Rem.~6.2]{Sch11}). An affirmative partial answer was given in~\cite[Thm.~1]{dFDTT14}. Using that result, from Theorem~\ref{thm:gen num Q-fact} we deduce the following statement.

\begin{cor}[Generic comparison to test ideals] \label{cor:gen test}
Let $(X, Z)$ be a pair, where $Z$ is an effective \Q-Cartier Weil divisor on $X$. Then there is a dense open subset $U \subset X$ independent of $Z$, whose complement has codimension at least three, such that the following holds:
Given a model of $(U, Z|_U)$ over a finitely generated $\Z$-subalgebra $A$ of $\C$, and a rational number $c \ge 0$, there is a dense open subset $S \subset \Spec A$ such that for all closed points $s \in S$, we have
\[ \cJ(U, cZ|_U)_s = \tau(U_s, c(Z|_U)_s). \]
Here $\tau(U_s, c(Z|_U)_s)$ denotes the big (non-finitistic) test ideal of the pair $(U_s, c(Z|_U)_s)$.
\end{cor}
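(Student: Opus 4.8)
The plan is to deduce the statement formally by combining Theorem~\ref{thm:gen num Q-fact} with the comparison theorem of de Fernex--Docampo--Takagi--Tucker~\cite[Thm.~1]{dFDTT14}: the latter already provides the reduction of the de Fernex--Hacon multiplier ideal to the big test ideal in large characteristic, under the hypotheses that the ambient variety is numerically \Q-Gorenstein (i.e.\ $K_X$ is numerically \Q-Cartier) and that the divisor in question is \Q-Cartier. The only work is to produce, independently of $Z$, a dense open subset of $X$ on which both hypotheses are met.

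First I would apply Theorem~\ref{thm:gen num Q-fact} to $X$, obtaining a closed subset $W \subset X$ of codimension at least three such that $U := X \minus W$ is numerically \Q-factorial. By construction $W$, and hence $U$, depends only on $X$ and not on $Z$. Since every Weil divisor on $U$ is numerically \Q-Cartier, so is $K_U$; thus $U$ is numerically \Q-Gorenstein. Next I would note that, $U$ being a dense open subset of the normal variety $X$ and $Z$ being an effective \Q-Cartier Weil divisor, the restriction $Z|_U$ is again an effective \Q-Cartier Weil divisor on $U$. Hence the pair $(U, Z|_U)$ satisfies all the standing hypotheses of~\cite[Thm.~1]{dFDTT14}.

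The corollary then follows by feeding the given model of $(U, Z|_U)$ over the finitely generated $\Z$-subalgebra $A \subset \C$, together with the rational number $c \ge 0$, into~\cite[Thm.~1]{dFDTT14}: after passing to a suitable dense open $S \subset \Spec A$ — needed to spread out the \Q-Cartier property of $Z|_U$ and the multiplier ideal $\cJ(U, cZ|_U)$ — one gets $\cJ(U, cZ|_U)_s = \tau(U_s, c(Z|_U)_s)$ for every closed point $s \in S$, where $\tau$ is the big (non-finitistic) test ideal as in \emph{loc.\ cit.}

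The argument is purely formal; the real content is in the two cited black boxes, so the points that genuinely require care are bookkeeping ones: (i) confirming that numerical \Q-factoriality of $U$ delivers exactly the numerically \Q-Gorenstein hypothesis used in~\cite{dFDTT14} (it does, as noted above); and (ii) matching conventions — that the de Fernex--Hacon multiplier ideal and the big test ideal used here coincide with those of~\cite{dFDTT14}, and that their theorem is stated for, or immediately extends to, the not-necessarily-projective variety $U$ produced here, e.g.\ by working on an affine cover of $U$ since the assertion $\cJ_s = \tau_s$ is local on $U$ and models restrict to affine opens. I expect this convention-and-base-change bookkeeping in (ii) to be the most delicate part of writing up the proof, though it remains routine.
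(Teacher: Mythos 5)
Your argument is exactly the paper's: apply Theorem~\ref{thm:gen num Q-fact} to obtain a numerically \Q-factorial open $U \subset X$ with small complement, then invoke~\cite[Thm.~1]{dFDTT14} on $(U, Z|_U)$. The paper simply states this as ``Immediate from Theorem~\ref{thm:gen num Q-fact} and~\cite[Thm.~1]{dFDTT14}''; your write-up spells out the same two steps and the bookkeeping the paper leaves implicit.
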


\subsection*{Outline of proof of Theorem~\ref{thm:discrete isol}}

The key point is the following conjecture of Urbinati \cite[Conj.~4.6]{Urb12b}.

\begin{conj}[Global generation conjecture] \label{conj:glob gen}
Let $X$ be a complex normal projective variety and $D$ a Weil divisor on $X$. Then there is an ample Cartier divisor $H$ on $X$ such that for any $m \in \N$, the sheaf $\O_X(m(D + H))$ is globally generated.
\end{conj}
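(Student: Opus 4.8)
The plan is to reduce the conjecture to a uniform (linear in $m$) bound on Castelnuovo--Mumford regularity. Since $H$ is Cartier, $\O_X(m(D+H))\cong\O_X(mD)\otimes\O_X(mH)$, the tensor product being automatically reflexive. Fix a very ample Cartier divisor $A$ and put $\O_X(1):=\O_X(A)$. By Mumford's regularity lemma it suffices to produce a constant $c=c(X,D,A)\ge 1$ such that $\O_X(mD)$ is $cm$-regular with respect to $\O_X(1)$ for all $m\ge 1$: then $\O_X(mD)(cm)=\O_X(m(D+cA))$ is globally generated, so $H:=cA$ works (the case $m=0$ being trivial). Thus everything comes down to showing that the regularity of the family $\{\O_X(mD)\}_{m\ge 1}$, measured against a fixed polarization, grows at most linearly in $m$.

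To obtain such a bound I would pass to a resolution $\pi\colon Y\to X$ with $Y$ smooth projective and $\pi$ an isomorphism over $X_{\mathrm{sm}}$ — possible because $\codim_X X_{\mathrm{sing}}\ge 2$ — and work on $Y$ with the line bundle $\O_Y(D_Y)$, where $D_Y=\widetilde D+\Delta$ is the strict transform of $D$ corrected by a $\pi$-exceptional divisor $\Delta$ that I get to choose. Since $\pi$ is an isomorphism over $X_{\mathrm{sm}}$, one has $\pi_*\O_Y(mD_Y)=\O_X(mD)$ there, hence $\pi_*\O_Y(mD_Y)$ is a subsheaf of the reflexive $\O_X(mD)$ whose cokernel lives on $X_{\mathrm{sing}}$; choosing $\Delta$ suitably (possibly after a further blowup, and with corrections growing at most linearly in $m$) one arranges equality. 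Then the projection formula and the Leray spectral sequence reduce the regularity of $\O_X(mD)$ to the vanishing of $H^i\bigl(Y,\O_Y(mD_Y+k\pi^*A)\bigr)$ for $i\ge 1$ and of $R^{q}\pi_*\O_Y(mD_Y)$ for $q\ge 1$, in a range $k\ge cm$. Writing
\[ mD_Y+k\pi^*A \;=\; K_Y+\bigl((m-1)D_Y+(D_Y-K_Y)+k\pi^*A\bigr) \]
and appealing to Kawamata--Viehweg vanishing on the smooth $Y$ (absolute and relative), these vanishings follow once $D_Y$ \emph{and} $D_Y-K_Y$ are $\pi$-nef — so that the bracketed divisor is $\pi$-nef — and $c$ is taken large enough to absorb the remaining estimate coming from the nef and big divisor $\pi^*A$; concretely I want $D_Y\cdot C\ge 0$ and $(D_Y-K_Y)\cdot C\ge 0$ for every curve $C$ contracted by $\pi$.

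The decisive point is therefore the choice of the exceptional correction $\Delta$: I need $\widetilde D+\Delta$ and $\widetilde D+\Delta-K_Y$ to be $\pi$-nef, i.e.\ $(\widetilde D+\Delta)\cdot C\ge\max\{0,K_Y\cdot C\}$ for all contracted $C$, which is a linear system governed by the intersection pairing $\Delta\mapsto(\Delta\cdot C)_C$ on $\pi$-exceptional divisors. When the non-\Q-Cartier locus of $D$ is zero-dimensional this pairing is surjective — the exceptional divisors over an isolated bad point have negative definite intersection form — so a suitable $\Delta$ exists, while over the rest of $X_{\mathrm{sing}}$, where $D$ is \Q-Cartier, one simply takes $D_Y=\pi^*D$ and the estimates are classical; this should prove the conjecture, and hence Theorems~\ref{thm:discrete isol} and~\ref{thm:discrete 3}, in the isolated case. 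For a non-\Q-Cartier locus of positive dimension, however, the intersection form acquires a null space (curves that move in positive-dimensional fibres of $\pi$), the linear system above need not be solvable, and no multiple of $\pi^*A$ can help because $\pi^*A$ is trivial on the contracted curves — this failure of relative positivity of the Weil pullback of $D$ is the genuine obstacle, and I expect it to be the hard part of the general conjecture. By Theorem~\ref{thm:gen num Q-fact} one can at least always recover the situation after deleting a closed subset of codimension three, which is presumably as far as this argument reaches unconditionally. The remaining steps — the linear bound on the exceptional corrections, the vanishing of $R^{>0}\pi_*$, and controlling the codimension-$\ge 2$ discrepancy between $\pi_*\O_Y(mD_Y)$ and its reflexive hull $\O_X(mD)$ — should be routine once the numerical positivity is in hand; note that Proposition~\ref{prp:elem}.\ref{itm:elem.per} only disposes of finitely many congruence classes of $m$, so a uniform linear bound cannot be avoided.
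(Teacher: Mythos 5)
The statement you were asked to prove is stated in the paper as a \emph{conjecture}, and it remains open; the paper establishes only the special case in which one asks for global generation away from the positive-dimensional part of the non-\Q-Cartier locus of $D$ (Theorem~\ref{thm:glob gen isol}). You correctly recognized this: your write-up explicitly says the argument reaches only the isolated case, identifies the failure of relative positivity over a positive-dimensional non-\Q-Cartier locus as the genuine obstruction, and points to Theorem~\ref{thm:gen num Q-fact} as the unconditional codimension-three fallback. That diagnosis is sound and matches the paper's structure.

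Your sketch of the isolated case, however, takes a genuinely different route from the paper, and it has two real gaps. The paper does not try to arrange $\pi_*\O_Y(mD_Y)=\O_X(mD)$ at all. Instead it first replaces $D$ by a multiple $\wt D=N_0D$ that is Cartier off the non-\Q-Cartier locus $W$, then passes to the \emph{minimal sheaf resolution} $f\colon Y\to X$ of $\O_X(\wt D)$ (Section~\ref{sec:sheaf resol}), where $\O_Y(B)=\factor{f^*\O_X(\wt D)}{\tors}$ is automatically $f$-ample by Theorem~\ref{thm:exist resol}. After twisting by an ample $\wt H$ it applies \emph{Fujita vanishing} plus relative Serre vanishing and Castelnuovo--Mumford regularity to make $\sF_m=f_*\O_Y(mB)\otimes\O_X(m\wt H)$ globally generated with $H^1=0$, and then handles the discrepancy $\sF_m\hookrightarrow\sF_m^{**}=\O_X(m(\wt D+\wt H))$ not by forcing equality but by noting that the cokernel $\cQ_m$ is supported on the finite set $W$, hence is trivially globally generated at each isolated point of $W$; Lemma~\ref{lem:ext glob gen} then finishes. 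This is exactly where the isolated hypothesis enters, and the whole argument is characteristic-free (Fujita, not Kawamata--Viehweg). Your version, by contrast, wants a \emph{single} exceptional correction $\Delta$ making $D_Y=\widetilde D+\Delta$ and $D_Y-K_Y$ simultaneously $\pi$-nef and satisfying $\pi_*\O_Y(mD_Y)=\O_X(mD)$ for all $m$. The second requirement is not addressed: when $D$ is not \Q-Cartier, the natural pullbacks $f^\ntl(mD)$ do not scale linearly in $m$, so there is no reason a fixed $D_Y$ pushes forward correctly for every $m$; the paper deliberately avoids this by comparing $\sF_m$ to its reflexive hull instead. The first requirement is justified only by a ``negative definite intersection form over an isolated point'' assertion, which is a genuinely two-dimensional phenomenon (Mumford); in dimension $\ge 3$ the relevant pairing is between exceptional \emph{divisors} and contracted \emph{curves} and is not a symmetric form, so negative definiteness has no meaning. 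The correct substitute — existence of a $\pi$-ample line bundle that can be written as an anti-effective exceptional divisor when $\pi$ is an isomorphism outside a point — is what the paper packages cleanly as $f$-ampleness of $\factor{f^*\O_X(\wt D)}{\tors}$ for the sheaf resolution. If you replace your two heuristic steps with that construction, your argument essentially becomes the paper's.
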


It was noted by Urbinati in~\cite[Sec.~5]{Urb12} that Conjecture~\ref{conj:glob gen} is closely related to the question of discreteness of jumping numbers. In fact, a proof of the conjecture would provide an unconditionally positive answer to that question. We establish a weak form of the conjecture that only deals with isolated points of the non-\Q-Cartier locus of the Weil divisor $D$ in question~(Theorem~\ref{thm:glob gen isol}).
Turning to the proof of Theorem~\ref{thm:discrete isol}, assume first for simplicity that $X$ is projective, and suppose by way of contradiction that we are given a strictly descending chain of multiplier ideals
\begin{sequation} \label{eqn:chain}
\cJ(X, t_1 Z) \supsetneq \cJ(X, t_2 Z) \supsetneq \cdots,
\end{sequation}%
where the sequence $(t_k)$ converges to an accumulation point $t_0$ of $\Xi(X, Z)$. Theorem~\ref{thm:glob gen isol} enables us to find an ample line bundle $\sL$ on $X$ such that $\sL \tensor \cJ(X, t_k Z)$ is globally generated for all $k \ge 1$. Taking global sections in~\eqref{eqn:chain} then yields a contradiction.

In general, if $X$ is a quasi-projective variety such that the non-\Q-Cartier locus of $K_X$ is zero-dimensional, to apply the above reasoning we need to compactify $X$ to a projective variety $\wb X$. The non-\Q-Cartier locus of $K_{\wb X}$ may then no longer be zero-dimensional. We therefore construct a sequence of ideal sheaves $\cI_k \subset \O_{\wb X}$ which on $X$ restricts to the sequence~\eqref{eqn:chain} and which has the property that all $\sL \tensor \cI_k$ are globally generated on the open set $X \subset \wb X$, for some ample line bundle $\sL$ on $\wb X$. Then we conclude as before.

\subsection*{Acknowledgements}

I would like to thank Thomas Peternell and Stefano Urbinati for stimulating discussions on the subject. Stefano Urbinati kindly read a draft version of this paper.
Furthermore I would like to thank Fabrizio Catanese for explaining Example~\ref{exm:more cones} to me.

\section{Notation and conventions} \label{sec:notation}

We work over the field of complex numbers $\C$ throughout.
A variety is an integral separated scheme of finite type over $\C$.
A \emph{pair} $(X, Z)$ consists of a normal variety $X$ and a proper closed subscheme $Z \subsetneq X$.
Unless otherwise specified, by a divisor on a normal variety $X$ we mean a Weil divisor with integer coefficients. For $k \in \{ \Z, \Q, \R \}$, a $k$-divisor is a Weil divisor with coefficients in $k$. The group of $\Z$-divisors on $X$ modulo linear equivalence is denoted $\Cl(X)$.

\subsection*{Boundaries}

A \emph{boundary} on a normal variety $X$ is an effective \Q-divisor $\Delta$ such that $K_X + \Delta$ is \Q-Cartier. In this case we will say that $(X, \Delta)$ is a \emph{log pair}. Note that the coefficients of $\Delta$ may be larger than $1$. If $f\!: Y \to X$ is a proper birational morphism from a normal variety $Y$, we write
\[ K_{Y/X}^\Delta := K_Y + f^{-1}_* \Delta - f^*(K_X + \Delta) \]
for the relative canonical divisor of $(X, \Delta)$.

\subsection*{Reflexive sheaves}

If $\sF$ is a coherent sheaf on a normal variety $X$, we denote its reflexive hull (i.e.~its double dual) by $\sF^{**}$. The torsion subsheaf of $\sF$ is denoted $\tor \sF$. We often write $\factor{\sF}{\tors}$ as a shorthand for $\factor{\sF}{\tor \sF}$.

\subsection*{Log resolutions}

We will need to discuss log resolutions of different kinds of objects, all of which are of course variations on the same theme. For the existence of such resolutions, we refer to~\cite[Thm.~4.2]{dFH09}. Let $X$ be a normal variety. A log resolution of $X$ is a proper birational morphism $f\!: \wt X \to X$, where $\wt X$ is smooth and $\Exc(f)$, the exceptional locus of $f$, is a divisor with simple normal crossings (snc, for short). A log resolution of a log pair $(X, \Delta)$ is a log resolution of $X$ such that $\Exc(f) \cup f^{-1}_* \Delta$ has snc support.

The constant sheaf of rational functions on $X$ is denoted by $\cK_X$. A \emph{fractional ideal sheaf} $\mf a \subset \cK_X$ on $X$ is a coherent $\O_X$-submodule of $\cK_X$. A log resolution of a nonzero fractional ideal sheaf $\mf a \subset \cK_X$ is a log resolution $f\!: \wt X \to X$ of $X$ such that $\mf a \cdot \O_{\wt X} = \O_{\wt X}(E) \subset \cK_{\wt X}$ for some Cartier divisor $E$ on $\wt X$, where $\Exc(f) \cup \supp(E)$ has simple normal crossings. Note that $\mf a \cdot \O_{\wt X} \isom \factor{f^* \mf a}{\tors}$, i.e.~we do not take the reflexive hull.
A log resolution of a subscheme $Z \subsetneq X$ is a log resolution of the ideal sheaf $\mf a \subset \O_X \subset \cK_X$ of $Z$. That is, the scheme-theoretic inverse image $f^{-1}(Z) \subset \wt X$ is required to be a Cartier divisor whose support has simple normal crossings with $\Exc(f)$.

If we need to simultaneously resolve several objects of the types described above, we will talk about \emph{joint log resolutions}. E.g.~using the above notation, a joint log resolution of $(X, \Delta)$, $\mf a$, and $Z$ would be a log resolution of each of the three objects such that $\Exc(f) \cup f^{-1}_* \Delta \cup \supp(E) \cup f^{-1}(Z)$ has snc support.

\begin{rem}
If $Z \subset X$ is a Weil divisor such that $K_X + Z$ is \Q-Cartier, then being a log resolution of the subscheme $Z \subset X$ is a slightly stronger notion than being a log resolution of the log pair $(X, Z)$. We trust that this will not lead to any confusion.
\end{rem}

\subsection*{Global generation of sheaves}

If $\sF$ is a coherent sheaf on a variety $X$ and $x \in X$ is a point, we say that $\sF$ is globally generated at $x$ if the natural morphism
\[ H^0(X, \sF) \tensor_\C \O_X \lto \sF \]
is surjective at $x$. We say that $\sF$ is globally generated on an open subset $U \subset X$ if $\sF$ is globally generated at every point $x \in U$.


\subsection*{Relative N\'eron--Severi spaces}

Let $f\!: Y \to X$ be a proper morphism between normal varieties, where $Y$ is \Q-factorial. We denote by $N^1(Y/X)_\Q$ the vector space of \Q-divisors on $Y$ modulo $f$-numerical equivalence (that is, numerical equivalence on all curves contracted by~$f$). We denote by $N_1(Y/X)_\Q$ the vector space spanned by all curves contracted by $f$, modulo numerical equivalence. These two vector spaces are finite-dimensional, and dual to each other via the intersection pairing
\[ N^1(Y/X)_\Q \x N_1(Y/X)_\Q \lto \Q. \]
Their common dimension is called the relative Picard number $\rho(Y/X)$ of $f$.

\section{Pullbacks of Weil divisors and numerical \Q-factoriality} \label{sec:Weil pullback}

In order to define multiplier ideals in the absence of any \Q-Gorenstein assumption, it is necessary to dispose of a notion of pullback for arbitrary Weil divisors. Such a pullback was introduced in~\cite{dFH09} and rephrased using the language of nef envelopes in~\cite{BdFF12}.
For a certain class of Weil divisors, called \emph{numerically \Q-Cartier} divisors, this pullback is particularly well-behaved. Numerically \Q-Cartier divisors were introduced in~\cite{BdFF12} from a $b$-divisorial point of view and reinterpreted in a somewhat more down-to-earth fashion in~\cite{BdFFU13}.
For ease of reference, we recall in this section the relevant facts and definitions. For more details and full proofs, we refer the reader to the original papers cited above.

\begin{rem}
Closely related notions of pullback and numerical \Q-Cartierness were already discussed by Nakayama~\cite[Ch.~II, Lem.~2.12 and Ch.~III, Cor.~5.11]{Nak04}.
\end{rem}

\subsection{Pulling back Weil divisors}

Consider a proper birational morphism $f\!: Y \to X$ between normal varieties, and let $D$ be a Weil divisor on $X$. The \emph{natural pullback} $f^\ntl D$ of $D$ along $f$ is defined by
\[ \O_Y(-f^\ntl D) = ( \O_X(-D) \cdot \O_Y )^{**}, \]
where we consider $\O_X(-D) \subset \cK_X$ as a fractional ideal sheaf on $X$.
We define the \emph{pullback} $f^* D$ of $D$ along $f$ by setting
\[ f^* D := \lim_{k \conv \infty} \frac{f^\ntl(kD)}{k} = \inf_{k \ge 1} \frac{f^\ntl(kD)}{k}, \]
where the limit and the infimum are to be understood coefficient-wise. The limits actually exist in $\R$, hence $f^* D$ is a well-defined $\R$-divisor.
If $D$ is \Q-Cartier, then this definition agrees with the usual notion of pullback. We have $f^*(mD) = m \cdot f^* D$ for any integer $m \ge 0$. Thus we can extend the map $f^*$ to arbitrary \Q-divisors by clearing denominators. Furthermore, we have $f^*(-D) \ge -f^* D$. However, this inequality may be strict -- see Section~\ref{sec:num Q-Cartier} below.

\subsection{Relative canonical divisors}

Let $f\!: Y \to X$ be as before, and fix canonical divisors $K_X$ and $K_Y$ on $X$ and on $Y$, respectively, that satisfy $K_X = f_* K_Y$. For any $m \ge 1$, we define the \emph{$m$-th limiting relative canonical divisor} of $Y$ over $X$ by
\[ K_{Y/X, m} := K_Y - \frac{1}{m} f^\ntl(mK_X) \]
and the \emph{relative canonical divisor} of $Y$ over $X$ by
\[ K_{Y/X}^- := K_Y - f^* K_X = \lim_{m \conv \infty} K_{Y/X, m}. \]
For any $m$, we have the inequality $K_{Y/X, m} \le K_{Y/X}^-$. If $K_X$ is \Q-Cartier, then $K_{Y/X}^-$ coincides with the usual relative canonical divisor $K_{Y/X}$. Note, however, that here we have not defined the symbol $K_{Y/X}$ in general.

\subsection{Numerically \Q-Cartier divisors} \label{sec:num Q-Cartier}

We next address the question of when the equality $f^*(-D) = -f^* D$ holds.

\begin{prp}[Numerically \Q-Cartier divisors] \label{prp:num Q-Cartier}
Let $X$ be a normal variety and $D$ a \Q-divisor on $X$. Then the following conditions are equivalent:
\begin{enumerate}
\item\label{itm:num Q.1} There is a proper birational morphism $f\!: Y \to X$, where $Y$ is \Q-factorial, and an $f$-numerically trivial \Q-divisor $D'$ on $Y$ such that $f_* D' = D$.
\item\label{itm:num Q.2} For any proper birational morphism $f\!: Y \to X$ where $Y$ is \Q-factorial, there is an $f$-numerically trivial \Q-divisor $D'$ on $Y$ such that $f_* D' = D$.
\item\label{itm:num Q.4} For any proper birational morphism $f\!: Y \to X$, we have $f^*(-D) = -f^* D$.
\end{enumerate}
\end{prp}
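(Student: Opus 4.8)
The strategy is to prove the cyclic chain of implications \ref{itm:num Q.2} $\imp$ \ref{itm:num Q.1} $\imp$ \ref{itm:num Q.4} $\imp$ \ref{itm:num Q.2}, the first being trivial once we know \Q-factorial modifications exist (which they do by resolution of singularities, taking a log resolution and possibly a further small modification; but in fact any \Q-factorial $Y$ dominating $X$ works as a target). So the content is in \ref{itm:num Q.1} $\imp$ \ref{itm:num Q.4} and \ref{itm:num Q.4} $\imp$ \ref{itm:num Q.2}.

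For \ref{itm:num Q.1} $\imp$ \ref{itm:num Q.4}: suppose we have $f\!: Y \to X$ with $Y$ \Q-factorial and an $f$-numerically trivial \Q-divisor $D'$ with $f_* D' = D$. First one checks that it suffices to verify $g^*(-D) = -g^* D$ for morphisms $g\!: Z \to X$ that factor through $f$, since any proper birational $g$ can be dominated by such a $Z$ (take $Z$ to resolve the indeterminacy), and the pullback is compatible with composition in the sense that $g^* D = h^* f^* D$ where $h\!: Z \to Y$. So assume $g = f \circ h$. Now $f^{**}$-type computations: since $D'$ is $f$-numerically trivial and $Y$ is \Q-factorial, $D'$ is \Q-Cartier, so $f^* D = D'$ (one must check the natural-pullback limit computes $D'$; here the key is that $f^\ntl(mD) \ge m D'$ always, while numerical triviality of $D'$ over $f$ together with negativity/rigidity of exceptional divisors forces equality in the limit — this is essentially the negativity lemma). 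Granting $f^* D = D'$, we get $f^*(-D) = -D' = -f^* D$ because \Q-Cartier divisors have exact pullback. Then pulling back further by $h$ and using $h^*$ exactness on the \Q-Cartier divisor $D'$ gives $g^*(-D) = -g^*D$.

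For \ref{itm:num Q.4} $\imp$ \ref{itm:num Q.2}: fix any proper birational $f\!: Y \to X$ with $Y$ \Q-factorial. Set $D' := f^* D$, which by the \Q-factoriality of $Y$ we may clear denominators to treat as a genuine \Q-divisor, and it satisfies $f_* D' = D$ (the pullback restricts to $D$ over the locus where $f$ is an isomorphism, which is dense). It remains to show $D'$ is $f$-numerically trivial. The hypothesis \ref{itm:num Q.4} gives $f^*(-D) = -f^* D = -D'$. The point is that the $\R$-divisor $f^* D$, being a limit $\inf_k \tfrac1k f^\ntl(kD)$ of natural pullbacks, always satisfies $f^* D \cdot C \ge 0$ for any $f$-exceptional curve $C$ that is "negative" — more precisely, one uses that $-f^\ntl(kD)$ is the divisor of a reflexive ideal pulled back, so it is relatively nef-ish in an appropriate sense; applying the same to $-D$ and combining $f^*(-D) = -f^*D$ pins down $f^* D \cdot C = 0$ for all curves $C$ contracted by $f$. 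Thus $D' = f^* D$ is $f$-numerically trivial.

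The main obstacle is the precise intersection-theoretic argument establishing that $f^* D$ is $f$-nef (equivalently that $-f^\ntl(mD)$, as the pullback of the reflexive fractional ideal $\O_X(-mD)$ with the torsion removed, yields a relatively semi-ample or at least relatively nef limit) — this rigidity statement, which is where the negativity lemma for birational morphisms enters, is what converts the equality $f^*(-D) = -f^*D$ into honest $f$-numerical triviality, and symmetrically what forces $f^* D = D'$ in the first implication. Once that is in hand, the rest is formal manipulation of limits and the compatibility of $f^*$ with composition.
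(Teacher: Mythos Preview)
The paper does not actually prove this proposition; it simply cites \cite[Props.~5.3, 5.9]{BdFFU13}. Your sketch is broadly along the lines of the argument given there: the technical heart is that $-f^*D$ is always $f$-nef (the ``nef envelope'' property from \cite{BdFF12}), so that condition~(3) forces both $f^*D$ and $-f^*D$ to be $f$-nef and hence $f$-numerically trivial, while conversely an $f$-numerically trivial lift pins down $f^*D$ via the negativity lemma. You correctly identify this nef-ness/negativity step as the ``main obstacle''.

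Two points in your write-up need correction, however. First, the assertion that ``by the \Q-factoriality of $Y$ we may clear denominators to treat $f^*D$ as a genuine \Q-divisor'' is wrong: \Q-factoriality of $Y$ means every Weil divisor is \Q-Cartier, but says nothing about the rationality of the coefficients of the $\R$-divisor $f^*D$. Rationality is real content (cf.~Remark~\ref{rem:Knum ratl}); it follows because an $f$-numerically trivial lift of $D$ is \emph{unique} (negativity lemma again) and can separately be produced over $\Q$ by solving the rational linear system $(f^{-1}_*D + \sum a_i E_i)\cdot C = 0$ for contracted curves $C$. Second, the compatibilities you invoke --- that $g^*D = h^*(f^*D)$ when $g = f\circ h$, and that the equality $g^*(-D) = -g^*D$ descends from a dominating model --- are not merely formal consequences of the definition $f^*D = \lim \tfrac1k f^\ntl(kD)$; they encode exactly the statement that the collection $\{f^*D\}_f$ assembles into a well-defined $b$-divisor (the nef envelope), which is part of what \cite{BdFF12, BdFFU13} establish and on which the whole argument rests.
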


\begin{proof}
See~\cite[Props.~5.3, 5.9]{BdFFU13}.
\end{proof}

\begin{dfn}[Numerical \Q-factoriality] \label{dfn:num Q-fact}
A \Q-divisor $D$ on a normal variety $X$ is called \emph{numerically \Q-Cartier} if the equivalent conditions of Proposition~\ref{prp:num Q-Cartier} are satisfied.
The vector space of \Q-divisors on $X$ modulo the subspace of numerically \Q-Cartier divisors is denoted by $\Cln(X)_\Q$.
The variety $X$ is called \emph{numerically \Q-factorial} if $\Cln(X)_\Q = 0$, i.e.~if every \Q-divisor on $X$ is numerically \Q-Cartier.
\end{dfn}

\begin{rem} \label{rem:Knum ratl}
In~(\ref{prp:num Q-Cartier}.\ref{itm:num Q.1}) and~(\ref{prp:num Q-Cartier}.\ref{itm:num Q.2}), the divisor $D'$ is unique, and $D' = f^* D$. Hence for a numerically \Q-Cartier divisor $D$, the $\R$-divisor $f^* D$ is in fact a \Q-divisor. In particular, if $K_X$ is numerically \Q-Cartier, then for any normal modification $Y \to X$ the relative canonical divisor $K_{Y/X}^-$ is a \Q-divisor.
\end{rem}

\begin{exm}[Surfaces] \label{exm:surfaces}
Any normal surface is numerically \Q-factorial. This follows from the existence of Mumford's pullback for divisors on surfaces~\cite[Ch.~4.1]{KM98}.
\end{exm}

\begin{exm}[Cones] \label{exm:cones}
Let $Y$ be a smooth projective variety and $L$ an ample divisor on $Y$. We consider the \emph{affine cone} $X = C_a(Y, L)$ over $Y$ with respect to $L$, defined as the spectrum of the section ring
\[ R(X, \sL) := \bigoplus_{n \ge 0} H^0(Y, nL). \]
This is a generalization of the classical affine cone over a variety embedded in projective space.

For any prime divisor $D$ on $Y$, the cone $C(D) = C_a(D, L|_D)$ is a divisor on $X$. The map $D \mapsto C(D)$ extends linearly to give an isomorphism
\[ \Cl(X) \isom \factor{\Pic(Y)}{\Z \cdot L.} \]
The divisor $C(D)$ is \Q-Cartier if and only if $D$ and $L$ are \Q-linearly proportional, while $C(D)$ is numerically \Q-Cartier if and only if $D$ and $L$ are numerically proportional~\cite[Ex.~2.31, Lem.~2.32]{BdFF12}.
The canonical divisor of $X$ is given by $K_X = C(K_Y)$.

It follows that $X$ is \Q-factorial if and only if on $Y$, numerical equivalence coincides with \Q-linear equivalence, and the Picard number $\rho(Y) = 1$. The first condition is well-known to be equivalent to $H^1(Y, \O_Y) = 0$. On the other hand, $X$ is numerically \Q-factorial if and only if $\rho(Y) = 1$.
\end{exm}

\begin{exm}[More cones] \label{exm:more cones}
This example is a continuation of the previous one. We make the following claim. \\[1ex]
\emph{In every dimension $n \ge 2$, there exist isolated singularities that are numerically \Q-factorial, but not \Q-factorial (more precisely, the canonical divisor is not \Q-Cartier).} \\[1ex]
Namely, if $Y$ is smooth projective of dimension $n - 1$ and Picard number one, $K_Y$ is ample, and $H^1(Y, \O_Y) \ne 0$, let $B \in \Pic^0(Y)$ be a numerically trivial non-torsion divisor on $Y$. Then the cone $X = C_a(Y, K_Y + B)$ over $Y$ has the required properties.

Concerning the existence of $Y$, if $n = 2$ we may simply take $Y$ to be a curve of genus $g \ge 2$. If $n \ge 3$, let $A$ be an abelian $n$-fold of Picard number one~\cite[Ex.~1.2.26]{Laz04a}, and take $Y \subset A$ to be a smooth ample divisor. Then $K_Y$ is ample by the adjunction formula and $H^1(Y, \O_Y) = H^1(A, \O_A) \ne 0$ by the weak Lefschetz theorem~\cite[Ex.~3.1.24]{Laz04a}. If $n \ge 4$, then $\Pic A \to \Pic Y$ is an isomorphism~\cite[Ex.~3.1.25]{Laz04a}, hence $\rho(Y) = \rho(A) = 1$. In case $n = 3$, at least if $Y$ is very general in a sufficiently ample linear system the infinitesimal Noether--Lefschetz theorem implies that
\[ H^{1,1}(A) \cap H^2(A, \Q) \lto H^{1,1}(Y) \cap H^2(Y, \Q) \]
is surjective~\cite[Cor.~on p.~179]{CGGH83}. By the Lefschetz $(1, 1)$-theorem~\cite[Rem.~1.1.21]{Laz04a}, we again obtain $\rho(Y) \le \rho(A) = 1$.
\end{exm}

We end this section with some facts about numerically \Q-Cartier divisors which we will use later.

\begin{prp}[Short exact sequence] \label{prp:ses}
Let $f\!: Y \to X$ be a proper birational morphism, where $Y$ is \Q-factorial, and let $E_1, \dots, E_\ell$ be the divisorial components of the exceptional locus of $f$. Then we have a short exact sequence of \Q-vector spaces
\[ 0 \lto \bigoplus_{i=1}^\ell \Q \cdot [E_i] \lto N^1(Y / X)_\Q \lto \Cln(X)_\Q \lto 0 \]
induced by the pushforward of divisors along $f$.
\end{prp}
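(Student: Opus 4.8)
The plan is to identify the three maps in the sequence and then verify exactness at each of the three spots. The first map sends the class $[E_i]$ to itself viewed in $N^1(Y/X)_\Q$; this is well-defined because the $E_i$ are $f$-exceptional and hence their classes (as Cartier divisors on the $\Q$-factorial variety $Y$) live in $N^1(Y/X)_\Q$. The second map is the one induced by $f_*$: a $\Q$-divisor $D'$ on $Y$, being $f$-numerically trivial on the image classes, maps to $f_* D' \in \Cl(X)_\Q$, and one checks this factors through $\Cln(X)_\Q$. Indeed, by Proposition~\ref{prp:num Q-Cartier} (equivalence of~\ref{itm:num Q.1} and~\ref{itm:num Q.2}) and Remark~\ref{rem:Knum ratl}, a $\Q$-divisor $E$ on $X$ is numerically $\Q$-Cartier precisely when it is of the form $f_* D'$ for some $f$-numerically trivial $\Q$-divisor $D'$ on $Y$ (using this particular $f$), so the image of $f_*\!: N^1(Y/X)_\Q \to \Cl(X)_\Q$ is exactly the subspace of numerically $\Q$-Cartier classes, and passing to the quotient $\Cln(X)_\Q$ kills it. This simultaneously gives well-definedness of the third arrow and surjectivity on the right.

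First I would establish surjectivity of $N^1(Y/X)_\Q \to \Cln(X)_\Q$. Given an arbitrary $\Q$-divisor class $\delta$ on $X$, pick a representative $D$; its strict transform $f^{-1}_* D$ on $Y$ is a $\Q$-divisor with $f_*(f^{-1}_* D) = D$, so its class in $N^1(Y/X)_\Q$ maps to $\delta$ in $\Cl(X)_\Q$, hence to the image of $\delta$ in $\Cln(X)_\Q$. This shows the composite $N^1(Y/X)_\Q \to \Cl(X)_\Q \to \Cln(X)_\Q$ is onto, which is what is needed. Next, injectivity of the first map: if $\sum a_i [E_i] = 0$ in $N^1(Y/X)_\Q$, then $\sum a_i E_i$ is $f$-numerically trivial; but it is also $f$-exceptional, and an $f$-exceptional $\Q$-divisor on a $\Q$-factorial $Y$ that is $f$-numerically trivial must be zero — this is the negativity lemma (or the standard fact that the exceptional classes $[E_i]$ are linearly independent in $N^1(Y/X)_\Q$), so all $a_i = 0$.

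The real content is exactness in the middle. We must show that $\ker\big(N^1(Y/X)_\Q \to \Cln(X)_\Q\big)$ equals the span of the $[E_i]$. One inclusion is clear: each $E_i$ is $f$-exceptional, so $f_* E_i = 0$, and in particular its image in $\Cln(X)_\Q$ is zero. Conversely, suppose $\alpha \in N^1(Y/X)_\Q$ maps to $0$ in $\Cln(X)_\Q$. Represent $\alpha$ by a $\Q$-divisor $D'$ on $Y$. Then $f_* D'$ is a numerically $\Q$-Cartier divisor on $X$, so by Proposition~\ref{prp:num Q-Cartier} there is an $f$-numerically trivial $\Q$-divisor $D''$ on $Y$ with $f_* D'' = f_* D'$. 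Now $D' - D''$ pushes forward to zero, hence is $f$-exceptional, hence is a $\Q$-combination of the $E_i$; and $[D''] = 0$ in $N^1(Y/X)_\Q$ since $D''$ is $f$-numerically trivial. Therefore $\alpha = [D'] = [D' - D'']$ lies in the span of the $[E_i]$, as desired.

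I expect the main obstacle to be a clean justification that an $f$-exceptional $\Q$-divisor on the $\Q$-factorial $Y$ which is $f$-numerically trivial vanishes (needed for injectivity on the left), together with the precise identification of the image of $f_*$ with the numerically $\Q$-Cartier classes for the given morphism $f$ — the latter is exactly the equivalence~\ref{itm:num Q.1}$\Leftrightarrow$\ref{itm:num Q.2} in Proposition~\ref{prp:num Q-Cartier}, which we may invoke, but one should be careful that the $D''$ produced there is $f$-numerically trivial and not merely numerically trivial in some weaker sense. Both points are well known, so the argument is essentially bookkeeping; the only subtlety is matching the definition of $\Cln(X)_\Q$ (as the quotient of all $\Q$-divisors by the numerically $\Q$-Cartier ones) with the cokernel description. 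I would also remark that the dimension count then gives $\rho(Y/X) = \ell + \dim_\Q \Cln(X)_\Q$, which is the form in which the sequence will be applied.
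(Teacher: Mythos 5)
The paper does not prove this proposition; it simply cites~\cite[Cor.~5.4.ii)]{BdFFU13}. Your argument supplies a correct direct proof along what must be the intended route: injectivity on the left follows from the negativity lemma (an $f$-exceptional, $f$-numerically trivial $\Q$-Cartier divisor on the $\Q$-factorial $Y$ is zero), middle exactness follows from Proposition~\ref{prp:num Q-Cartier}, specifically the equivalence~(\ref{prp:num Q-Cartier}.\ref{itm:num Q.1})$\Leftrightarrow$(\ref{prp:num Q-Cartier}.\ref{itm:num Q.2}) applied to the \emph{given} $f$, and surjectivity on the right is immediate from strict transforms. One wording caution: you at one point speak of ``the image of $f_*\!: N^1(Y/X)_\Q \to \Cl(X)_\Q$''; that map is not actually well-defined, because two $f$-numerically equivalent $\Q$-divisors on $Y$ may have distinct pushforward classes in $\Cl(X)_\Q$ --- their difference pushes forward to something numerically $\Q$-Cartier, which vanishes in $\Cln(X)_\Q$ but not in general in $\Cl(X)_\Q$. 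The statement you actually need, and do also make, is that the pushforward of the subspace of $f$-numerically trivial $\Q$-divisors on $Y$ is precisely the subspace of numerically $\Q$-Cartier $\Q$-divisors on $X$; this simultaneously gives well-definedness of the map $N^1(Y/X)_\Q \to \Cln(X)_\Q$ and the inclusion of its kernel into the span of the $[E_i]$. With that phrasing repaired, your proof is complete.
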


\begin{proof}
See~\cite[Cor.~5.4.ii)]{BdFFU13}.
\end{proof}

\begin{thm}[Rational singularities] \label{thm:ratl}
If $X$ has rational singularities, then the notions of \Q-Cartier and numerically \Q-Cartier divisors coincide, i.e.~any numerically \Q-Cartier divisor is already \Q-Cartier.
\end{thm}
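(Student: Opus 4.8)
The plan is to compare the three $\Q$-vector spaces $\bigoplus_i \Q\cdot[E_i]$, $N^1(Y/X)_\Q$, and $\Cl_{num}(X)_\Q$ appearing in the short exact sequence of Proposition~\ref{prp:ses}, for a suitable choice of resolution $f\colon Y \to X$. Suppose $D$ is a numerically $\Q$-Cartier divisor on $X$; we want to show $D$ is $\Q$-Cartier. Since $\Q$-Cartierness is a local question on $X$, I would replace $X$ by an affine neighborhood of an arbitrary point and choose a log resolution $f\colon Y \to X$ (which exists by the cited resolution results); passing to a higher model only enlarges the exceptional divisor set $\{E_i\}$, which is harmless. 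Being numerically $\Q$-Cartier means $D'=f^*D$ is an $f$-numerically trivial $\Q$-divisor with $f_*D'=D$ (Remark~\ref{rem:Knum ratl}). To prove $D$ is $\Q$-Cartier it suffices to show that $D'$ is $\Q$-linearly equivalent over $X$ to a combination of the $E_i$; equivalently, that $D' \equiv_f 0 \Rightarrow D' \sim_{\Q, f} \sum a_i E_i$, i.e.\ that relative numerical equivalence to zero forces relative $\Q$-linear triviality modulo exceptional divisors — then pushing forward kills the $E_i$ and gives $D \sim_\Q 0$ locally.

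The key input is the rationality hypothesis: if $X$ has rational singularities and $f\colon Y \to X$ is a resolution, then $R^1 f_* \O_Y = 0$. From the exponential sequence (or rather the relative version: the exact sequence $0 \to \Z \to \O_Y \to \O_Y^\times \to 0$ and its pushforward) one gets that $R^1 f_* \O_Y^\times \to R^2 f_* \Z$ is injective, and $R^2 f_* \Z$ is a finitely generated (indeed torsion, after shrinking $X$) group concentrated on the exceptional fibers. Concretely, the standard statement I would invoke is: for $X$ with rational singularities and $f$ a resolution, the natural map $\Pic(Y) \to \bigoplus_i \Z[E_i]$ (or its relative Néron–Severi analogue) has the property that a line bundle numerically trivial on the fibers of $f$ is, after tensoring with a power, supported on the exceptional locus. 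This is essentially the statement that $N^1(Y/X)_\Q$ is spanned by the classes $[E_i]$ when $X$ has rational singularities — i.e.\ the map $N^1(Y/X)_\Q \to \Cl_{num}(X)_\Q$ in Proposition~\ref{prp:ses} is zero, so $\Cl_{num}(X)_\Q = 0$ and $X$ is numerically $\Q$-factorial locally; combined with the injectivity $\Pic(X)_\Q \hookrightarrow \Pic(Y)_\Q$ (which uses normality and $f_*\O_Y = \O_X$) one concludes.

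The main obstacle, and the technical heart, is establishing that for $X$ with rational singularities the relative Néron–Severi space $N^1(Y/X)_\Q$ is generated by the exceptional prime divisors $[E_i]$ — this is where $R^1 f_* \O_Y = 0$ is used and where one must be careful about the difference between numerical and linear equivalence over $X$. I would handle it via the five-term exact sequence of low-degree terms for the Leray spectral sequence applied to $\O_Y^\times$, or more directly by the Grauert–Riemenschneider–style vanishing combined with the theorem on formal functions to see that $\Pic$ of the formal completion along a fiber is exhausted by exceptional data; the rational singularities hypothesis is exactly what forces $H^1$ of the fiber's structure sheaf (and its infinitesimal thickenings) to vanish, so no "continuous" Picard classes survive. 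Once $N^1(Y/X)_\Q = \bigoplus_i \Q[E_i]$, Proposition~\ref{prp:ses} immediately yields $\Cl_{num}(X)_\Q = 0$, hence $X$ is numerically $\Q$-factorial, and in particular every numerically $\Q$-Cartier divisor is $\Q$-Cartier. I would then remark that this argument in fact proves the stronger statement that rational singularities are numerically $\Q$-factorial, from which the theorem as stated follows a fortiori.
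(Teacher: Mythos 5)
The paper itself does not reprove this theorem; it cites~\cite[Thm.~5.11]{BdFFU13}. Your proposal, however, contains a genuine gap. The claim at its core --- that for $X$ with rational singularities, $N^1(Y/X)_\Q$ is spanned by the exceptional prime divisors $[E_i]$, i.e.\ that $\Cln(X)_\Q = 0$, i.e.\ that $X$ is numerically $\Q$-factorial --- is simply false. The affine quadric threefold cone $X = C_a(\P^1\times\P^1, \O(1,1))$ has rational (even canonical Gorenstein) singularities, yet blowing up the vertex gives a resolution $f\!: Y \to X$ with a single exceptional divisor $E\cong\P^1\times\P^1$ and relative Picard number $\rho(Y/X)=2$ (the two rulings of $E$ give independent classes in $N_1(Y/X)_\Q$), so $\dim\Cln(X)_\Q = 1$ by Proposition~\ref{prp:ses}. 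Concretely, a ruling $D_0\subset\P^1\times\P^1$ is not numerically proportional to $\O(1,1)$, so $C(D_0)$ is not numerically $\Q$-Cartier by Example~\ref{exm:cones}; localizing does not help, since the singularity is isolated.

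Even setting that aside, your final inference is a non sequitur: numerical $\Q$-factoriality (Definition~\ref{dfn:num Q-fact}) says that every divisor is numerically $\Q$-Cartier; it does \emph{not} say that numerically $\Q$-Cartier divisors are $\Q$-Cartier. The paper's own Example~\ref{exm:more cones} exhibits numerically $\Q$-factorial varieties on which $K_X$ (automatically numerically $\Q$-Cartier there) is not $\Q$-Cartier, so that implication fails outright. What the theorem actually requires is the statement you gesture at in the middle of your argument --- that an $f$-numerically trivial $\Q$-divisor on $Y$ becomes, after clearing denominators and passing to a multiple, $\Q$-linearly equivalent over $X$ to an exceptional divisor --- and this is a statement about the \emph{kernel} of the map from divisor classes on $Y$ (modulo pullbacks from $X$) to $N^1(Y/X)_\Q$, not about the image of $\bigoplus_i\Q[E_i]$ inside $N^1(Y/X)_\Q$. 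Your discussion of $R^1f_*\O_Y=0$, the exponential sequence, and formal functions is in the right spirit for establishing that kernel statement; but by recasting it as an assertion about $N^1(Y/X)_\Q$, you have collapsed precisely the distinction between numerical and $\Q$-linear equivalence relative to $f$ that is the content of the theorem.
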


\begin{proof}
See~\cite[Thm.~5.11]{BdFFU13}.
\end{proof}

\begin{rem}
In Theorem~\ref{thm:ratl} it is sufficient to assume that $X$ has \emph{1-rational singularities}, meaning that $R^1 f_* \O_{\wt X} = 0$ for some (equivalently, any) resolution $f\!: \wt X \to X$. This can be seen from the proof of~\cite[Thm.~5.11]{BdFFU13}.
\end{rem}

\section{Non-\Q-Gorenstein multiplier ideals} \label{sec:non-Q-Gor mult ideals}

Following~\cite{dFH09}, we recall in this section how to associate a multiplier ideal $\cJ(X, cZ)$ to a pair $(X, Z)$ and a real number $c \ge 0$.
If $\mf a \subset \O_X$ is the ideal sheaf of the subscheme $Z$, the multiplier ideal may also be denoted by $\cJ(X, \mf a^c)$.

\subsection{The classical case}

Classically, one defines multiplier ideals under some \Q-Gorenstein condition. More precisely, assume that a boundary $\Delta$ on $X$ is given, i.e.~the divisor $K_X + \Delta$ is \Q-Cartier. Let $f\!: Y \to X$ be a joint log resolution of $(X, \Delta)$ and $\mf a$, and write $\mf a \cdot \O_Y = \O_Y(-D)$ with $D$ an effective Cartier divisor. Then one defines
\[ \cJ((X, \Delta), cZ) := f_* \O_Y( \rp K_Y - f^*(K_X + \Delta) - cD. ). \]
One checks that this definition is independent of the choice of log resolution. See~\cite[Def.~9.3.60]{Laz04b} for more details.

\subsection{The general case}

Let $(X, Z)$ be a pair and $c \ge 0$ a real number. Fix a canonical divisor $K_X$ on $X$. For any natural number $m \ge 1$, consider a joint log resolution $f\!: Y \to X$ of $Z \subset X$ and $\O_X(-mK_X) \subset \cK_X$. Note that $f$ may heavily depend on $m$. We define the \emph{$m$-th limiting multiplier ideal sheaf} of $(X, cZ)$ as
\[ \cJ_m(X, cZ) := f_* \O_Y( \rp K_{Y/X, m} - cD. ) \subset \O_X, \]
where $D = f^{-1}(Z)$. This coherent ideal sheaf is independent of the choice of $K_X$ and $f$.
For any $k, m > 0$, one has an inclusion $\cJ_m(X, cZ) \subset \cJ_{km}(X, cZ)$. By the Noetherian property of $\O_X$, it follows that the set $\{ \cJ_m(X, cZ) \}_{m \ge 1}$ has a unique maximal element.
We define the \emph{multiplier ideal sheaf} $\cJ(X, cZ)$ to be this maximal element. We have $\cJ(X, cZ) = \cJ_m(X, cZ)$ for $m$ sufficiently divisible. If $K_X$ is \Q-Cartier, then this definition of multiplier ideal agrees with the classical one. More precisely, we have $\cJ((X, 0), cZ) = \cJ_m(X, cZ)$ as soon as $mK_X$ is Cartier.

By~\cite[Prop.~4.9]{dFH09}, we have that $\cJ(X, cZ) \subset \cJ(X, c'Z)$ for $c' < c$ and that $\cJ(X, cZ) = \cJ(X, (c + \eps) Z)$ for $\eps > 0$ sufficiently small, depending on $c$. Hence the following definition makes sense.

\begin{dfn}[Jumping numbers] \label{dfn:jumping numbers}
A positive real number $\xi > 0$ is called a \emph{jumping number} of the pair $(X, Z)$ if
\[ \cJ(X, \xi Z) \subsetneq \cJ(X, \lambda Z) \quad \text{for all $0 \le \lambda < \xi$}. \]
We denote the set of jumping numbers of $(X, Z)$ by $\Xi(X, Z) \subset \R^+$.

\end{dfn}

\subsection{Compatible boundaries}

The following definition~\cite[Def.~5.1]{dFH09} serves to relate the general case just described to the classical one.

\begin{dfn}[Compatible boundary] \label{dfn:comp bd}
Let $(X, Z)$ be a pair, and fix an integer $m \ge 2$. A boundary $\Delta$ on $X$ is said to be \emph{$m$-compatible} for $(X, Z)$ if there exists a canonical divisor $K_X$ on $X$ and a joint log resolution $f\!: Y \to X$ of $(X, \Delta)$, $Z \subset X$ and $\O_X(-mK_X)$ such that:
\begin{enumerate}
\item the divisor $m\Delta$ is integral, and $\rd \Delta. = 0$,
\item no component of $\Delta$ is contained in the support of $Z$, and
\item the equality $K_{Y/X}^\Delta = K_{Y/X, m}$ holds.
\end{enumerate}
\end{dfn}

The point of this definition is the following observation.

\begin{prp}[Realizing multiplier ideals as classical ones] \label{prp:realizing}
Let $(X, Z)$ be a pair and $c \ge 0$ a real number. Choose an integer $m$ such that $\cJ(X, cZ) = \cJ_m(X, cZ)$, and let $\Delta$ be an $m$-compatible boundary for $(X, Z)$. Then we have
\[ \cJ(X, cZ) = \cJ((X, \Delta), cZ). \]
\end{prp}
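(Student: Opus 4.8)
The plan is to reduce the multiplier ideal $\cJ(X, cZ)$, which is defined as a limit of the $m$-th limiting multiplier ideals, to a single $m$-th limiting object, and then to identify that object with the classical multiplier ideal of the log pair $(X, \Delta)$ by comparing the two relative canonical divisors on a common log resolution. The first step is purely bookkeeping: by hypothesis we chose $m$ so that $\cJ(X, cZ) = \cJ_m(X, cZ)$, so it suffices to prove $\cJ_m(X, cZ) = \cJ((X, \Delta), cZ)$.

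The key point is the existence, guaranteed by Definition~\ref{dfn:comp bd}, of a joint log resolution $f\!: Y \to X$ of $(X, \Delta)$, $Z$, and $\O_X(-mK_X)$ satisfying the three listed properties. Since $f$ is a joint log resolution of all the relevant data, it may be used to compute both $\cJ_m(X, cZ)$ and $\cJ((X, \Delta), cZ)$ (both multiplier ideals are independent of the chosen log resolution, so we are free to use this particular $f$). Writing $\mf a\cdot\O_Y = \O_Y(-D)$ with $D = f^{-1}(Z)$ an effective Cartier divisor, we have by definition
\[ \cJ_m(X, cZ) = f_*\O_Y\bigl(\rp K_{Y/X, m} - cD.\bigr), \qquad \cJ((X,\Delta), cZ) = f_*\O_Y\bigl(\rp K^\Delta_{Y/X} + f^{-1}_*\Delta - cD.\bigr), \]
where in the second expression I have unwound $\rp K_Y - f^*(K_X+\Delta) - cD. = \rp K^\Delta_{Y/X} - f^{-1}_*\Delta - cD.$ using $K^\Delta_{Y/X} = K_Y + f^{-1}_*\Delta - f^*(K_X+\Delta)$; note $f^{-1}_*\Delta$ is an integral divisor because $m\Delta$ is integral with $\rd\Delta. = 0$ forces... actually here I must be slightly careful, so let me instead just keep the classical multiplier ideal in the form $f_*\O_Y(\rp K_Y - f^*(K_X+\Delta) - cD.)$.

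The heart of the argument is condition~(3) of $m$-compatibility, $K^\Delta_{Y/X} = K_{Y/X, m}$, together with the fact that the classical multiplier ideal can be written as $f_*\O_Y(\rp K^\Delta_{Y/X} - \{f^{-1}_*\Delta\} - \lfloor f^{-1}_*\Delta\rfloor - cD.)$; since no component of $\Delta$ lies in $\supp Z$ (condition~(2)) and $\rd\Delta.=0$ (condition~(1)), the strict transform $f^{-1}_*\Delta$ has snc support meeting $D$ properly and, after rounding, one checks that the contribution of $\Delta$ disappears from the round-up — more precisely the relevant identity is $\rp K_Y - f^*(K_X+\Delta) - cD. = \rp K_{Y/X,m} - cD.$, which follows by substituting $K^\Delta_{Y/X} = K_{Y/X,m}$ and observing that $\rp\,-\,f^{-1}_*\Delta + (\text{round-up data}). = \rp\,\cdot\,.$ because $f^{-1}_*\Delta$ has coefficients in $[0,1)$ supported away from the other components. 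Hence the two sheaves being pushed forward agree, so their pushforwards agree, giving $\cJ_m(X,cZ) = \cJ((X,\Delta),cZ)$ and therefore $\cJ(X,cZ) = \cJ((X,\Delta),cZ)$.

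I expect the main obstacle to be the careful treatment of the round-up in the presence of the boundary term $f^{-1}_*\Delta$: one must verify that $\rp K_Y - f^*(K_X+\Delta) - cD.$ really equals $\rp K_{Y/X,m} - cD.$ as divisors, not merely up to an effective correction, and this requires using all three compatibility conditions — integrality of $m\Delta$ and $\rd\Delta.=0$ to control the coefficients of $f^{-1}_*\Delta$, the condition that $\Delta$ avoids $\supp Z$ so that $f^{-1}_*\Delta$ and $D$ share no components, and the snc hypothesis built into "joint log resolution" so that rounding interacts well component by component. Everything else — independence of the log resolution, coherence, the reduction from $\cJ$ to $\cJ_m$ — is standard and already recorded in the excerpt.
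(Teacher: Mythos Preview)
The paper does not prove this proposition itself; it simply cites \cite[Prop.~5.2]{dFH09}. Your argument is the correct one and is essentially what appears in that reference: use the joint log resolution $f$ from Definition~\ref{dfn:comp bd} to compute both ideals, rewrite $K_Y - f^*(K_X+\Delta) = K_{Y/X}^\Delta - f^{-1}_*\Delta = K_{Y/X,m} - f^{-1}_*\Delta$ via condition~(3), and then check that the $f^{-1}_*\Delta$ term disappears upon rounding up.

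One point you assert but should make explicit is \emph{why} $f^{-1}_*\Delta$ is ``supported away from the other components''. Condition~(2) handles $D$, but you also need that $f^{-1}_*\Delta$ shares no component with $K_{Y/X,m}$. This holds because $K_{Y/X,m}$ is $f$-exceptional: indeed $f_* K_Y = K_X$ and $f_* f^\ntl(mK_X) = mK_X$ (the natural pullback agrees with the strict transform off the exceptional locus), so $f_* K_{Y/X,m} = 0$. Since the components of $f^{-1}_*\Delta$ are non-exceptional strict transforms, they meet neither $\supp K_{Y/X,m}$ nor $\supp D$, and then $\rp K_{Y/X,m} - cD - f^{-1}_*\Delta. = \rp K_{Y/X,m} - cD. + \rp -f^{-1}_*\Delta. = \rp K_{Y/X,m} - cD.$ because the coefficients of $f^{-1}_*\Delta$ lie in $[0,1)$ by condition~(1). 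With this clarification your proof is complete; the snc hypothesis is not actually needed for the rounding step (rounding is componentwise regardless), only for the independence-of-resolution statements you invoke.
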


\begin{proof}
See~\cite[Prop.~5.2]{dFH09}.
\end{proof}

Of course, the usefulness of this notion depends on the existence of compatible boundaries. This was established in~\cite[Thm.~5.4]{dFH09}. The proof given there is constructive and yields the following more precise result, which we record for later use.

\begin{thm}[Existence of compatible boundaries] \label{thm:ex comp bd}
Let $(X, Z)$ be a pair, and let $m \ge 2$ be a natural number. Choose an effective Weil divisor $D$ on $X$ such that $K_X - D$ is Cartier, and let $\sL \in \Pic X$ be a line bundle such that $\sL(-mD) := \sL \tensor \O_X(-mD)$ is globally generated. Pick a finite-dimensional subspace $V \subset H^0(X, \sL(-mD))$ that generates $\sL(-mD)$, and let $M$ be the divisor of a general element of $V$. Then
\[ \Delta := \frac{1}{m} M \]
is an $m$-compatible boundary for $(X, Z)$. \qed
\end{thm}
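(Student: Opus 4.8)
The plan is to verify conditions (1)--(3) of Definition~\ref{dfn:comp bd} for the divisor $\Delta := \frac1m M$, where $M$ is the divisor of a general element of the generating subspace $V \subset H^0(X, \sL(-mD))$. Condition (1) is essentially immediate: by construction $m\Delta = M$ is an integral effective divisor, and since $M$ is the zero divisor of a general section it is reduced (indeed, for a general member of a basepoint-free linear system the divisor is reduced, in fact generically smooth by Bertini away from the base locus, which here is empty), so $\rd \Delta. = \rd \frac1m M. = 0$ as long as $m \ge 2$. For condition (2), note that $Z$ has only finitely many (or at worst a proper closed family of) associated/irreducible components of its support; since $V$ generates $\sL(-mD)$, by a Bertini-type general position argument no component of the general divisor $M$ is contained in $\supp Z$, so no component of $\Delta$ is either.

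The substance is in condition (3), the equality $K_{Y/X}^\Delta = K_{Y/X,m}$ on a suitable joint log resolution. First I would fix a canonical divisor $K_X$ and choose $D$ effective with $K_X - D$ Cartier (possible since $K_X$ is Weil), so that $mK_X = m(K_X - D) + mD$ and hence $\O_X(-mK_X) = \O_X(-mD) \tensor \O_X(-m(K_X-D))$, where the second factor is a line bundle. The key observation is that $\O_X(-mK_X)\cdot\O_Y$ and $(\text{Cartier line bundle})\tensor(\mf b \cdot \O_Y)$ where $\mf b$ is the image ideal of $V$ generating $\sL(-mD)$: on the blow-up resolving $\mf b$, the divisorial part of $\mf b\cdot\O_Y$ is the fixed part of the linear system $|V|$ pulled back, and for a \emph{general} $M \in |V|$ one has $f^{-1}_*M$ disjoint from the relevant exceptional and boundary divisors, with $f^*M = f^{-1}_* M + (\text{fixed divisor coming from } \mf b)$. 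Taking a joint log resolution $f\colon Y \to X$ of $(X,\Delta)$, of $Z$, and of $\O_X(-mK_X)$ simultaneously (which exists by the cited resolution results, after possibly refining for the general $M$), one computes $f^\ntl(mK_X)$ in terms of $f^*$ of the Cartier part plus the fixed divisor of $f^*\mf b$, and compares with $f^*(K_X+\Delta) = f^*(K_X - D) + f^*(D + \frac1m M)$. Because $\frac1m M$ exactly ``absorbs'' the fixed/base contribution that distinguishes $\frac1m f^\ntl(mK_X)$ from an honest pullback, the relative canonical divisors agree. This is precisely the computation carried out in the proof of~\cite[Thm.~5.4]{dFH09}, and my argument would follow that template, being careful to track that the genericity of $M$ guarantees the log resolution of $(X,\Delta)$ is compatible with the one resolving $\O_X(-mK_X)$.

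I expect condition (3) to be the main obstacle — more precisely, the bookkeeping that identifies the exceptional divisor extracted by resolving the fractional ideal $\frac1m\O_X(-mK_X)$ with the one extracted by $f^*\bigl(D + \frac1m M\bigr)$, and checking this holds on a \emph{single} joint resolution rather than on potentially different ones. The genericity of $M$ is what makes this work: one first fixes a log resolution $g\colon Y_0 \to X$ of $Z$, of $\O_X(-mK_X)$, and of the base ideal $\mf b$ of $V$; on $Y_0$ the pullback $g^*M$ for general $M$ has strict transform meeting $\Exc(g)$ transversally and avoiding all the triple intersections, so that $\Exc(g) \cup g^{-1}_*\Delta \cup \supp(g^\ntl(mK_X)\text{-divisor}) \cup g^{-1}(Z)$ still has snc support, and no further blow-up is needed. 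Everything else (conditions (1), (2), independence of choices, the final ``$\Delta$ is $m$-compatible'' conclusion via Definition~\ref{dfn:comp bd}) is then routine. Since the statement is explicitly attributed to the constructive proof of~\cite[Thm.~5.4]{dFH09}, I would present this as a careful rereading of that proof that extracts the stated explicit form, rather than as an independent argument.
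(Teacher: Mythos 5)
Your proposal is correct and, as you note at the end, is essentially a careful unpacking of the constructive proof of \cite[Thm.~5.4]{dFH09}; this is exactly what the paper itself does, recording the theorem without further argument beyond the remark that the cited proof is constructive. The crux --- decomposing $mK_X = m(K_X - D) + mD$ with $K_X - D$ Cartier, noting that the Cartier summand pulls back trivially under $f^\ntl$, and using genericity of $M$ so that $f^\ntl(M) = f^{-1}_* M$ on a log resolution of $\O_X(-mK_X)$ fixed in advance --- is what you correctly single out as condition~(3), and your treatment of it matches the intended argument.
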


It follows that for any pair $(X, Z)$ and $c \ge 0$, the set of ideal sheaves
\[ \big\{ \cJ((X, \Delta), cZ) \;\big|\; \text{$\Delta$ a boundary on $X$ in the sense of Section \ref{sec:notation}} \big\} \]
has a unique maximal element, namely $\cJ(X, cZ)$.
This reduces some, but by no means all, questions about multiplier ideals to the classical case. Roughly speaking, as long as one is dealing with only finitely many values of $(X, Z)$ and $c$, one may choose the number $m$ sufficiently divisible such that $\cJ = \cJ_m$ for all those pairs, and then one picks an $m$-compatible boundary. In general, however, it is hard to tell where the sequence of limiting multiplier ideals stabilizes. Even if the pair $(X, Z)$ is fixed, given a collection of infinitely many values of $c$, the required $m$'s might become arbitrarily large. One then needs to consider infinitely many different compatible boundaries. If instead one tries to work directly with the definitions, one needs to consider infinitely many resolutions of $X$.

However, if $K_X$ is numerically \Q-Cartier, we can circumvent these difficulties thanks to the following theorem.

\begin{thm}[Multiplier ideals on numerically \Q-Gorenstein varieties] \label{thm:mult ideals num Q-Gor}
Let $(X, Z)$ be a pair, where $K_X$ is numerically \Q-Cartier. Choose a log resolution $f\!: \wt X \to X$ of $(X, Z)$. Then for any $t > 0$ we have
\[ \cJ(X, tZ) = f_* \O_{\wt X} \big( \rp K_{\wt X/X}^- - t \cdot f^{-1}(Z). \big). \]
\end{thm}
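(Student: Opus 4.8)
The plan is to reduce to the stabilization $\cJ(X,tZ) = \cJ_m(X,tZ)$ for $m$ sufficiently divisible, and to show that for such $m$ the right-hand side of the asserted formula already computes $\cJ_m(X,tZ)$, independently of the resolution. First I would fix a log resolution $f\colon\wt X\to X$ of $(X,Z)$, write $f^{-1}(Z)=D$ for the resulting Cartier divisor, and set $\cF_m := f_*\O_{\wt X}(\rp K_{\wt X/X,m} - tD.)$ and $\cF^- := f_*\O_{\wt X}(\rp K_{\wt X/X}^- - tD.)$, the right-hand side of the theorem. Since $K_X$ is numerically \Q-Cartier, Remark~\ref{rem:Knum ratl} tells us that $f^*K_X$, and hence $K_{\wt X/X}^- = K_{\wt X} - f^*K_X$, is a genuine \Q-divisor; so the round-up appearing in $\cF^-$ makes sense. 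The two main points to establish are: (i) $\cF^-$ does not depend on the choice of $f$, and (ii) $\cF^- = \cJ(X,tZ)$.

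For (i), if $f'\colon\wt X'\to X$ is another log resolution of $(X,Z)$, I would dominate both by a common log resolution $g\colon W\to X$ factoring as $W\xrightarrow{p}\wt X$ and $W\xrightarrow{p'}\wt X'$. On $W$ one has $K_{W/X}^- = K_W - g^*K_X$ and $K_{\wt X/X}^- = K_{\wt X} - f^*K_X$, and pulling back gives $p^*(K_{\wt X/X}^-) = K_W - p^*K_{\wt X} - g^*K_X + \dots$; the key identity is $K_{W/X}^- = K_{W/\wt X} + p^*(K_{\wt X/X}^-)$, which follows because $g^*K_X = p^*f^*K_X$ (functoriality of the pullback for the numerically \Q-Cartier divisor $K_X$, again using Remark~\ref{rem:Knum ratl} so that everything is \Q-Cartier after a single blow-up). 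Since $K_{W/\wt X}$ is effective and $p$-exceptional, the standard round-up/pushforward argument — identical to the one for classical multiplier ideals in \cite[9.2.18]{Laz04b} — yields $p_*\O_W(\rp K_{W/X}^- - g^{-1}(Z).) = \O_{\wt X}(\rp K_{\wt X/X}^- - f^{-1}(Z).)$, and pushing forward to $X$ gives the independence. Here I would have to be slightly careful that $g^{-1}(Z) = p^*f^{-1}(Z)$, which holds because $f^{-1}(Z)$ is already Cartier on $\wt X$.

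For (ii), fix $m$ with $\cJ(X,tZ) = \cJ_m(X,tZ)$. Given the resolution $f$ above, it is also a joint log resolution of $Z\subset X$ and $\O_X(-mK_X)$ after possibly one further blow-up (which, by part (i), does not change $\cF^-$), so $\cJ_m(X,tZ) = \cF_m$. Since $K_{\wt X/X,m}\le K_{\wt X/X}^-$ coefficientwise, we get $\cF_m\subset\cF^-$, hence $\cJ(X,tZ)\subset\cF^-$. For the reverse inclusion I would use the same estimate in the limit: $K_{\wt X/X}^- = \lim_m K_{\wt X/X,m}$, and because the $K_{\wt X/X,m}$ increase to $K_{\wt X/X}^-$, for $m$ sufficiently divisible the round-ups $\rp K_{\wt X/X,m} - tD.$ and $\rp K_{\wt X/X}^- - tD.$ coincide away from a set on which, anyway, the difference does not affect the pushforward — more precisely, since $K_{\wt X/X}^-$ is a fixed \Q-divisor, there is an $m_0$ such that $\rp K_{\wt X/X,m} - tD. = \rp K_{\wt X/X}^- - tD.$ for all $m$ divisible by $m_0$, giving $\cF^- = \cF_m = \cJ_m(X,tZ) = \cJ(X,tZ)$.

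The main obstacle I anticipate is the precise bookkeeping in part (i): one must verify $g^*K_X = p^*f^*K_X$ and $g^{-1}(Z) = p^*f^{-1}(Z)$ and then the functoriality $K_{W/X}^- = K_{W/\wt X} + p^*K_{\wt X/X}^-$ of relative canonical divisors in this non-\Q-Gorenstein but numerically \Q-Cartier setting; each of these is where the hypothesis on $K_X$ is genuinely used, via Remark~\ref{rem:Knum ratl} turning $\R$-coefficients into \Q-coefficients so that the classical resolution-independence machinery applies verbatim. The rest is a routine transcription of the argument that shows classical multiplier ideals are well-defined.
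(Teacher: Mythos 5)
The paper's own ``proof'' of this statement is simply a citation to \cite[Thm.~1.3]{BdFFU13}, so your attempt is aiming at a self-contained argument that the paper does not actually carry out. Your step (i) — resolution independence of $\cF^- := f_*\O_{\wt X}(\rp K_{\wt X/X}^- - tD.)$ via the identity $K_{W/X}^- = K_{W/\wt X} + p^*K_{\wt X/X}^-$ — is correct: since $K_X$ is numerically \Q-Cartier, $g^*K_X$ is the unique $g$-numerically trivial \Q-divisor pushing forward to $K_X$, and $p^*f^*K_X$ has this property by the projection formula, so $g^*K_X = p^*f^*K_X$, and the classical round-up--pushforward argument does the rest. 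The inclusion $\cJ(X,tZ) \subset \cF^-$ (your first half of (ii)) is likewise fine, using $K_{Y/X,m} \le K_{Y/X}^-$ and step (i) on a joint log resolution dominating $\wt X$.

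The gap is in the reverse inclusion $\cF^- \subset \cJ(X,tZ)$. Your argument: pick $m$ large so that on the \emph{fixed} $\wt X$ the round-ups $\rp K_{\wt X/X,m} - tD.$ and $\rp K_{\wt X/X}^- - tD.$ coincide (fine, since $K_{\wt X/X,m}$ increases to the \Q-divisor $K_{\wt X/X}^-$ along divisibility), whence $\cF^- = \cF_m$; then you assert $\cF_m = \cJ_m(X,tZ)$. This last equality is not justified. The ideal $\cJ_m(X,tZ)$ is computed on a joint log resolution $g\colon Y \to X$ of $Z$ \emph{and} $\O_X(-mK_X)$, which in general must dominate $\wt X$ strictly via some $p\colon Y \to \wt X$. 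One has $\O_X(-mK_X)\cdot\O_Y \subset p^*\O_{\wt X}(-f^\ntl(mK_X))$ (pullback of the reflexive hull contains the image ideal), hence $g^\ntl(mK_X) \ge p^*f^\ntl(mK_X)$ and therefore $K_{Y/X,m} \le K_{Y/\wt X} + p^*K_{\wt X/X,m}$, possibly strictly on $p$-exceptional divisors. Pushing forward gives only the inclusion $\cJ_m(X,tZ) \subset \cF_m$, which is the direction you already have — not the equality you need. Your parenthetical fix (``after possibly one further blow-up, which by part (i) does not change $\cF^-$'') does not close the gap: part (i) controls $\cF^-$, not $\cF_m$, and on the new resolution $Y$ one would need the round-up stabilization $\rp K_{Y/X,m} - tg^{-1}(Z). = \rp K_{Y/X}^- - tg^{-1}(Z).$, which requires $m$ large \emph{for that particular} $Y$ — but $Y$ itself was chosen depending on $m$. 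This circularity between $m$ and the resolution is precisely the substantive difficulty that the $b$-divisorial / nef-envelope machinery of \cite{BdFFU13} is designed to resolve, and your proposal does not contain a replacement for it.
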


\begin{proof}
See~\cite[Thm.~1.3]{BdFFU13}.
\end{proof}

\section{Elementary properties of jumping numbers}

The aim of the present section is to prove Proposition~\ref{prp:elem}. So let $(X, Z)$ be a pair. We denote the ideal sheaf of $Z$ by $\mf a \subset \O_X$.

\subsection{Nonemptiness}

Pick a canonical divisor $K_X$ on $X$ and a number $m$ such that $\cJ(X, \emptyset) = \cJ_m(X, \emptyset)$. If $f\!: Y \to X$ is a joint log resolution of $(X, Z)$ and $\O_X(-mK_X)$, then
\[ \cJ(X, \emptyset) = f_* \O_Y( \rp K_{Y/X, m}. ). \]
As $Z \ne \emptyset$, we have $\mf a \cdot \O_Y = \O_Y(-D)$, where $D$ is a nonzero effective Cartier divisor on $Y$.
Pick $x \in \supp(Z)$ arbitrarily, and let $E \subset \supp(D)$ be a prime divisor with $x \in f(E)$. Take any nonzero function germ $h \in \cJ(X, \emptyset)_x$. Then for $\xi \gg 0$, the coefficient of $E$ in $-\rp K_{Y/X}^- - \xi D.$ will be larger than the order of vanishing of $f^* h$ along $E$.
Fix such a number $\xi$. Let $\ell$ be such that $\cJ(X, \xi Z) = \cJ_\ell(X, \xi Z)$. Let $g\!: \wt Y \to X$ be a joint log resolution of $(X, Z)$ and $\O_X(-\ell K_X)$ that factors through $f$. Then
\[ \cJ(X, \xi Z) = g_* \O_{\wt Y}( \rp K_{\wt Y/X, \ell} - \xi D. ). \]
As $K_{\wt Y/X, \ell} \le K_{\wt Y/X}^-$, we see that $g^* h$ is not contained in $\O_{\wt Y}( \rp K_{\wt Y/X, \ell} - \xi D. )$ at the generic point of the strict transform of $E$ on $\wt Y$. Hence $h$ is not contained in $\cJ(X, \xi Z)_x$.
It follows that the inclusion $\cJ(X, \xi Z) \subsetneq \cJ(X, \emptyset)$ is strict, and so there must be a jumping number of the pair $(X, Z)$ in the interval $(0, \xi]$.

\subsection{Unboundedness}

Let $\xi \in \Xi(X, Z)$ be a jumping number. Spelled out explicitly, this means that for any $0 < \eps \le \xi$ there is a point $x \in X$ and a function $h \in \O_{X,x}$, both depending on $\eps$, such that $h \in \cJ(X, (\xi - \eps)Z)_x$, but $h \not\in \cJ(X, \xi Z)_x$.

Fix an arbitrary $0 < \eps \le \xi$. We regard a germ $g \in \mf a_x$ as a regular function on a small neighborhood $U$ of $x$. For any proper birational morphism $f\!: Y \to X$ such that $Y$ is normal and $\mf a \cdot \O_Y = \O_Y(-D)$ is invertible, we may then write
\[ \dv(f^* g) = D|_V + M_g, \]
where $V = f^{-1}(U)$ and $M_g$ is an effective divisor on $V$.
Here we take the divisor of $f^* g$ as a function, not as a section of $\O_Y(-D)$.

Choose a finite set of generators for the ideal $\mf a_x = \langle g_1, \dots, g_r \rangle$ in the noetherian ring $\O_{X,x}$. Then after possibly shrinking $U$, the sheaf $\O_Y(-D)|_V$ will be generated by the sections $f^* g_1, \dots, f^* g_r$.
It follows that if $g = \sum_{i=1}^r \lambda_i g_i \in \mf a_x$ is a general $\C$-linear combination of the chosen generators, then $M_g$ and $D|_V$ do not have any common components.

Now fix such a general $g \in \mf a_x$. As $h \in \cJ_m(X, (\xi - \eps)Z)_x$ for some $m$, we get
\[ g \cdot h \in \cJ_m(X, (\xi + 1 - \eps)Z)_x \subset \cJ(X, (\xi + 1 - \eps)Z)_x \]
from the definitions. On the other hand, we will show that $g \cdot h \not\in \cJ(X, (\xi + 1)Z)_x$. Proceeding by contradiction, assume that $g \cdot h \in \cJ_m(X, (\xi + 1)Z)_x$ for some $m$. Since $M_g$ and $D|_V$ do not share any common components, it follows that
\[ h \in \cJ_m(X, \xi Z)_x \subset \cJ(X, \xi Z)_x, \]
contradicting the definition of $h$. So $\cJ(X, (\xi + 1)Z) \subsetneq \cJ(X, (\xi + 1 - \eps)Z)$. As $\eps$ was chosen arbitarily, this proves that $\xi + 1$ is a jumping number.

The second statement of~(\ref{prp:elem}.\ref{itm:elem.unb}) follows from what we have just proved, combined with~(\ref{prp:elem}.\ref{itm:elem.emp}).

\subsection{DCC property}

Any decreasing sequence $\xi_1 > \xi_2 > \cdots$ contained in $\Xi(X, Z)$ would give rise to a strictly ascending chain of coherent ideal sheaves
\[ \cJ(X, \xi_1 Z) \subsetneq \cJ(X, \xi_2 Z) \subsetneq \cdots, \]
which is impossible by the Noetherian property of $\O_X$. For the second statement, assuming it were false we can easily construct a decreasing sequence as above.

\subsection{Countability}

Consider the following subset of $\R$:
\[ A := \big\{ a \in \R \;\big|\; \text{$\Xi(X, Z) \cap [0, a]$ is countable} \big\}. \]
As $0 \in A$, this set is nonempty. If $(a_n) \subset A$ is a sequence converging to some number $a$, it is immediate that also $a \in A$. Hence $A$ is closed. By the second part of~(\ref{prp:elem}.\ref{itm:elem.dcc}), the set $A$ is open. It follows that $A = \R$, and then $\Xi(X, Z) = \bigcup_{n \in \N} \big( \Xi(X, Z) \cap [0, n] \big)$ is countable.

\begin{rem}
Of course, the proof just given has nothing to do with jumping numbers, and it shows quite generally that any DCC set is countable.
\end{rem}

\subsection{Periodicity}

By~(\ref{prp:elem}.\ref{itm:elem.unb}), it suffices to show that if $\xi > \dim X - 1$ is not a jumping number, then also $\xi + 1$ is not a jumping number.
So we assume that $\cJ(X, (\xi - \eps)Z) = \cJ(X, \xi Z)$ for sufficiently small $\eps > 0$, and we need to show $\cJ(X, (\xi + 1 - \eps)Z) = \cJ(X, (\xi + 1)Z)$ for small $\eps > 0$.

\begin{clm}[Skoda's theorem] \label{clm:Skoda}
For any $c \ge \dim X - 1$, we have
\[ \cJ(X, (c + 1)Z) = \mf a \cdot \cJ(X, cZ). \]
\end{clm}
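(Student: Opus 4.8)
The plan is to reduce to the classical version of Skoda's theorem by means of compatible boundaries. The classical Skoda theorem (see~\cite[Thm.~9.6.31]{Laz04b}) states that for a log pair $(X, \Delta)$ with $K_X + \Delta$ \Q-Cartier, an ideal sheaf $\mf a$ with $\dim X$-th power generated locally by $\dim X$ elements (or, more simply, for any $\mf a$ once the exponent exceeds $\dim X - 1$), one has $\cJ((X, \Delta), (c+1)Z) = \mf a \cdot \cJ((X, \Delta), cZ)$ whenever $c \ge \dim X - 1$. So the essential point is to transport this identity through the identification $\cJ(X, cZ) = \cJ((X, \Delta), cZ)$ furnished by Proposition~\ref{prp:realizing}.

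Here is how I would carry it out. First, choose an integer $m$ that is simultaneously large enough (sufficiently divisible) that $\cJ(X, cZ) = \cJ_m(X, cZ)$ \emph{and} $\cJ(X, (c+1)Z) = \cJ_m(X, (c+1)Z)$; this is possible because the stabilization index for finitely many values of the exponent can be taken common, by the Noetherian property. Next, invoke Theorem~\ref{thm:ex comp bd} to produce a single boundary $\Delta$ that is $m$-compatible for $(X, Z)$. Then Proposition~\ref{prp:realizing} gives
\[
\cJ(X, cZ) = \cJ((X, \Delta), cZ), \qquad \cJ(X, (c+1)Z) = \cJ((X, \Delta), (c+1)Z).
\]
Now apply the classical Skoda theorem to the log pair $(X, \Delta)$: since $c \ge \dim X - 1$, we get $\cJ((X, \Delta), (c+1)Z) = \mf a \cdot \cJ((X, \Delta), cZ)$. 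Combining the three displayed equalities yields $\cJ(X, (c+1)Z) = \mf a \cdot \cJ(X, cZ)$, as desired. One should double-check that the classical statement is available in the form needed here, namely for an arbitrary proper subscheme $Z$ with ideal $\mf a$ (not necessarily a divisor) and a boundary $\Delta$ whose coefficients may exceed one; the version in~\cite[Sec.~9.6.C]{Laz04b} covers pairs and arbitrary ideals, and the exponent hypothesis $c \ge \dim X - 1$ is exactly what makes the division step in the proof of Skoda's theorem go through regardless of the number of generators.

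The main obstacle, such as it is, is bookkeeping rather than mathematics: one has to make sure that the \emph{same} $m$ (hence the same compatible boundary $\Delta$) can be used to realize both $\cJ(X, cZ)$ and $\cJ(X, (c+1)Z)$ as classical multiplier ideals, since Proposition~\ref{prp:realizing} only guarantees the identification $\cJ = \cJ_m = \cJ((X,\Delta), \cdot)$ for the particular exponent whose stabilization index is $m$. Because we only need finitely many exponents, taking $m$ sufficiently divisible handles this. An alternative, if one prefers to avoid even this much care, is to work on a fixed log resolution: choosing $m$ so that $\cJ = \cJ_m$ for both exponents, one can run the standard proof of Skoda's theorem (restricting a general element of $\mf a$ and using that its divisor shares no component with $f^{-1}(Z)$, exactly as in the Unboundedness argument above) directly with $K_{Y/X,m}$ in place of $K_Y - f^*(K_X+\Delta)$; the inequality $c \ge \dim X - 1$ enters in the same way. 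Either route gives the claim quickly.
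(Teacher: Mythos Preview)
Your proposal is correct and follows essentially the same route as the paper: choose a single $m$ that works for both exponents $c$ and $c+1$, pick an $m$-compatible boundary $\Delta$ via Theorem~\ref{thm:ex comp bd}, identify both multiplier ideals with the classical ones via Proposition~\ref{prp:realizing}, and apply the classical Skoda theorem. The only point the paper makes more explicit is that the version of Skoda in~\cite[Thm.~9.6.21]{Laz04b} is literally stated for $X$ smooth, $\Delta = 0$, and $c$ rational, so one must note (as in~\cite[Rem.~9.6.23 and after Var.~9.6.39]{Laz04b}) that the proof goes through for log pairs and real exponents upon replacing the relative vanishing input by~\cite[Thm.~9.4.17]{Laz04b}; since here $c$ is an arbitrary real number, this extension is genuinely needed.
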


\begin{proof}[Proof of Claim~\ref{clm:Skoda}]
Fix a number $m$ such that $\cJ(X, cZ) = \cJ_m(X, cZ)$ and $\cJ(X, (c + 1)Z) = \cJ_m(X, (c + 1)Z)$, and choose an $m$-compatible boundary $\Delta$ for the pair $(X, Z)$. We then have
\[ \cJ(X, cZ) = \cJ((X, \Delta), cZ) \quad \text{and} \quad
\cJ(X, (c + 1)Z) = \cJ((X, \Delta), (c + 1)Z) \]
by Proposition~\ref{prp:realizing}. The claim now follows from Skoda's theorem~\cite[Thm.~9.6.21.ii)]{Laz04b}, asserting that
\[ \cJ((X, \Delta), (c + 1)Z) = \mf a \cdot \cJ((X, \Delta), cZ). \]
Note that~\cite[Thm.~9.6.21]{Laz04b} is only applicable if $X$ is smooth, $\Delta = 0$, and $c$ is rational. However, as explained in Remark~9.6.23 and after Variant~9.6.39 of~\cite{Laz04b}, the statement remains true in the generality required here. Indeed, upon replacing the relative vanishing of~\cite[Variant~9.4.4]{Laz04b} by Thm.~9.4.17 of that book, the proof of Skoda's theorem goes through verbatim, even if $c$ is not rational.
\end{proof}

Returning to the proof of~(\ref{prp:elem}.\ref{itm:elem.per}), for $\eps > 0$ sufficiently small Claim~\ref{clm:Skoda} yields
\[ \cJ(X, (\xi + 1 - \eps)Z) = \mf a \cdot \cJ(X, (\xi - \eps)Z) = \mf a \cdot \cJ(X, \xi Z) = \cJ(X, (\xi + 1)Z), \]
finishing the proof of~(\ref{prp:elem}.\ref{itm:elem.per}) and the whole Proposition~\ref{prp:elem}. \qed

\section{Resolutions of sheaves} \label{sec:sheaf resol}

As a preparation for the proof of Theorem~\ref{thm:discrete isol}, we recall here the notion of resolution morphism for a coherent sheaf (not to be confused with resolutions in the sense of homological algebra) and a positivity property of such resolutions which will prove crucial for us.

The construction presented here is essentially due to Rossi~\cite[Thm.~3.5]{Ros68} in the more general context of coherent analytic sheaves on complex spaces. However, as the positivity property mentioned above is not addressed in~\cite{Ros68}, we have chosen to include here a full account of the construction of sheaf resolutions (in the algebraic case). Our proof is somewhat simpler than the original argument. Furthermore, it answers a question of Rossi~\cite[p.~72]{Ros68} asking for a universal property of his construction.

\begin{dfn}[Resolution of a sheaf] \label{dfn:resol sheaf}
Let $\sF$ be a coherent sheaf on a normal variety $X$. A \emph{resolution} of $\sF$ is a proper birational morphism $f\!: Y \to X$ such that $Y$ is normal and $\factor{f^* \sF}{\tors}$ is locally free.
A resolution $f$ of $\sF$ is called \emph{minimal} if every other resolution of $\sF$ factors through $f$.
\end{dfn}

Note that a resolution of a sheaf $\sF$ on $X$ will usually not be a resolution of singularities for $X$. Of course, the minimal resolution of $\sF$ is unique if it exists.

\begin{thm}[Existence of resolutions] \label{thm:exist resol}
Let $X$ be a normal variety and $\sF$ a coherent sheaf on $X$. Then the minimal resolution $f\!: Y \to X$ of $\sF$ exists. Furthermore, if $\sF$ has rank one, then $\factor{f^* \sF}{\tors}$ is an $f$-ample invertible sheaf.
\end{thm}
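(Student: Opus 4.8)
The existence of a resolution of $\sF$ is essentially a standard blowup-of-Fitting-ideals argument, while the $f$-ampleness in the rank-one case is the new point that requires care. I would organize the proof in three parts: (1) construct \emph{some} resolution, (2) identify the minimal one via a universal property, (3) prove $f$-ampleness for rank one.

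\textbf{Step 1: Existence of a resolution.} Given $\sF$ of generic rank $r$, consider the $r$-th Fitting ideal sheaf $\sI := \operatorname{Fitt}_r(\sF) \subset \O_X$ (or more precisely, one resolves the sheaf $\bigwedge^r \sF$ modulo torsion, whose double dual is invertible). Let $g\!: X' \to X$ be the normalized blowup of $X$ along a suitable Fitting ideal, so that $\factor{g^* \sF}{\tors}$ becomes locally free on $X'$; this is the classical statement that blowing up Fitting ideals (equivalently, flattening by Rossi--Hironaka--Raynaud--Gruson) makes a coherent sheaf locally free modulo torsion. Normalizing $X'$ if necessary, we obtain a resolution in the sense of Definition~\ref{dfn:resol sheaf}. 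This establishes that the class of resolutions of $\sF$ is nonempty.

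\textbf{Step 2: Minimal resolution.} To produce the minimal resolution, I would take the ``join'' of all resolutions. Concretely, since $\factor{f^*\sF}{\tors}$ locally free means $f$ factors through the relative $\operatorname{Grassmannian}$ bundle $\Gr_r(\sF) \to X$ parametrizing rank-$r$ locally free quotients of $\sF$ — more precisely through the closure $B$ of the locus over the maximal open set $U \subset X$ where $\sF|_U$ is already locally free (where $\Gr_r(\sF)|_U = U$). Let $Y$ be the normalization of this closure $B \subset \Gr_r(\sF)$. Then $\factor{f^*\sF}{\tors}$ on $Y$ is the pullback of the tautological quotient bundle, hence locally free, so $Y \to X$ is a resolution; and any resolution $h\!: Z \to X$ induces (by the universal property of $\Gr$, applied to the locally free quotient $h^*\sF \surj \factor{h^*\sF}{\tors}$) a map $Z \to \Gr_r(\sF)$ landing in $B$, which after normalization factors through $Y$. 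This gives the required factorization property, answering Rossi's question about a universal property, and uniqueness of the minimal resolution is then formal.

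\textbf{Step 3: $f$-ampleness in the rank-one case.} Now assume $r = 1$. Here $\Gr_1(\sF)$ is the projectivization $\P(\sF)$ (Grothendieck convention: $\operatorname{Proj}$ of the symmetric algebra), and $Y$ is the normalization of the closure of $U$ inside $\P(\sF)$, i.e.\ $Y$ is the normalized blowup of $X$ along (the torsion-free quotient of) a Fitting ideal $\mf a$ of $\sF$. The tautological quotient line bundle $\O_{\P(\sF)}(1)$ restricted to the blowup is $\O(-E)$ where $E$ is the exceptional Cartier divisor, and $\factor{f^*\sF}{\tors} \cong f^*\!\left(\factor{\mf a}{\tors}\right)\cdot(\text{line bundle}) = \O_Y(-E)\tensor(\text{pullback of a line bundle})$. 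The point is that $\O_Y(-E)$, being the tautological bundle on a (normalized) blowup, is relatively ample over $X$ — this is the standard fact that $\mathcal{O}_{\mathrm{Bl}}(-E)$ is $f$-ample for a blowup along an ideal sheaf. Twisting by a pullback does not affect $f$-ampleness, so $\factor{f^*\sF}{\tors}$ is $f$-ample. The main obstacle in writing this cleanly is keeping track of the twist: one must verify that $\factor{f^*\sF}{\tors}$ differs from $\O_Y(-E)$ only by the pullback of an invertible sheaf on $X$, which requires checking that $\sF$ and its Fitting ideal $\mf a$ agree up to a line bundle twist — true locally because any rank-one coherent sheaf modulo torsion on a normal variety is, after removing a codimension-$\ge 2$ set, a fractional ideal, and $f^\ntl$ of a fractional ideal on the blowup is $\pm E$ up to an integral divisor; globally one patches these local statements. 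I expect this bookkeeping, rather than any deep input, to be where the argument needs the most attention.
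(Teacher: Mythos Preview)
Your approach is essentially the same as the paper's: both construct the minimal resolution as the normalization of the closure of the graph of the tautological map to a Grassmannian, and deduce $f$-ampleness in rank one from the relative ampleness of the tautological $\O(1)$. The only real difference is packaging. The paper works locally with a presentation $\O_X^{\oplus p}\surj\sF$, takes the closure of the graph of the resulting rational map $X\dashrightarrow\Gr(p,r)$, and then argues that uniqueness forces the local constructions to glue; you instead use the relative Grassmannian $\Gr_r(\sF)\to X$ from the outset, which is cleaner and avoids the gluing step. Your Step~1 (Fitting ideals, Raynaud--Gruson) is redundant, since Step~2 already produces a resolution directly.

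Your Step~3, however, is more convoluted than it needs to be, and the ``bookkeeping'' you flag as the main obstacle is a non-issue. Once you have identified $\factor{f^*\sF}{\tors}$ with the pullback of the tautological quotient bundle on $\Gr_1(\sF)=\P(\sF)$ (which you already did in Step~2), $f$-ampleness is immediate: $\O_{\P(\sF)}(1)$ is ample relative to $X$ by the very construction of $\mathrm{Proj}$, and relative ampleness is preserved under restriction to closed subschemes and pullback along the finite normalization map. There is no need to pass through Fitting ideals, exceptional divisors, or to verify that $\factor{f^*\sF}{\tors}$ differs from $\O_Y(-E)$ by a pulled-back line bundle. This is exactly how the paper argues (with $\P^{p-1}$ playing the role of your $\P(\sF)$), and it is also the content of the paper's alternative remark for rank one via normalized blowups of fractional ideals.
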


\begin{proof}
First we construct the required resolution locally. After shrinking $X$, we may assume that there is a surjection $\alpha\!: \O_X^{\oplus p} \surj \sF$ for some integer $p$. Let $X^\circ$ be the open subset of $X$ where $\sF$ is locally free, say of rank $r$. Then $\alpha$ determines a morphism $F\!: X^\circ \to G := \Gr(p, r)$ into the Grassmannian of $r$-dimensional quotients of $\C^p$. We may view $F$ as a rational map $X \dashrightarrow G$.

Let $\pi\!: \wt X \to X$ be a resolution of indeterminacy for $F$, i.e.~a blowup such that $F$ extends to a morphism $\phi\!: \wt X \to G$.
\[ \xymatrix{
\wt X \ar_\pi[d] \ar^\phi[drr] & & \\
X \ar@{-->}^F[rr] & & G
} \]
Let $\sO_G^{\oplus p} \surj \sE$ be the tautological quotient bundle on $G$. Then we have a surjection $\pi^* \sF \surj \phi^* \sE$. It is an isomorphism on $\pi^{-1}(X^\circ)$, hence it induces an isomorphism $\factor{\pi^* \sF}{\tors} \isom \phi^* \sE$. In particular, $\factor{\pi^* \sF}{\tors}$ is locally free.
Conversely, if $\pi\!: \wt X \to X$ is a blowup such that $\factor{\pi^* \sF}{\tors}$ is locally free, then the surjection
\[ \sO_{\wt X}^{\oplus p} \xrightarrow{\pi^* \alpha} \pi^* \sF \lto \factor{\pi^* \sF}{\tors} \]
provides a morphism $\wt X \to G$ that extends $F$.

Now let $Y' \subset X \x G$ be the closure of the graph of $F$, and let $Y$ be the normalization of $Y'$.
\[ \xymatrix{
Y \ar^\nu[r] \ar@/_/_f[dr] & Y' \ar_{\pr_1}[d] \ar^{\pr_2}[drr] & & \\
& X \ar@{-->}^F[rr] & & G
} \]
Then $f = \pr_1 \circ\, \nu\!: Y \to X$ is a resolution of indeterminacy for $F$, and every other such resolution $\pi\!: \wt X \to X$ with $\wt X$ normal factors through $f$.
By what we have observed above, this means that $f$ is a minimal resolution of $\sF$. By uniqueness of the minimal resolution, $f$ does not depend on the surjection $\alpha$ chosen in the beginning. In particular, the local constructions glue to give a globally defined minimal resolution of $\sF$. This proves the first half of the theorem.

For the second statement, assume that $\sF$ has rank one. Since the statement is local, we may again assume that we are in the local situation described above, and we continue to use that notation. Note that since $\sF$ has rank one, $G = \Gr(p, 1) = \P^{p-1}$. So
\[ \factor{\pr_1^* \sF}{\tors} \isom \pr_2^* \O_{\P^{p-1}}(1). \]
It follows immediately that $\factor{\pr_1^* \sF}{\tors}$ is ample (even very ample) on the fibres of $\pr_1$. As ampleness is preserved under finite pullbacks, pulling back everything to the normalization $Y$ of $Y'$ we get that $\factor{f^* \sF}{\tors}$ is ample on the fibres of $f$. By~\cite[Thm.~1.7.8]{Laz04a}, this implies the $f$-ampleness of $\factor{f^* \sF}{\tors}$.
\end{proof}

\begin{rem} \label{rem:alternative}
If $\sF$ has rank one (which is the only case we shall need), an alternative approach is as follows. Replacing $\sF$ by $\factor{\sF}{\tors}$, we may assume that $\sF$ is torsion-free. Then $\sF$ is isomorphic to a fractional ideal sheaf $\mf a \subset \cK_X$. Let $f\!: Y \to X$ be the normalization of the blowing-up of $\mf a$, which can be defined in exactly the same fashion as the blowing-up of an ordinary ideal sheaf~\cite[Ch.~II, Sec.~7]{Har77}. Then the assertions we need to prove follow directly from the analogues of~\cite[Ch.~II, Props.~7.14, 7.10]{Har77}.
\end{rem}

\begin{rem}
If $\sF$ has rank $r \ge 2$, then there is no resolution $f$ of $\sF$ such that $\factor{f^* \sF}{\tors}$ is an $f$-ample vector bundle. For the minimal resolution, this can be seen from the proof of Theorem~\ref{thm:exist resol} and the fact that the tautological quotient bundle on the Grassmannian $\Gr(p, r)$ is not ample if $r \ge 2$~\cite[Ex.~6.1.6]{Laz04b}. As any resolution factors through the minimal one, the assertion follows.
\end{rem}

\begin{rem}
It is clear that both the proof of Theorem~\ref{thm:exist resol} as well as the alternative approach outlined in Remark~\ref{rem:alternative} work over an algebraically closed field of arbitrary characteristic.
\end{rem}

\section{Global generation of Weil divisorial sheaves}

The purpose of the present section is to prove the following theorem, which is a special case of Conjecture~\ref{conj:glob gen} and generalizes the previously known special case~\cite[Prop.~5.5]{Urb12}, where $X$ is required to have isolated singularities.

\begin{thm}[Global generation for isolated non-\Q-Cartier loci] \label{thm:glob gen isol}
Let $X$ be a normal projective variety and $D$ a Weil divisor on $X$, with non-\Q-Cartier locus $W \subset X$.
Define the open set $U \subset X$ as the complement of $W$ union the isolated points of $W$.
Then there is an ample Cartier divisor $H$ on $X$ such that for $m \in \N$ sufficiently divisible, the sheaf $\O_X(m(D + H))$ is globally generated on $U$.
\end{thm}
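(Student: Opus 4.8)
The plan is to reduce the global generation statement on $U$ to a statement about finitely many prime divisors lying over the isolated points of $W$, and there to use numerical $\Q$-Cartierness. First I would recall that on $X \minus W$ the divisor $D$ is $\Q$-Cartier, so there is nothing to prove away from the isolated points of $W$; the content is entirely local around each isolated point $w_1, \dots, w_s$ of $W$. Fix one such point $w = w_j$. The idea is that, even though $D$ itself is not $\Q$-Cartier near $w$, one can control the failure by passing to a model. Concretely, I would take the minimal resolution $f\!: Y \to X$ of the rank-one sheaf $\sF = \O_X(D)$ provided by Theorem~\ref{thm:exist resol}; then $\sM := \factor{f^*\O_X(D)}{\tors}$ is an $f$-ample invertible sheaf on the normal variety $Y$. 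Writing $\sM = \O_Y(D')$ for a Cartier divisor $D'$ on $Y$ with $f_* D' = D$, the $f$-ampleness of $\O_Y(D')$ means $D'$ is relatively positive; I would combine this with the fact that $w$ is an \emph{isolated} point of $W$ to argue that $f^{-1}(w)$ is (after possibly shrinking) proper and that over a neighbourhood of $w$ the morphism $f$ contracts a projective fibre over which $D'$ restricts to an ample divisor.

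Next I would use $f$-ampleness globally. Since $\O_Y(D')$ is $f$-ample and $f$ is proper birational, for an ample Cartier divisor $H$ on $X$ with $H$ sufficiently positive (depending only on $D$, not on $m$), the divisor $D' + f^* H$ is ample on $Y$: this is the standard fact~\cite[Thm.~1.7.8]{Laz04a} that a relatively ample divisor plus the pullback of a sufficiently ample divisor from the base is ample. Hence for $m$ sufficiently divisible (so that $mD'$ and $mf^*H$ are both honest integral divisors and $m(D'+f^*H)$ is very ample), the sheaf $\O_Y(m(D' + f^* H))$ is globally generated on all of $Y$. Pushing forward, I would claim
\[
f_* \O_Y\big(m(D' + f^* H)\big) = \O_X\big(m(D+H)\big)
\]
after possibly enlarging the class of $m$; the inclusion $\subset$ into the reflexive hull $\O_X(m(D+H))$ is automatic since $f_*$ of a divisorial sheaf is reflexive and agrees with $\O_X(m(D+H))$ in codimension one (as $f_* D' = D$), and the reverse inclusion is where the hypothesis that $w$ is isolated in $W$ enters: away from $w$ the sheaf $\O_X(D)$ is already locally free so $f$ is an isomorphism there in codimension-controlled fashion, and at $w$ itself, which is a point, the depth/normality of $\O_X(m(D+H))$ forces the two sheaves to coincide since they agree outside a codimension $\ge 2$ subset.

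Finally, global generation passes through: if $\phi\!: H^0(Y, \sN) \tensor \O_Y \surj \sN$ is surjective and $f_* \sN = \sG$, then applying $f_*$ and using $H^0(Y,\sN) = H^0(X, \sG)$ gives that $H^0(X, \sG) \tensor \O_X \to \sG$ has image a subsheaf of $\sG$ containing $f_*\sN = \sG$ on the locus where things are well-behaved; more carefully, global generation of $\sN$ at every point of $f^{-1}(x)$ implies global generation of $f_*\sN$ at $x$ for $x$ in the locus where $f_*\sN$ behaves like a pushforward, and here that locus contains $U$. So $\O_X(m(D+H))$ is globally generated on $U$, as desired. The main obstacle I anticipate is the bookkeeping in the second and third steps: verifying that one \emph{single} ample $H$ and one cofinal set of $m$ work simultaneously for all isolated points $w_1, \dots, w_s$, and pinning down precisely why the pushforward $f_*\O_Y(m(D'+f^*H))$ equals the reflexive sheaf $\O_X(m(D+H))$ rather than some smaller subsheaf — this is exactly the point at which the isolatedness of the non-$\Q$-Cartier locus is indispensable, since at a non-isolated (positive-dimensional) part of $W$ the fibre of $f$ need not be contracted to a point and the pushforward can genuinely fail to be reflexive or can differ from $\O_X(m(D+H))$.
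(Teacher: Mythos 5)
Your first step matches the paper exactly: pass to the minimal resolution $f\!:Y\to X$ of $\O_X(D)$ (Theorem~\ref{thm:exist resol}), obtaining an $f$-ample invertible sheaf $\O_Y(D')$ with $f_*D'=D$, and then twist by a sufficiently ample $H$ to make $D'+f^*H$ globally ample on $Y$. After that, however, there are two genuine gaps.

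\emph{First gap: the pushforward identity fails.} You assert $f_*\O_Y\big(m(D'+f^*H)\big)=\O_X\big(m(D+H)\big)$, arguing that both agree in codimension one and hence coincide by reflexivity. But $f_*\O_Y(mD')$ is in general \emph{not} reflexive: it is the subsheaf of $\cK_X$ cut out by valuation conditions along all prime divisors of $Y$, including the $f$-exceptional ones, and these exceptional conditions are genuinely extra because they do not correspond to codimension-one points of $X$. (The simplest example is $f_*\O_Y(-E)=\mf m_x\subsetneq\O_X=\mf m_x^{**}$ for the blowup of a smooth surface point.) So one only has $f_*\O_Y(m(D'+f^*H))\subsetneq\O_X(m(D+H))$, and you need an extra argument to propagate global generation from the subsheaf to the reflexive hull. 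Isolatedness of $W$ does not make the two sheaves equal; rather, it makes the cokernel of this inclusion supported on $W$, and at isolated points of $W$ the cokernel is automatically globally generated, which is what allows a four-lemma argument (the paper's Lemma~\ref{lem:ext glob gen}) to lift global generation to the reflexive hull, provided one also knows $H^1$ of the subsheaf vanishes.

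\emph{Second gap: global generation does not push forward.} Even granting the identity, applying $f_*$ to a surjection $H^0(Y,\sN)\otimes\O_Y\surj\sN$ does not give a surjection $H^0(X,f_*\sN)\otimes\O_X\to f_*\sN$, since $f_*$ is only left exact; the obstruction lives in $R^1f_*$ of the kernel. Your phrase about ``the locus where $f_*\sN$ behaves like a pushforward'' does not identify or control this obstruction. The paper avoids pushing forward global generation entirely: it works with the sheaf $\sF_m=f_*\O_Y(mB)\otimes\O_X(m\wt H)$ directly on $X$, proves $H^i(X,\sF_m\otimes\O_X(\ell\wt H))=0$ for $i>0$, $\ell\ge 0$, $m\gg0$ via Fujita vanishing on $Y$ combined with relative Serre vanishing and the Leray spectral sequence, and then invokes Castelnuovo--Mumford regularity to get both global generation of $\sF_m$ and the needed $H^1(X,\sF_m)=0$ for the four-lemma step. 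You will need some vanishing-plus-regularity mechanism of this kind to make your sketch into a proof.
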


The proof proceeds along the general lines of~\cite{Urb12}, except that instead of passing to a resolution of $X$ and using Kawamata--Viehweg vanishing, we only resolve the sheaf $\O_X(D)$ (in the sense of Section~\ref{sec:sheaf resol}) and use Fujita vanishing.
In particular, all the key ingredients of our proof (the others being relative Serre vanishing and Castelnuovo--Mumford regularity) remain valid in arbitrary characteristic. Hence we see that Theorem~\ref{thm:glob gen isol} is also true in positive characteristic, as one might expect.

\begin{rem}
In Conjecture~\ref{conj:glob gen}, the conclusion holds for all $m \in \N$ and not just for sufficiently divisible $m$. This should also be true of Theorem~\ref{thm:glob gen isol}. We have not paid attention to this, as it is not needed for our purposes and would only complicate the proof.
\end{rem}

Before starting the proof, we record one auxiliary lemma.

\begin{lem}[Extensions of globally generated sheaves] \label{lem:ext glob gen}
Let $X$ be a projective variety, and let
\[ 0 \lto \sF' \lto \sF \lto \sF'' \lto 0 \]
be a short exact sequence of coherent sheaves on $X$. Assume that $\sF'$ is globally generated and that $H^1(X, \sF') = 0$. If $x \in X$ is any point such that $\sF''$ is globally generated at $x$, then so is $\sF$.
\end{lem}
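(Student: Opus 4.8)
The plan is to chase the long exact sequence in cohomology associated to the given short exact sequence, using the vanishing of $H^1(X, \sF')$ together with the two global generation hypotheses. The point is that global generation of a coherent sheaf $\sG$ at a point $x$ is equivalent to the surjectivity of the evaluation map $H^0(X, \sG) \otimes \C \to \sG \otimes_{\O_X} \C(x) = \sG(x)$, where $\C(x) = \O_{X,x}/\mf m_x$ is the residue field at $x$ and $\sG(x) := \sG \otimes_{\O_X} \C(x)$ is the fibre. So the statement to be proven is purely about this evaluation square.

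First I would write down the commutative diagram with exact rows
\[ \xymatrix{
H^0(X, \sF') \otimes \C \ar[r] \ar[d] & H^0(X, \sF) \otimes \C \ar[r] \ar[d] & H^0(X, \sF'') \otimes \C \ar[r] \ar[d] & 0 \\
\sF'(x) \ar[r] & \sF(x) \ar[r] & \sF''(x) \ar[r] & 0
} \]
Here the top row is exact on the right because $H^1(X, \sF') = 0$ forces $H^0(X, \sF) \to H^0(X, \sF'')$ to be surjective, and $- \otimes_\C \C$ is exact. The bottom row is exact because $- \otimes_{\O_X} \C(x)$ is right-exact. The left-hand vertical map is surjective since $\sF'$ is globally generated (hence globally generated at $x$ in particular), and the right-hand vertical map is surjective by hypothesis on $\sF''$ at $x$. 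A routine diagram chase (a version of the five lemma / four lemma) then shows the middle vertical map is surjective, which is exactly what we want: given $v \in \sF(x)$, push it to $\sF''(x)$, lift to a global section of $\sF''$, lift that to a global section $s$ of $\sF$ using surjectivity of $H^0(X,\sF) \to H^0(X,\sF'')$; the difference between $v$ and the image of $s$ lives in the image of $\sF'(x)$, which is hit by $H^0(X, \sF') \otimes \C$, and we absorb it.

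I would make sure to phrase the equivalence "globally generated at $x$ $\iff$ evaluation map to the fibre is surjective" carefully, since it is the only non-formal input; it follows from Nakayama's lemma applied to the stalk $\sF_x$ as an $\O_{X,x}$-module. One mild subtlety worth a sentence: the cokernel of $H^0(X, \sF') \otimes \O_X \to \sF$ need not itself be $\sF''$, so I want to argue at the level of fibres at the single point $x$ rather than trying to identify sheaves globally; working fibrewise sidesteps this entirely. There is no real obstacle here — the lemma is an exactness bookkeeping statement — so the only thing to be careful about is not overclaiming (we get global generation of $\sF$ at $x$, not on all of $X$, and $\sF$ need not satisfy $H^1 = 0$), and not invoking any projectivity beyond what is needed to ensure the cohomology groups are honest finite-dimensional vector spaces (which is not even required for the argument, but is harmless to keep).
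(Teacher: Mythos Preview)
Your proof is correct and follows essentially the same approach as the paper: write down the commutative ladder coming from the long exact sequence in cohomology (using $H^1(X,\sF')=0$ for exactness on the right) and invoke the four lemma. The only cosmetic difference is that the paper works directly with stalks $H^0(\sF^\bullet)\otimes\O_{X,x}\to\sF^\bullet_x$, whereas you pass to fibres $\sF^\bullet(x)$ and use Nakayama to translate back; both are fine.
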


\begin{proof}
We have the following diagram with exact rows:
\[ \xymatrix{
0 \ar[r] & H^0(\sF') \tensor \O_{X, x} \ar[r] \ar@{->>}[d] & H^0(\sF) \tensor \O_{X, x} \ar[r] \ar[d] & H^0(\sF'') \tensor \O_{X, x} \ar[r] \ar@{->>}[d] & 0 \\
0 \ar[r] & \sF'_x \ar[r] & \sF_x \ar[r] & \sF''_x \ar[r] & 0.
} \]
The outer vertical maps are surjective. By the four lemma, so is the middle one.
\end{proof}

\subsection{Proof of Theorem~\ref{thm:glob gen isol}}

For convenience of the reader, the proof is divided into four steps.

\subsubsection*{Step 1: Blowing up}

Let $\wt D = N_0 \cdot D$ be the smallest positive multiple of $D$ which is Cartier outside of $W$, and let $\wt H = N_0 \cdot H$ for some very ample Cartier divisor $H$ on $X$.
Let $f\!: Y \to X$ be the minimal resolution of $\O_X(\wt D)$. Then $f$ is an isomorphism outside of $W$, and
\[ \O_Y(B) := \factor{f^* \O_X(\wt D)}{\tors} \]
is an $f$-ample invertible sheaf by Theorem~\ref{thm:exist resol}. Note that we are free to replace $D$ by $D + N_1 \cdot \wt H$ for any $N_1 > 0$. This does not change $N_0$ and $f$, and it changes $B$ to $B + f^*(N_0 N_1 \cdot \wt H)$. Hence by~\cite[Prop.~1.45]{KM98}, we may assume that the Cartier divisor $B$ is globally ample on $Y$.

\subsubsection*{Step 2: Vanishing theorems}

By Fujita vanishing~\cite[Thm.~1.4.35]{Laz04a}, we have
\[ H^i(Y, \O_Y(mB + P)) = 0 \qquad \text{for $i > 0$, $m \gg 0$ and any nef divisor $P$ on $Y$.} \]
Furthermore, by relative Serre vanishing~\cite[Thm.~1.7.6]{Laz04a}
\[ R^i f_* \O_Y(mB) = 0 \qquad \text{for $i > 0$ and $m \gg 0$.} \]
It now follows from the projection formula for higher direct images~\cite[Ch.~III, Ex.~8.3]{Har77} and the Leray spectral sequence associated to the map $f$ and the sheaf $\O_Y(mB + \ell \cdot f^* \wt H)$ that
\[ H^i(X, f_* \O_Y(mB) \tensor \O_X(\ell \wt H)) = 0 \qquad \text{for $i > 0$, $m \gg 0$ and $\ell \ge 0$.} \]
By Castelnuovo--Mumford regularity~\cite[Thm.~1.8.5]{Laz04a}, for $m \gg 0$ the sheaf
\[ \sF_m := f_* \O_Y(mB) \tensor \O_X(m \wt H) \]
is globally generated and satisfies $H^1(X, \sF_m) = 0$.

\subsubsection*{Step 3: Pushing down}

Observe that $\sF_m$ is torsion-free and that its reflexive hull $\sF_m^{**}$ is isomorphic to $\O_X(m(\wt D + \wt H))$. Indeed, both sheaves are reflexive and they agree outside of $W$, which has codimension at least two in $X$.
Thus the natural map $\sF_m \to \sF_m^{**}$ yields a short exact sequence
\[ 0 \lto \sF_m \lto \O_X(m(\wt D + \wt H)) \lto \cQ_m \lto 0, \]
where $\cQ_m$ is supported on $W$. We are aiming to show that the middle term $\O_X(m(\wt D + \wt H))$ is globally generated on $U$.
So let $x \in U$ be arbitrary. Then either $x \not\in W$, whence the stalk $\cQ_{m,x}$ is zero, or $x \in W$ is isolated, hence so is $x \in \supp \cQ_m$. In either case, $\cQ_m$ is globally generated at $x$.
By Lemma~\ref{lem:ext glob gen}, also $\O_X(m(\wt D + \wt H))$ is globally generated at $x$. Since $x \in U$ was arbitrary, it follows that $\O_X(m(\wt D + \wt H))$ is globally generated on $U$.

\subsubsection*{Step 4: End of proof}

We have shown that there is an $m_0 \in \N$ such that for $m \ge m_0$, the sheaf $\O_X(m(\wt D + \wt H))$ is globally generated on $U$. Since $m(\wt D + \wt H) = m N_0 \cdot (D + H)$, this proves the claim of the theorem if we take ``$m$ sufficiently divisible'' to mean ``$m$ divisible by $m_0 N_0$''. \qed

\section{Discreteness for isolated non-\Q-Gorenstein loci}

This section is devoted to the proof of Theorem~\ref{thm:discrete isol}, repeated here for the reader's convenience.

\begin{thm}[Discreteness for isolated non-\Q-Gorenstein loci] \label{thm:discrete isol'}
Let $(X, Z)$ be a pair such that the non-\Q-Cartier locus of $K_X$ is zero-dimensional. Then $\Xi(X, Z)$ is a discrete subset of $\R$.
\end{thm}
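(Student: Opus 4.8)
The strategy is to run the argument sketched in the introduction: assume for contradiction that $\Xi(X,Z)$ has an accumulation point $t_0$, produce a strictly decreasing chain of multiplier ideals $\cJ(X,t_1Z)\supsetneq\cJ(X,t_2Z)\supsetneq\cdots$ with $t_k\to t_0$, and derive a contradiction by twisting by a well-chosen ample line bundle and taking global sections. The two essential inputs are Theorem~\ref{thm:glob gen isol} (global generation of Weil divisorial sheaves away from an isolated non-\Q-Cartier locus) and Theorem~\ref{thm:mult ideals num Q-Gor} (a clean formula for multiplier ideals on a log resolution when $K_X$ is numerically \Q-Cartier). First I would reduce to the case where $X$ is quasi-projective: discreteness is a local question on $X$ in the sense that $\Xi(X,Z)$ is determined by the stalks of the $\cJ(X,\lambda Z)$, so one may pass to a quasi-projective open subset; alternatively one works affine-locally and notes that an accumulation point of $\Xi(X,Z)$ would be an accumulation point of $\Xi(U,Z|_U)$ for some quasi-projective $U$.

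Next I would handle the projective case. Fix a log resolution $f\!:\wt X\to X$ of $(X,Z)$; since the non-\Q-Cartier locus of $K_X$ is zero-dimensional, Theorem~\ref{thm:ratl}-type considerations do not directly apply, so instead I invoke Theorem~\ref{thm:glob gen isol} applied to the Weil divisor $K_X$ (whose non-\Q-Cartier locus $W$ is zero-dimensional, so $U=X\minus W\cup(\text{isolated pts of }W)=X$), yielding an ample Cartier $H$ with $\O_X(m(K_X+H))$ globally generated for $m$ sufficiently divisible. The point of this is to build, for each $\lambda$, a boundary-type twist making $\cJ(X,\lambda Z)\tensor\sL$ globally generated for a \emph{single} ample $\sL$ independent of $\lambda$; concretely, one uses that $\cJ(X,\lambda Z)$ can be computed via an $m$-compatible boundary (Proposition~\ref{prp:realizing}, Theorem~\ref{thm:ex comp bd}) built from a general section of $\O_X(\sL_0(-mK_X+mH))$, and then Nadel-type vanishing for the classical multiplier ideal of the pair $(X,\Delta)$ gives $H^1(X,\cJ((X,\Delta),\lambda Z)\tensor\sA)=0$ and global generation of $\cJ((X,\Delta),\lambda Z)\tensor\sA$ for a fixed sufficiently ample $\sA$, uniformly in $\lambda$ lying in a bounded interval (one uses Theorem~\ref{thm:glob gen isol} precisely to make the twist needed in Theorem~\ref{thm:ex comp bd} independent of the varying $m$). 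Granting such an $\sL$, taking $H^0(X,-\tensor\sL)$ in the strictly decreasing chain $\cJ(X,t_kZ)\supsetneq\cJ(X,t_{k+1}Z)$ gives a strictly decreasing chain of finite-dimensional vector spaces $H^0(X,\cJ(X,t_kZ)\tensor\sL)$, which is absurd.

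For the general quasi-projective case, the subtlety flagged in the introduction is that a projective compactification $\wb X\supset X$ may destroy zero-dimensionality of the non-\Q-Cartier locus. I would get around this exactly as outlined: rather than using $\cJ(\wb X,t_k\wb Z)$ directly, construct ideal sheaves $\cI_k\subset\O_{\wb X}$ with $\cI_k|_X=\cJ(X,t_kZ)$ and with $\sL\tensor\cI_k$ globally generated \emph{on the open set $X$} for a fixed ample $\sL$ on $\wb X$; here one applies Theorem~\ref{thm:glob gen isol} on $\wb X$ to the Weil divisor $K_{\wb X}$ whose non-\Q-Cartier locus $W'$ meets $X$ in a zero-dimensional set, so that the global-generation conclusion holds on $X\cup(\text{isolated pts of }W')\supseteq X$. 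Taking global sections of the $\cI_k\tensor\sL$ on $\wb X$ and restricting to $X$ then still detects the strict inclusions $\cJ(X,t_kZ)\supsetneq\cJ(X,t_{k+1}Z)$, giving the same contradiction. The main obstacle I expect is organizing the uniformity: ensuring that one fixed twist $\sL$ works simultaneously for all $t_k$ in the bounded interval containing $t_0$, which forces careful bookkeeping of how the compatible boundaries, the resolutions, and the Fujita/Castelnuovo--Mumford bounds interact as $m\to\infty$ — and this is exactly the role played by the ``sufficiently divisible $m$'' global generation statement of Theorem~\ref{thm:glob gen isol}, which decouples the ample twist from the varying $m$.
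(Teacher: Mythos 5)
Your proposal follows essentially the same approach as the paper: reduce to the affine case, compactify, use Theorem~\ref{thm:glob gen isol} to build $m$-compatible boundaries whose $\Q$-linear equivalence class is independent of $m$, apply Nadel vanishing and Castelnuovo--Mumford regularity to obtain a single ample twist making all the (compatified) multiplier ideals globally generated on $X$, and take global sections to contradict the existence of an infinite strictly decreasing chain. One small correction: Theorem~\ref{thm:mult ideals num Q-Gor} is not actually an input to this proof (it is used only for Corollary~\ref{cor:gen discrete}), and the global generation theorem should be applied to an \emph{effective} Weil divisor $D$ with $K_X - D$ Cartier rather than to $K_X$ itself, so as to match the hypotheses of Theorem~\ref{thm:ex comp bd}.
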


\subsection{Auxiliary results}

We begin with a few easy observations.

\begin{lem}[Descending chains of ideals] \label{lem:desc chain}
Let $X$ be a projective variety, and let
\[ \cJ_1 \supset \cJ_2 \supset \cdots \]
be a descending chain of coherent ideal sheaves on $X$. Let $U \subset X$ be an open subset. Assume that there exists a line bundle $\sL \in \Pic X$ such that for any $k \ge 1$, the sheaf $\sL \tensor \cJ_k$ is globally generated on $U$. Then the sequence
\[ \cJ_1|_U \supset \cJ_2|_U \supset \cdots \]
stabilizes, i.e.~we have $\cJ_k|_U = \cJ_{k+1}|_U$ for $k \gg 1$.
\end{lem}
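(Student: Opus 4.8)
The statement to prove is Lemma~\ref{lem:desc chain}: given a descending chain of ideal sheaves $\cJ_1 \supset \cJ_2 \supset \cdots$ on a projective variety $X$, an open set $U$, and a line bundle $\sL$ such that each $\sL \tensor \cJ_k$ is globally generated on $U$, the restricted chain $\cJ_1|_U \supset \cJ_2|_U \supset \cdots$ stabilizes. The plan is to play the global generation on $U$ against a Noetherianity/finiteness statement on the compact $X$ obtained by taking global sections.

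**Key steps.** First I would reduce to a statement about global sections. Set $V_k := H^0(X, \sL \tensor \cJ_k)$. Since $\cJ_{k+1} \subset \cJ_k$ we get a descending chain of subspaces $V_1 \supset V_2 \supset \cdots$ of the finite-dimensional vector space $H^0(X, \sL)$ (finite-dimensionality is where projectivity of $X$ enters). A descending chain of subspaces of a finite-dimensional space stabilizes, so $V_k = V_{k+1}$ for all $k \gg 1$; fix such a $k_0$. Second, I would show that for $k \ge k_0$ the stalk $(\sL \tensor \cJ_k)_x$ at any $x \in U$ is independent of $k$. This is exactly where global generation on $U$ is used: the hypothesis says the evaluation map $V_k \tensor_\C \O_{X,x} \to (\sL \tensor \cJ_k)_x$ is surjective, so $(\sL \tensor \cJ_k)_x$ is the $\O_{X,x}$-submodule of $(\sL)_x$ generated by the image of $V_k$. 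Since $V_k = V_{k_0}$ for $k \ge k_0$, this submodule does not depend on $k$; hence $(\sL \tensor \cJ_k)_x = (\sL \tensor \cJ_{k_0})_x$ for all $x \in U$ and $k \ge k_0$. Third, since $\sL$ is invertible, tensoring with $\sL^{-1}$ gives $(\cJ_k)_x = (\cJ_{k_0})_x$ for all $x \in U$, i.e.\ $\cJ_k|_U = \cJ_{k_0}|_U$ for $k \ge k_0$, which is the claim.

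**Main obstacle.** There is no serious obstacle here; the lemma is genuinely elementary once one has set it up. The only point requiring the slightest care is the logical direction of "globally generated on $U$": one must use that the \emph{global} sections of $\sL \tensor \cJ_k$ generate the stalk at each point of $U$, and that these global sections form a chain of subspaces of the fixed finite-dimensional space $H^0(X,\sL)$ — it is the interplay of "few global sections, measured on all of $X$" with "enough global sections to generate, measured only on $U$" that forces stabilization. One should also note that the sheaves $\sL \tensor \cJ_k$ come with natural inclusions into $\sL$ (as $\cJ_k \subset \O_X$), so all the maps $V_k \to V_{k-1} \to \cdots$ and the stalk comparisons are compatible inside $\sL$ and $\sL_x$ respectively; this makes "$V_k = V_{k_0}$" and "equality of stalks" literal equalities of subobjects rather than mere isomorphisms.
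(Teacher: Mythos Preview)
Your proposal is correct and follows essentially the same approach as the paper: both arguments observe that the descending chain $H^0(X,\sL\otimes\cJ_k)$ stabilizes by finite-dimensionality of $H^0(X,\sL)$, then use global generation on $U$ to conclude that $(\sL\otimes\cJ_k)|_U$ stabilizes, and finally untwist by $\sL^{-1}$. The only cosmetic difference is that you phrase the second step stalkwise while the paper draws a single commutative square at the sheaf level.
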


\begin{proof}
The chain of complex vector spaces
\[ H^0(X, \sL \tensor \cJ_1) \supset H^0(X, \sL \tensor \cJ_2) \supset \cdots \]
stabilizes for dimension reasons. So for $k \gg 1$ we have a diagram
\[ \xymatrix{
H^0(X, \sL \tensor \cJ_k) \tensor_\C \O_U \ar@{->>}[d] & H^0(X, \sL \tensor \cJ_{k+1}) \tensor_\C \O_U \ar@{=}[l] \ar@{->>}[d] \\
(\sL \tensor \cJ_k)|_U & (\sL \tensor \cJ_{k+1})|_U. \ar@{ il->}[l]
} \]
This implies $(\sL \tensor \cJ_k)|_U = (\sL \tensor \cJ_{k+1})|_U$, and then $\cJ_k|_U = \cJ_{k+1}|_U$.
\end{proof}

\begin{prp}[Global generation of twisted multiplier ideals] \label{prp:gg mult ideal}
Let $(X, Z)$ be a pair, where $X$ is projective, and let $c \ge 0$ be a real number. Choose a boundary $\Delta$ on $X$ and a Cartier divisor $B$ such that $\O_X(B) \tensor \cJ_Z$ is globally generated. Furthermore let $L$ be a very ample Cartier divisor such that $L - (K_X + \Delta + cB)$ is big and nef. Then for $n \ge \dim X + 1$, the sheaf
\[ \O_X(nL) \tensor \cJ((X, \Delta), cZ) \]
is globally generated.
\end{prp}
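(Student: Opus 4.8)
The plan is to deduce the global generation of $\O_X(nL) \tensor \cJ((X,\Delta), cZ)$ from a Nadel-type vanishing statement via Castelnuovo--Mumford regularity, and to establish that vanishing on a log resolution by Kawamata--Viehweg vanishing, with the global generation hypothesis on $\O_X(B) \tensor \cJ_Z$ used to control the contribution of the subscheme $Z$.

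First I would note that, since $L$ is very ample, by Castelnuovo--Mumford regularity \cite[Thm.~1.8.5]{Laz04a} it is enough to show that $\O_X(nL) \tensor \cJ((X,\Delta), cZ)$ is $0$-regular with respect to $L$, i.e.~that $H^i(X, \O_X((n-i)L) \tensor \cJ((X,\Delta), cZ)) = 0$ for all $i \ge 1$. Since $n - i \ge n - \dim X \ge 1$ whenever $1 \le i \le \dim X$, and cohomology vanishes in degrees $i > \dim X$, this reduces to proving
\[ H^i\big(X, \O_X(kL) \tensor \cJ((X,\Delta), cZ)\big) = 0 \qquad \text{for all } i \ge 1 \text{ and all integers } k \ge 1. \]
As a preliminary observation, for each such $k$ the $\R$-divisor $kL - (K_X + \Delta + cB) = (k-1)L + \big(L - (K_X + \Delta + cB)\big)$ is big and nef, being the sum of a nef divisor and a big and nef one.

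To prove the displayed vanishing I would fix $k \ge 1$, pick a joint log resolution $f\!: Y \to X$ of $(X,\Delta)$ and $Z$, and write $\cJ_Z \cdot \O_Y = \O_Y(-D)$ for an effective Cartier divisor $D$. By the local vanishing theorem for multiplier ideals of log pairs, in the form used in the proof of Claim~\ref{clm:Skoda} (\cite[Thm.~9.4.17]{Laz04b}), one has $R^j f_* \O_Y(\lceil K_Y - f^*(K_X + \Delta) - cD \rceil) = 0$ for $j > 0$; together with the projection formula and the Leray spectral sequence this identifies $H^i(X, \O_X(kL) \tensor \cJ((X,\Delta), cZ))$ with $H^i(Y, \O_Y(K_Y + \lceil G \rceil))$, where $G := f^*(kL) - f^*(K_X + \Delta) - cD$. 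The crucial point is that $G$ is big and nef: numerically $G = f^*\big(kL - (K_X + \Delta + cB)\big) + c\,(f^*B - D)$, the first summand being the pullback along a birational morphism of the big and nef divisor above, and the second a nonnegative multiple of $f^*B - D$, which is nef because $\O_Y(f^*B - D)$ is a quotient of the globally generated sheaf $f^*\big(\O_X(B) \tensor \cJ_Z\big)$. Moreover, since $f^*(kL)$ is integral, the non-integral part of $G$ is supported on $\Exc(f) \cup f^{-1}_*\Delta \cup \supp D$, a simple normal crossings divisor. Hence Kawamata--Viehweg vanishing (in its form for $\R$-divisors, as $c$ need not be rational) yields $H^i(Y, \O_Y(K_Y + \lceil G \rceil)) = 0$ for $i \ge 1$, completing the argument.

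I expect the main obstacle to be the vanishing step on $Y$: one must verify that the term $-cD$ coming from the ideal $\cJ_Z$ can be rewritten numerically as $c(f^*B - D)$ minus $cf^*B$, so that after cancellation $G$ is genuinely big and nef --- this is exactly where the hypothesis on $\O_X(B) \tensor \cJ_Z$ enters --- while at the same time the non-integral part of $G$ stays supported on the simple normal crossings locus of the log resolution, so that Kawamata--Viehweg applies. The reduction via Castelnuovo--Mumford regularity and the descent of cohomology along $f$ are routine.
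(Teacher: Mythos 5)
Your proof is correct and follows essentially the same route as the paper, which invokes Nadel vanishing in the form of \cite[Prop.~9.4.18]{Laz04b} together with Castelnuovo--Mumford regularity. Your argument simply unfolds that Nadel-type vanishing on a log resolution---via local vanishing, the decomposition $G = f^*\big(kL - (K_X+\Delta+cB)\big) + c(f^*B - D)$, and Kawamata--Viehweg---rather than citing the packaged statement directly.
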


\begin{proof}
This is a combination of Nadel vanishing in the form of~\cite[Prop.~9.4.18]{Laz04b} and Castelnuovo--Mumford regularity.
\end{proof}

\subsection{Proof of Theorem~\ref{thm:discrete isol'}}

Again, for the sake of readability the proof is divided into five steps.

\subsubsection*{Step 0: Setup of notation}

We need to show that $\Xi(X, Z)$ is discrete. Arguing by contradiction, assume that $t_0 \in \R$ is an accumulation point. Then there is a sequence $(t_k) \subset \Xi(X, Z) \minus \{ t_0 \}$ converging to $t_0$. By Proposition~\ref{prp:elem}(\ref{itm:elem.dcc}), we may assume that the sequence $(t_k)$ is strictly increasing.

\subsubsection*{Step 1: Simplifying assumptions}

Cover $X$ by finitely many affine open subsets $U_i$. Then clearly
\[ \Xi(X, Z) = \bigcup_i \Xi(U_i, Z|_{U_i}). \]
For some index $i$, the set $\Xi(U_i, Z|_{U_i})$ contains a subsequence of $(t_k)$. Hence we may assume that $X = U_i \subset \A^N$ is affine. Taking the closure of $X$ in $\P^N$ and normalizing yields the following assumption.

\begin{awlog}
The variety $X$ is embedded as an open set $X \subset \wb X$ in a normal projective variety $\wb X$.
\end{awlog}

We denote by $\wb Z$ the closure of $Z$ considered as a locally closed subscheme of $\wb X$.

\subsubsection*{Step 2: Constructing compatible boundaries}

Let $D$ be an effective Weil divisor on $X$ such that $K_X - D$ is Cartier, and let $\wb D$ be its closure in $\wb X$. By our assumptions, the non-\Q-Cartier locus of $D$ is finite. Hence by Theorem~\ref{thm:glob gen isol}, there exists an ample Cartier divisor $H$ on $\wb X$ and a positive integer $m_0$ such that $\O_{\wb X}(m(H - \wb D))$ is globally generated on $X$ for all $m \in m_0 \cdot \N$.
For such $m$, define
\[ \wb\Delta_m := \frac{1}{m} M, \; \text{where $M \in |m(H - \wb D)|$ is a general element.} \]
Then by Theorem~\ref{thm:ex comp bd}, the divisor $\Delta_m := \wb\Delta_m|_X$ is an $m$-compatible boundary for the pair $(X, Z)$.
Note that the \Q-linear equivalence class of $\wb\Delta_m \sim_\Q H - \wb D$ does not depend on $m$.
Note also that $\wb\Delta_m$ need not be the closure of $\Delta_m$, as $\wb\Delta_m$ might have components contained in $\wb X \minus X$.

\subsubsection*{Step 3: Global generation}

Let $B$ be an ample Cartier divisor on $\wb X$ such that $\O_{\wb X}(B) \tensor \cJ_{\wb Z}$ is globally generated.
As we have remarked above, the numerical equivalence class of $K_{\wb X} + \wb\Delta_m$ is independent of $m$. Thus we can find a very ample Cartier divisor $L$ on $\wb X$ such that
\[ L - (K_{\wb X} + \wb\Delta_m + t_0 B) \quad \text{is big and nef for all $m \in m_0 \cdot \N$.} \]
Then also $L - (K_{\wb X} + \wb\Delta_m + t_k B)$ is big and nef for $k \ge 1$.
Fix $n \ge \dim X + 1$. By Proposition~\ref{prp:gg mult ideal},
\[ \O_{\wb X}(nL) \tensor \cJ((\wb X, \wb\Delta_m), t_k \wb Z) \quad \text{is globally generated for all $m \in m_0 \cdot \N$, $k \ge 1$.} \]

\begin{clm} \label{clm:gg}
The sheaf $\O_{\wb X}(nL) \tensor \cJ(\wb X, t_k \wb Z)$ is globally generated on $X$ for all $k \ge 1$.
\end{clm}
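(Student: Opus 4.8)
The plan for establishing Claim~\ref{clm:gg} is to compare, over the open set $X$, the sheaf appearing in the claim with the compatible-boundary multiplier ideal $\cJ((\wb X, \wb\Delta_m), t_k \wb Z)$, for which global generation on all of $\wb X$ has already been proved in Step~3, and then to conclude by a short diagram chase.

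First I would fix $k \ge 1$ and choose the index $m$ with care: take $m \in m_0 \cdot \N$ (with $m_0$ the integer produced in Step~2) sufficiently divisible that both $\cJ_m(X, t_k Z) = \cJ(X, t_k Z)$ and $\cJ_m(\wb X, t_k \wb Z) = \cJ(\wb X, t_k \wb Z)$ hold. This is possible since each equality holds for every sufficiently divisible $m$ (the limiting multiplier ideals increase along divisibility), and the two divisibility conditions together with membership in $m_0 \cdot \N$ can be met simultaneously. For such an $m$, Step~2 says that $\Delta_m = \wb\Delta_m|_X$ is an $m$-compatible boundary for $(X, Z)$, so Proposition~\ref{prp:realizing} yields $\cJ((X, \Delta_m), t_k Z) = \cJ(X, t_k Z)$.

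Next I would invoke the fact that classical multiplier ideals $\cJ((-,-),-)$ and limiting multiplier ideals $\cJ_m(-,-)$ both commute with restriction to an open subset (a log resolution restricts to a log resolution, and $K_{\wb X}|_X = K_X$, $\wb\Delta_m|_X = \Delta_m$, $\wb Z|_X = Z$). Combining this with the identities just obtained gives
\begin{align*}
\cJ\big((\wb X, \wb\Delta_m),\, t_k \wb Z\big)\big|_X
&= \cJ\big((X, \Delta_m),\, t_k Z\big) = \cJ(X, t_k Z) \\
&= \cJ_m(X, t_k Z) = \cJ_m(\wb X, t_k \wb Z)\big|_X = \cJ(\wb X, t_k \wb Z)\big|_X,
\end{align*}
so the two ideal sheaves $\cJ((\wb X, \wb\Delta_m), t_k \wb Z)$ and $\cJ(\wb X, t_k \wb Z)$ coincide on $X$. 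Moreover the former is globally contained in the latter, because $\cJ(\wb X, t_k \wb Z)$ is by construction the maximal element among the classical multiplier ideals $\cJ((\wb X, \Delta), t_k \wb Z)$ attached to boundaries $\Delta$ (recalled after Theorem~\ref{thm:ex comp bd}).

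Finally, set $\sA := \O_{\wb X}(nL) \tensor \cJ((\wb X, \wb\Delta_m), t_k \wb Z)$ and $\sB := \O_{\wb X}(nL) \tensor \cJ(\wb X, t_k \wb Z)$. Then $\sA \subset \sB$ are coherent sheaves on $\wb X$ whose inclusion is an isomorphism over $X$, and $\sA$ is globally generated on all of $\wb X$ by the conclusion of Step~3. For any $x \in X$ the composite $H^0(\wb X, \sA) \tensor_\C \O_{\wb X, x} \surj \sA_x \xrightarrow{\ \isom\ } \sB_x$ is surjective, and it factors through $H^0(\wb X, \sB) \tensor_\C \O_{\wb X, x} \to \sB_x$, which is therefore surjective as well; hence $\sB$ is globally generated at every point of $X$, proving the claim. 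I expect the only subtle point to be the bookkeeping around the choice of $m$ in the first step: one must pin down a single $m$ that stabilizes the limiting multiplier ideals both downstairs on $X$ and upstairs on $\wb X$, while staying in $m_0 \cdot \N$ so that the $m$-compatible boundary $\wb\Delta_m$ of Step~2 is available; after that, the argument is purely formal.
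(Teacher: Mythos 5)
Your proof is correct and follows essentially the same route as the paper: fix a sufficiently divisible $m \in m_0\cdot\N$, identify $\cJ(\wb X, t_k\wb Z)$ with $\cJ((\wb X, \wb\Delta_m), t_k\wb Z)$ over $X$ via the $m$-compatible boundary, use the global inclusion of the latter into the former, and transfer global generation along it over $X$. The only cosmetic difference is that you additionally require $\cJ_m(\wb X, t_k\wb Z)=\cJ(\wb X, t_k\wb Z)$ so as to pass through $\cJ_m$ rather than invoking locality of $\cJ$ directly, which is a harmless elaboration of the same idea.
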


\begin{proof}[Proof of Claim~\ref{clm:gg}]
For $m \in m_0 \cdot \N$ sufficiently divisible, we have $\cJ(X, t_k Z) = \cJ_m(X, t_k Z)$. Fix such an $m$. Then, since $\Delta_m$ is $m$-compatible,
\[ \cJ(\wb X, t_k \wb Z)|_X = \cJ(X, t_k Z) = \cJ((X, \Delta_m), t_k Z) = \cJ((\wb X, \wb\Delta_m), t_k \wb Z)|_X. \]
We also have
\[ \O_{\wb X}(nL) \tensor \cJ((\wb X, \wb\Delta_m), t_k \wb Z) \subset \O_{\wb X}(nL) \tensor \cJ(\wb X, t_k \wb Z) \]
by~\cite[Rem.~5.3]{dFH09}. As the left-hand side sheaf is globally generated, so is the right-hand side one wherever they agree. In particular, this is the case on the open subset $X \subset \wb X$.
\end{proof}

\subsubsection*{Step 4: End of proof}

Consider the descending chain
\[ \cJ(\wb X, t_1 \wb Z) \supset \cJ(\wb X, t_2 \wb Z) \supset \cdots. \]
By Claim~\ref{clm:gg} and Lemma~\ref{lem:desc chain}, the restriction of this chain to $X$ stabilizes. This restriction is nothing but
\[ \cJ(X, t_1 Z) \supset \cJ(X, t_2 Z) \supset \cdots. \]
However, by the definition of jumping numbers we have $\cJ(X, t_k Z) \supsetneq \cJ(X, t_{k+1} Z)$ for all $k$. This is a contradiction, showing that $\Xi(X, Z)$ is discrete and thus finishing the proof. \qed

\section{Generic numerical \Q-factoriality}

In this section, we prove Theorem~\ref{thm:gen num Q-fact} from the introduction:

\begin{thm}[Generic numerical \Q-factoriality] \label{thm:gen num Q-fact'}
Let $X$ be a normal variety. Then there is a closed subset $W \subset X$ of codimension at least three such that $X \minus W$ is numerically \Q-factorial.
\end{thm}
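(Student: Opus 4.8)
The plan is to reduce the statement to a claim about the finitely generated group $\Cl(X)$ and to control, for each divisor class, the locus where that class fails to be numerically $\Q$-Cartier. First I would observe that it suffices to treat individual $\Q$-divisors: if $D_1, \dots, D_r$ are generators of $\Cl(X)_\Q$ (a finite-dimensional $\Q$-vector space, since $\Cl(X)$ is finitely generated for $X$ a variety of finite type over $\C$ — or one can work on a suitable affine/quasi-projective chart), then a finite union of codimension-$\ge 3$ closed subsets is again codimension $\ge 3$, so I may fix one $\Q$-divisor $D$ and produce $W_D$ with $X \minus W_D$ making $D$ numerically $\Q$-Cartier. Being numerically $\Q$-factorial is an open-type condition that can be checked after an affine cover, so there is no loss in reducing to $X$ quasi-projective, and by the usual trick (close up and normalize) I may even work with a projective compactification when convenient.

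Next I would use Proposition~\ref{prp:ses} together with the sheaf-theoretic resolution technology of Section~\ref{sec:sheaf resol}. Fix $D$; replacing $D$ by a multiple I may assume the non-$\Q$-Cartier locus of $D$ has been isolated. Take the minimal resolution $f\!: Y \to X$ of the rank-one sheaf $\O_X(D)$ in the sense of Definition~\ref{dfn:resol sheaf}; by Theorem~\ref{thm:exist resol}, $\factor{f^*\O_X(D)}{\tors} = \O_Y(B)$ for an $f$-ample invertible sheaf, and $f$ is an isomorphism over the $\Q$-Cartier locus of $D$. Now by Proposition~\ref{prp:num Q-Cartier}(\ref{itm:num Q.1}), to show $D$ is numerically $\Q$-Cartier over an open $V \subset X$ it is enough to produce, on some $\Q$-factorial modification of $V$, an $f$-numerically trivial $\Q$-divisor pushing forward to $D|_V$. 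The point of Proposition~\ref{prp:ses} is that over a point $x$, the class $[D_x] \in \Cln(X)_\Q$ vanishes precisely when $f^*D$ (built from the $E_i$) can be corrected to an $f$-numerically trivial divisor; the obstruction lives in a space of dimension bounded by $\rho(Y/X)$ minus the number of exceptional divisors lying over $x$.

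The mechanism for killing the obstruction in codimension $\ge 3$ is the following. The exceptional locus of $f$ is a closed subset $E \subset X$; over a general point of each irreducible component $E^{(j)}$ of $E$ of codimension $\le 2$, the fibre geometry of $f$ is "constant" and one can check, using that $Y$ is $\Q$-factorial and $B$ is $f$-ample, that the relevant $f$-numerical equivalence class of the needed divisor does extend — concretely, over the generic point of a codimension-$2$ component the fibre is a (possibly reducible) curve, and on a surface-like slice every divisor is numerically $\Q$-Cartier by Example~\ref{exm:surfaces} / Mumford pullback, which lets one produce the correction; over a codimension-$1$ component the divisor $D$ is already Cartier in codimension one since $X$ is normal, so nothing is needed there. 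Hence the only points where the construction can fail are those lying in the codimension-$\ge 3$ part of $E$, together with the non-isolated part of the non-$\Q$-Cartier locus — but after the initial reduction that locus is contained in $E$ anyway. Collecting, one sets $W := W_D$ to be the union over all generators $D$ of the codimension-$\ge 3$ strata of the corresponding exceptional loci, and then $X \minus W$ is numerically $\Q$-factorial.

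The main obstacle I expect is the codimension-$2$ case: showing that along a codimension-$2$ component of the exceptional locus the $f$-numerically trivial correcting divisor really exists. This is where one needs the surface case (numerical $\Q$-factoriality of normal surfaces, Example~\ref{exm:surfaces}) applied to a general two-dimensional transverse slice, together with a semicontinuity argument ensuring that a relation found over the generic point of the slice-family spreads out to an open neighbourhood. One must be careful that "numerically trivial over the slice" upgrades to "$f$-numerically trivial over an open subset of $X$", which should follow from the compatibility of the pullback $f^*$ with restriction to general hyperplane sections and from flatness/properness of $f$ over the locus in question. Everything else — the reduction to finitely many divisor classes, the reduction to the quasi-projective (or projective) case, and the bookkeeping of which strata land in codimension $\ge 3$ — is routine once the codimension-$2$ extension is in hand.
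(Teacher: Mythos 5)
Your proposal takes a genuinely different route from the paper, but it has one factual error and leaves the hardest step only sketched.

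The factual error: you assert that $\Cl(X)$ is finitely generated for a variety of finite type over $\C$. This is false, and the paper's own Example~\ref{exm:cones} gives a counterexample: for an affine cone $X = C_a(Y, L)$ one has $\Cl(X) \isom \Pic(Y)/\Z L$, which contains $\Pic^0(Y)$ and is thus very far from finitely generated whenever $Y$ has positive irregularity. Passing to an affine or quasi-projective chart does not help. The reduction to finitely many divisor classes can still be salvaged, but the correct object to use is $\Cln(X)_\Q$, which \emph{is} finite-dimensional: this follows immediately from the short exact sequence of Proposition~\ref{prp:ses}, since $N^1(\wt X/X)_\Q$ is finite-dimensional. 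If you choose representatives $D_1,\dots,D_r$ of a basis of $\Cln(X)_\Q$ and find $W_{D_i}$ for each, the union argument you describe does go through (any divisor on $X\minus W$ extends to $X$, lies in $\sum \Q[D_i]$ modulo numerically $\Q$-Cartier divisors, and numerical $\Q$-Cartierness is preserved under restriction to open subsets).

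The more serious gap is the codimension-two step, which you yourself flag as ``the main obstacle.'' Your plan --- take the minimal sheaf resolution of $\O_X(D)$, slice by general surfaces through codimension-two points, invoke Mumford's pullback on the slices (Example~\ref{exm:surfaces}), and then run a semicontinuity/spreading-out argument to upgrade ``numerically trivial on the slice'' to ``$f$-numerically trivial in a neighborhood'' --- is plausible in spirit but is precisely where all the content lies, and you have not carried it out. There are real subtleties: the sheaf resolution $Y$ need not be $\Q$-factorial, so one must pass to a further modification; and it is not automatic that the numerical relation found on a generic transverse slice is the restriction of a global $f$-numerical relation (numerical equivalence on a surface slice a priori sees only curves lying inside the slice). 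The paper avoids this entirely by a more global argument: it takes a log resolution $f\!:\wt X\to X$, throws away a codimension-$\ge 3$ set so that each $f|_{E_i}\!: E_i\to f(E_i)$ over a codimension-two image is a smooth morphism onto a smooth base, and then uses Stein factorization to prove (Claim~\ref{clm:fibre}) that every contracted curve over a codimension-two stratum represents one of finitely many fixed classes $\gamma_1,\dots,\gamma_\ell\in N_1(\wt X/X)_\Q$. This pins down $\rho(\wt X/X)=\ell$, the number of remaining exceptional divisors, and then Proposition~\ref{prp:ses} kills $\Cln(X)_\Q$ by a pure dimension count --- no reduction to individual classes, no sheaf resolution, and no slicing or semicontinuity needed. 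I would recommend switching to that strategy rather than attempting to make the slicing argument rigorous.
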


\begin{proof}
Let $f\!: \wt X \to X$ be a log resolution of $X$, with exceptional locus $E = E_1 + \cdots + E_k$. Re-indexing, we may assume that for some number $\ell$, we have $\codim_X f(E_i) = 2$ for $1 \le i \le \ell$, while $\codim_X f(E_i) \ge 3$ for $\ell < i \le k$.
We may remove the closed set $\bigcup_{\ell < i \le k} f(E_i)$ from $X$, since it has codimension at least three. Furthermore, by generic smoothness~\cite[Ch.~III, Cor.~10.7]{Har77}, for each $1 \le i \le \ell$ the morphism $f_i := f|_{E_i}\!: E_i \to f(E_i)$ is smooth over a smooth dense open subset $V_i$ of $f(E_i)$. The union $\bigcup_{i=1}^\ell f(E_i) \minus V_i$ is again closed of codimension at least three in $X$, hence we may remove it. Put together, this yields the following additional assumption.

\begin{awlog}
The exceptional locus of $f$ is $E = E_1 + \cdots + E_\ell$.
For any $1 \le i \le \ell$, we have that $f(E_i)$ is smooth of codimension exactly two in $X$ and that the morphism $f_i\!: E_i \to f(E_i)$ is smooth.
\end{awlog}

We then have the following claim.

\begin{clm} \label{clm:fibre}
For any index $1 \le i \le \ell$, there is a natural number $n_i$ and a numerical class $\gamma_i \in N_1(\wt X / X)_\Q$ such that the following holds.
For any $x \in f(E_i)$, we have that $f_i^{-1}(x) = C_i^{(1)} \cup \cdots \cup C_i^{(n_i)}$ is a disjoint union of smooth curves, all of which are numerically equivalent to $\gamma_i$, i.e.~we have $[C_i^{(j)}] = \gamma_i \in N_1(\wt X / X)_\Q$ for all $1 \le j \le n_i$.
\end{clm}

\begin{proof}[Proof of Claim~\ref{clm:fibre}]
Fix an index $i$. The morphism $f_i\!: E_i \to f(E_i)$ is smooth of relative dimension one. Let
\[ \xymatrix{
E_i \ar[r]^{g_i} & B_i \ar[r]^{h_i} & f(E_i)
} \]
be its Stein factorization. Then $g_i$ is smooth of relative dimension one with connected fibres, and $h_i$ is finite \'etale, say of degree $n_i$.

It is then clear that for any $x \in f(E_i)$, the fibre $f_i^{-1}(x)$ is a disjoint union of $n_i$ many smooth curves $C_i^{(j)}$. Each of these curves is a (scheme-theoretic) fibre of $g_i$, hence they all represent the same class $\Gamma_i \in N_1(E_i / B_i)_\Q = N_1(E_i / f(E_i))_\Q$, independent of the point $x$. If $\gamma_i$ is the image of $\Gamma_i$ under the natural map $N_1(E_i / f(E_i))_\Q \to N_1(\wt X / X)_\Q$, then $[C_i^{(j)}] = \gamma_i$ for all indices $j$.
\end{proof}

Claim~\ref{clm:fibre} implies that $N_1(\wt X / X)_\Q$ is spanned by $\gamma_1, \dots, \gamma_\ell$. In particular, the relative Picard number of $f$ is $\rho(\wt X / X) = \ell$. Hence the dimension of $N^1(\wt X / X)_\Q$ is likewise $\ell$. We now make use of the short exact sequence of \Q-vector spaces
\[ 0 \lto \bigoplus_{i=1}^\ell \Q \cdot [E_i] \lto N^1(\wt X / X)_\Q \lto \Cln(X)_\Q \lto 0 \]
given by Proposition~\ref{prp:ses}. By what we have just proved, the map on the left is an isomorphism for dimension reasons, which yields that $\Cln(X)_\Q = 0$. This means that $X$ is numerically \Q-factorial.
\end{proof}

\section{Discreteness in dimension three}

Recall that Theorem~\ref{thm:discrete 3} states the following:

\begin{thm}[Discreteness in dimension three] \label{thm:discrete 3'}
Let $(X, Z)$ be a pair, where $X$ is a normal threefold whose locus of non-rational singularities is zero-dimensional. Then $\Xi(X, Z)$ is a discrete subset of $\R$.
\end{thm}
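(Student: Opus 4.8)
The plan is to reduce Theorem~\ref{thm:discrete 3'} to Theorem~\ref{thm:discrete isol'} by exhibiting the locus where $K_X$ fails to be numerically \Q-Cartier as a proper subset of the locus where $K_X$ fails to be \Q-Cartier, and then showing the latter is zero-dimensional under our hypothesis. First I would observe that, by Theorem~\ref{thm:ratl} (and the subsequent remark, which only needs $1$-rational singularities), at any point $x \in X$ where $X$ has rational singularities, every numerically \Q-Cartier divisor is already \Q-Cartier. Conversely, it is classical that a normal variety has rational singularities in codimension at most two automatically in low dimensions — more precisely, on a threefold the non-rational locus has dimension at most one, and in our hypothesis we have assumed it is in fact zero-dimensional. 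So let $W_{\mathrm{nr}} \subset X$ be the (zero-dimensional, i.e.~finite) locus of non-rational singularities.

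The key step is to control the non-\Q-Cartier locus of $K_X$. On $X \minus W_{\mathrm{nr}}$, the variety $X$ has rational singularities, so by Theorem~\ref{thm:ratl} the notions of \Q-Cartier and numerically \Q-Cartier coincide there. By Theorem~\ref{thm:gen num Q-fact'}, applied to $X$, there is a closed subset $W_0 \subset X$ of codimension at least three — hence zero-dimensional, since $\dim X = 3$ — such that $X \minus W_0$ is numerically \Q-factorial; in particular $K_{X}$ is numerically \Q-Cartier on $X \minus W_0$. Now set $W := W_{\mathrm{nr}} \cup W_0$, a finite set. On $X \minus W$, the divisor $K_X$ is numerically \Q-Cartier (being outside $W_0$) and $X$ has rational singularities (being outside $W_{\mathrm{nr}}$), so by Theorem~\ref{thm:ratl} $K_X$ is genuinely \Q-Cartier on $X \minus W$. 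Therefore the non-\Q-Cartier locus of $K_X$ is contained in $W$, hence is zero-dimensional.

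Once the non-\Q-Cartier locus of $K_X$ is known to be zero-dimensional, Theorem~\ref{thm:discrete isol'} applies verbatim to the pair $(X, Z)$ and yields that $\Xi(X, Z)$ is discrete in $\R$. This completes the proof. The main obstacle I anticipate is purely bookkeeping: making sure that ``non-rational locus zero-dimensional on a threefold'' genuinely forces the non-\Q-Cartier locus of $K_X$ to be zero-dimensional, which hinges on the fact (via Theorem~\ref{thm:ratl} and the remark after it, together with Theorem~\ref{thm:gen num Q-fact'}) that the only obstructions to $K_X$ being \Q-Cartier in codimension $\le 2$ come from non-rational, in fact non-$1$-rational, singularities — and these we have assumed away outside a finite set. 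No serious analytic input is needed beyond what has already been proved; the work is in assembling Theorems~\ref{thm:ratl}, \ref{thm:gen num Q-fact'}, and \ref{thm:discrete isol'} in the right order.
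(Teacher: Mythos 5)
Your argument is correct and follows the same route as the paper: invoke Theorem~\ref{thm:gen num Q-fact'} to get numerical \Q-factoriality outside a finite set, combine with the hypothesis on rational singularities, apply Theorem~\ref{thm:ratl} to upgrade to genuine \Q-Cartierness of $K_X$ outside a finite set, and conclude by Theorem~\ref{thm:discrete isol'}. The paper's proof is the same chain of implications, just stated more compactly.
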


\begin{proof}[Proof of Theorem~\ref{thm:discrete 3'}]
By Theorem~\ref{thm:gen num Q-fact} and the assumption, there is a finite subset $W \subset X$ such that $X \minus W$ is numerically \Q-factorial and has rational singularities. It then follows from Theorem~\ref{thm:ratl} that $X \minus W$ is \Q-factorial, i.e.~every Weil divisor on $X$ is \Q-Cartier outside a finite set of points. In particular, the non-\Q-Cartier locus of $K_X$ is zero-dimensional. Now Theorem~\ref{thm:discrete isol} applies to show that $\Xi(X, Z)$ is discrete.
\end{proof}

\section{Proof of Corollaries}

\subsection*{Proof of Corollary~\ref{cor:gen discrete}}


By Theorem~\ref{thm:gen num Q-fact'}, there is an open subset $U \subset X$ such that $X \minus U$ has codimension at least three in $X$, and $U$ is numerically \Q-factorial. In particular, $K_U$ is numerically \Q-Cartier. Note that $U$ does not depend on $Z$. Let $\pi\!: \wt U \to U$ be a log resolution of $Z|_U \subset U$, so that $\cJ_{Z|_U} \cdot \O_{\wt U} = \O_{\wt U}(-D)$ for some effective Cartier divisor $D$ on $\wt U$. We may write
\[ K_{\wt U/U}^- = \sum_{i=1}^k a_i E_i
\quad \text{and} \quad
D = \sum_{i=1}^k r_i E_i \]
for some $k \in \N$ and suitable $a_i \in \Q$, $r_i \in \N_0$ and prime divisors $E_i$ on $\wt U$.

By Theorem~\ref{thm:mult ideals num Q-Gor}, for any $t > 0$ we have
\[ \cJ(U, tZ|_U) = \pi_* \O_{\wt U} \left( \rp K_{\wt U/U}^- - tD. \right)
 = \pi_* \O_{\wt U} \left( \sum_{i=1}^k \rp a_i - t \cdot r_i. E_i \right). \]
This implies that if $t$ is a jumping number of $(U, Z|_U)$, then $a_i - t \cdot r_i$ is a negative integer for some index $1 \le i \le k$ such that $r_i \ne 0$. Hence
\[ \Xi(U, Z|_U) \subset \left\{ \frac{ a_i + m }{ r_i } \;\;\middle|\;\; \text{$\exists \; 1 \le i \le k$ such that $r_i \ne 0$, and $m \in \N$} \right\}. \]
The set on the right-hand side is clearly discrete and consists of rational numbers only. So $\Xi(U, Z|_U)$ enjoys the same properties. This ends the proof.

\subsection*{Proof of Corollary~\ref{cor:gen test}}

Immediate from Theorem~\ref{thm:gen num Q-fact} and~\cite[Thm.~1]{dFDTT14}.



\providecommand{\bysame}{\leavevmode\hbox to3em{\hrulefill}\thinspace}
\providecommand{\MR}{\relax\ifhmode\unskip\space\fi MR}
\providecommand{\MRhref}[2]{%
  \href{http://www.ams.org/mathscinet-getitem?mr=#1}{#2}
}
\providecommand{\href}[2]{#2}

\end{document}